\documentclass[11pt,reqno]{amsart}%
\usepackage[latin1]{inputenc}
\usepackage{amsfonts,amssymb,latexsym}
\usepackage{amsmath,amsthm}
\usepackage{enumerate}
\usepackage[dvips]{color,graphicx}
\usepackage{amsbsy}
\usepackage{amsfonts}
\usepackage{graphicx}
\usepackage{color}
\usepackage{amsmath}
\usepackage{amssymb}%
\usepackage{amsthm}
\usepackage[dvips]{graphicx}
\usepackage{graphicx} 
\usepackage{subfigure}
\usepackage{pst-eucl}
\usepackage[english]{babel}
\usepackage{amsmath,amssymb,graphics,mathrsfs}
\usepackage{amsmath,amssymb,latexsym}
\usepackage{graphicx,color}
\usepackage[T1]{fontenc}
\usepackage[active]{srcltx}
\usepackage{multicol}
\usepackage[latin1]{inputenc}
\usepackage{pst-all}
\usepackage{enumerate}
\usepackage{pstricks}
\usepackage{pstricks-add}
\usepackage{setspace}
\usepackage{soul}
\usepackage{cancel}
\usepackage{nonfloat}
\usepackage[left=3.8cm,top=2.8cm,right=2.2cm,bottom=3.1cm]{geometry}

\usepackage[colorlinks=true,citecolor=red,linkcolor=blue,urlcolor=RubineRed,pdfpagetransition=Blinds,pdftoolbar=false,pdfmenubar=false]{hyperref}

\newtheorem{theorem}{Theorem}[section]
\newtheorem{definition}[theorem]{Definition}
\newtheorem{proposition}[theorem]{Proposition}
\newtheorem{corollary}[theorem]{Corollary}
\newtheorem{lemma}[theorem]{Lemma}
\newtheorem{remark}{Remark}[section]

\numberwithin{equation}{section}

\begin{document}
\title[ ]{Optimal bilinear control restricted to the three-dimensional chemo-repulsion model with potential production}
\author[F. Guill\'en-Gonz\'alez]{Francisco Guill\'en-Gonz\'alez}
\address[F. Guill\'en-Gonz\'alez]{Universidad de Sevilla, Dpto. de Ecuaciones Diferenciales y An\'alisis Num\'erico and IMUS, C/Tarfia, S/N, 41012, Sevilla, Spain.}\email{guillen@us.es}
\author[E. Mallea-Zepeda]{Exequiel Mallea-Zepeda}
\address[E. Mallea-Zepeda]{Universidad  Cat\'olica del Norte, Departamento de
Matem\'{a}ticas, Av. Angamos, 0610, Antofagasta, Chile.} \email{emallea@ucn.cl}
\author[M. A. Rodr\'iguez-Bellido]{Mar\'ia A. Rodr\'iguez-Bellido }
\address[M.A. Rodr\'iguez-Bellido]{Universidad de Sevilla, Dpto. de Ecuaciones Diferenciales y An\'alisis Num\'erico and IMUS, C/Tarfia, S/N, 41012, Sevilla, Spain.}\email{angeles@us.es}
\author[E.J. Villamizar-Roa]{\'Elder J. Villamizar-Roa}
\address[E.J. Villamizar-Roa]{Universidad Industrial de Santander, Escuela de
Matem\'{a}ticas, A.A. 678, Bucaramanga, Colombia.} \email{jvillami@uis.edu.co}
\date{\today}

\keywords{Chemo-repulsion, nonlinear production, weak and strong solutions, optimal control.} \subjclass[2020]{35A01; 35Q92; 49J20; 49K20; 93C10}

\begin{abstract}
In this paper we study the following three-dimensional parabolic-parabolic chemo-repulsion model with potential production, logistic reaction and bilinear control, defined in $Q=(0,T)\times\Omega$:
\begin{equation*}\label{eq0}
\left\{
\begin{array}{rcl}
\partial_tu-\Delta u&=&\nabla\cdot(u\nabla v)+r\,u-\mu\, u^p,\\
\partial_tv-\Delta v+v&=&u^p+f\,v\, 1_{\Omega_c},
\end{array}
\right.
\end{equation*}
where $1< p<+\infty$, $r,\mu\geq 0$, and $f=f(t,x)$ is the control function acting  on a  subdomain $(0,T)\times \Omega_c $,  with $\Omega_c\subseteq\Omega$. 
This system is endowed with initial and non-flux boundary conditions. 
We prove the existence of global weak solutions of this controlled problem when $f\in L^{5/2}(0,T;L^{5/2}(\Omega_c))$, 
analyzing the role  of the diffusion and the logistic terms to get  energy estimates. In particular,  the logistic competition term $\mu \, u^p$ is  necessary only for $p>5/3$.
Secondly, if $f\in L^{5/2}(0,T;L^{5/2+}(\Omega_c))$, any weak solution $(u,v)$ satisfying the regularity criterion  $u\in L^{5p/2}(Q)\cap L^{10/3}(Q)$ is in fact more regular, arriving in particular to $u,\nabla v\in L^5(Q)$ for $p\le 2$ and $u,\nabla v\in L^{5(p-1)}(Q)$ for $p> 2$
which is the critical regularity to solve a related optimal bilinear control problem.
 In fact, this setting let us to prove the existence of global optimal solutions, and  the differentiability of the control-to-state mapping via the Implicit Function Theorem in Banach spaces. Then, we can identify the gradient of the (reduced) cost with respect to the control solving the adjoint problem
 by duality.  
 In particular, we derive  first-order necessary optimality conditions for local optimal solutions. 
\end{abstract}
\maketitle

\section{Introduction}
Chemotaxis involves the  directional movement of cells or living organisms influenced by the concentration of the chemical signal substance. This phenomenon plays an essential role in several biological processes such as wound healing, cancer invasion, and avoidance of predators \cite{Isenbach}. 
This motion can be towards a higher (attractive) or lower (repulsive) concentration of the chemical stimuli. In this paper we are interested in the repulsive chemotaxis scenario, in which the presence
of living organisms (cells for instance) produce chemical substance, including or not a logistic growth of organisms. Explicitly, considering $Q:=(0,T)\times\Omega,$ with $\Omega\subset\mathbb{R}^3$ being a bounded domain and $(0,T),$ $T>0,$ a time interval, we consider the following PDE systems in $Q$:\\
{\it Non-logistic system:}
\begin{equation}\label{eq1}
\left\{
\begin{array}{rcl}
\partial_tu-\Delta u&=&\nabla\cdot(u\nabla v),\\
\partial_tv-\Delta v+v&=&u^p+fv\, 1_{\Omega_c}.
\end{array}
\right.
\end{equation}
{\it Logistic system:}
 \begin{equation}\label{eq1b}
\left\{
\begin{array}{rcl}
\partial_tu-\Delta u&=&\nabla\cdot(u\nabla v)+ru-\mu u^p,\\
\partial_tv-\Delta v+v&=&u^p+fv\, 1_{\Omega_c}.
\end{array}
\right.
\end{equation}
In (\ref{eq1}) and (\ref{eq1b}), the unknowns are $u(t,x)\ge0$ and  $v(t,x)\ge 0$ denoting the cell density
 and the chemical concentration,
 respectively.  The second order nonlinear term $\nabla\cdot(u\nabla v)$ 
 describes the repulsion mechanism. The opposite phenomenon, that is, the attractive chemotaxis, corresponds to the case of $-\nabla\cdot(u\nabla v)$ (see the surveys \cite{Bellomo,Hillen,Lankeit}). 
On the other hand, the term $u^p,$ $p>1,$ in the $v$-equation, corresponds to the nonlinear production of chemical signal by cells, and $fv\, 1_{\Omega_c}$ is a reaction term modelling a bilinear control where the control $f$ acts as a proliferation or degradation coefficient of the chemical substance, on a localized control region $\Omega_c\subseteq \Omega$ (hereafter $1_{\Omega_c}$ denotes the characteristic function on $\Omega_c$). This bilinear control allows to maintain nonnegativity of state variables $u$ and $v$ without restrictions on the sign of the control $f$.  
 Thus, the sign of controls can be  seen as the injection or extraction of chemical substances. Finally, in (\ref{eq1b}), $ru-\mu u^p,$ $r,\mu>0,$  
represents a proliferation term following a logistic regime.
 Systems (\ref{eq1}), (\ref{eq1b}) are completed with the initial and non-flux boundary conditions:
 \begin{equation}\label{eq2}
\left\{
\begin{array}{rcl}
u(0)=u_0\ge0,\ v(0)=v_0\ge 0\ \mbox{ in }\ \Omega,\\
\partial_{\bf n} u=0,\ \partial_{\bf n} v=0\ \mbox{ on }\ (0,T)\times\partial\Omega.
\end{array}
\right.
\end{equation}
 Hereafter, we denote $\partial_{\bf n} w=\nabla w\cdot {\bf n}$ where ${\bf n}$ is the outward unit normal vector on $\partial\Omega$.\\

With respect to uncontrolled problems ($f \equiv 0$), although the chemorepulsion system seems to be more favorable to obtain regularity properties  than the chemoatractive  one, 
 it is still far from being completely understood if there exists blow-up or not for three-dimensional domains. In fact, 
 the existence and uniqueness of classical global-in-time solutions in bi-dimensional (or one-dimensional) bounded domains for the problem (\ref{eq1}) without control ($f \equiv 0$) and linear production ($p=1$) was proved in   \cite{cieslak}, obtaining also  
 the existence of global weak solutions for 3D  domains. In addition,
 the existence of global weak solutions for 
 (\ref{eq1}) in 3D bounded domains  and $f \equiv 0$ was addressed either  for quadratic production ($p=2$) in \cite{Diego1}, or for the subquadratic case $p\in(1,2)$ in \cite{Diego3}.
 However, for any $p\ge 1$,  it is unclear whether regular bounded solutions exist globally in time. Assuming more favorable conditions, either superlinear diffusion \cite{Freitag} or  sufficiently weak chemotactic sensitivity  \cite{tao}, 
  the existence of  regular global in time and bounded solutions holds.
 \\ 

With respect to the controlled problems ($f \not\equiv 0$) in 2D domains, the  existence and uniqueness of global strong solutions for problem (\ref{eq1})  
 is proved in \cite{Exequiel2D} for  $p=1$ and 
for subquadratic production $1<p\leq 2$ in \cite{EFE2D}. 
More recently, the existence of global weak solutions for problem (\ref{eq1b}) with linear production ($p=1$)  and quadratic logistic growth $r u-\mu u^2$ was proved in \cite{Guillen4}.
On the other hand, with respect to the controlled problem (\ref{eq1})  in 3D domains and linear production $p=1$,  the existence of global weak solutions   and  a regularity criterion  allowing to obtain unique strong solution was established in  \cite{Exequiel3D}.
 
 \
 
  The first motivation of this work is to understand  the effect of the nonlinear production in the  problems with bilinear control,  in 3D domains.
In this sense, we will prove the existence of global weak solutions for the no-logistic problem  (\ref{eq1}), provided $1\leq p\leq 5/3$. For the complementary case $p>5/3$, it is necessary to add the logistic term $r u-\mu u^p$ in order to control  the energy associated to the system, and deduce the existence of weak solutions of (\ref{eq1b}). 
  Once the existence of global weak solutions is known, 
 we establish a {\it Serrin-type} regularity criterion for both systems (\ref{eq1}) and  (\ref{eq1b}),  
 which makes weak solutions 
  become strong solutions. Indeed, we prove that if $f\in L^{5/2}(0,T;L^{5/2+}(\Omega_c))$ 
  ($L^{5/2+}$ means $L^{5/2+\delta}$ for $\delta$ small enough), then any weak solution $(u,v)$ satisfying the regularity criterion $u\in L^{5p/2}(Q)\cap L^{10/3}(Q)$ is more regular and unique. Note that we have chosen the competition part with the same power $p$ as in the production term, because this is the least regular case allowing the existence of solutions, 
and consequently, the case of greatest mathematical interest; however, our results could be adapted for other cases where the power in the logistic term is greater than the power in the production. \\

Our interest in the existence 
and regularity of  solutions of (\ref{eq1}) or (\ref{eq1b})
is strongly motivated by our intention of analyzing  optimal control problems constrained
 to  strong solutions of these chemorepulsion systems. 
In  bidimensional bounded domains,  optimal control problems for (\ref{eq1}) have been analyzed, either for linear production ($p=1$) in \cite{Exequiel2D} or for subquadratic production ($p\in(1,2]$)  in \cite{EFE2D}. 
Even in 2D domains, an optimal control problem for 
 (\ref{eq1b}) with logistic term $ru-\mu u^2$ and linear production ($p=1$) was analyzed in \cite{Guillen4} (see also \cite{Hernandez} for an optimal control problem subjected to a Lotka-Volterra competition model with chemorepulsion). 
In the context of 3D-bounded domains, up our knowledge the only result dealing with a control problem is obtained in \cite{Exequiel3D} subjected to the problem (\ref{eq1}) with linear production ($p=1$). Therefore there is a gap in the case of the 3D optimal control problem constrained
to chemorepulsion models (\ref{eq1}) and  (\ref{eq1b}) for nonlinear production $p>1.$ \\

Based on the above, our second motivation is to study optimal control problems with state equations given by (\ref{eq1}) or (\ref{eq1b}) and $p>1$. In fact, we prove the existence of global optimal solutions via a minimizing sequence argument. 
Then,  first-order necessary optimality conditions for local optimal solutions are obtained,  where, instead of using a generic Lagrange multipliers argument as addressed in \cite{Guillen4,Andre_SICOM,Exequiel2D,Exequiel3D,EFE2D, Hernandez,linares, Lorca, rodriguez_rueda}, 
we compute the derivative of the state with respect to the control via  
the Implicit Function Theorem.
Indeed, 
we prove that the control-to-state operator is locally well-defined around a sufficiently regular solution, and it is Fr\'echet differentiable. 
With this in hand, 
we can compute the derivative of the reduced cost (defined only with respect to the control)
by using the adjoint problem to identify this derivative in an explicit form. Moreover, 
 the well-posedness of the adjoint system is deduced by a duality argument. 
As consequence, we directly deduce  the first-order necessary optimality conditions.
Notice that, the explicit calculus of the derivative of the reduced cost  functional could be  an appropriate strategy to propose numerical schemes 
 based on gradient descent methods.\\
 
 
 We summarize the main contributions of the paper as follow: 
 \begin{itemize}
 \item We prove the existence of global-in-time weak solutions for a three-dimensional highly nonlinear chemorepulsion system analyzing carefully the loss of regularity produced by the
nonlinear production, and its compensation, at least when $1\leq  p \leq 5/3$, by the diffusion.  In fact, we  prove existence
of global weak solutions for the non-logistic problem (\ref{eq1}) provided $1\leq  p \leq 5/3,$ while for the
complementary case $p > 5/3$  is necessary to add the logistic term. The cornerstone of this proof is to arrive at the  energy inequalities (\ref{energy0}) and (\ref{ps-120}) for non-logistic and logistic cases, respectively.
 \item We establish a regularity criterion which makes weak solutions become more regular. The regularity criterion is obtained through a not standard bootstrapping argument based on a linear diffusion-convection-reaction equation with non-regular coefficients for the convection and reaction terms. In particular, regularity $u\in L^{10/3}(Q)$ will be required to prove, by duality,  uniqueness  between any weak solution and the weak solution satisfying the regularity  criterion.
\item The regularity criterion is established with the minimum regularity of the control $f\in L^{5/2}(L^{5/2+}(\Omega_c))$ and the state variables $(u,v)$, defined in suitable Banach spaces $X_u$ and $Y_u$  (see Subsection~\ref{section2.3}), 
 that implies the well-posedness of the linearized system (see Lemma~\ref{corol-A}).
 \item We  can identify the gradient of the (reduced) cost functional with respect to the control, owing to the
differentiability of the control-to-state operator (via the Implicit Function Theorem) and the adjoint problem (solved by a duality argument). This fact let us to derive first-order necessary optimality conditions for local optimal solutions. On the other hand,  the identification of the gradient can be an appropriate strategy to propose numerical schemes based on gradient descent methods, which have not been  explored properly in the context of chemotaxis problems.
 \end{itemize}

The layout of this paper is as follows: In Section 2, we present the main contributions of this paper.
 In Section 3, we first  discuss some formal estimates in order to illustrate the difficulties for obtaining an energy inequality, highlighting the role of the nonlinear production. After, we prove 
 the existence of weak solutions  
in both cases, logistic and non-logistic problems; and establish
 a regularity criterion through which weak solutions become more regular solutions,
  satisfying, in particular, uniqueness. 
In Section 4, we prove the existence of optimal solutions restricted to state system (\ref{eq1}) or (\ref{eq1b}), 
and derive first-order necessary optimality conditions, analyzing the differentiability of the control-to-state operator, via the Implicit Function Theorem, and solving the adjoint problem by a duality argument. Finally, we include an appendix addressing the existence and regularity of some auxiliary linear problems, which will be applied to prove the well-posedness of the linearized system, an essential hypothesis to apply the Implicit Function Theorem.

\section{Main contributions}
\subsection{Basic notation.}
We start this section by recalling some basic notations which will be used later.
Let $\Omega\subset \mathbb{R}^3$ be a bounded Lipschitz domain with boundary $\partial\Omega$ of class $C^{2,1}$. 
 We use the notation standard for Lebesgue and Sobolev spaces. The norm in $L^s(\Omega),$ $1\leq s\leq \infty,$ $s\neq 2,$ will be denoted by $\Vert \cdot\Vert_{L^s},$ and if $s=2,$ we simply denote the $L^2$-norm by $\Vert\cdot\Vert$ and the respective inner product by $(\cdot,\cdot)$. We consider the usual Sobolev spaces $W^{m,s}(\Omega)=\{u\in L^s(\Omega)\,:\, \|\partial^\alpha u\|_{L^s}<\infty,\, \forall |\alpha|\le m\}$, with norm denoted by $\|\cdot\|_{W^{m,s}}.$ When $s=2$, we write $H^m(\Omega):=W^{m,2}(\Omega)$ and we denote the respective norm by $\|\cdot\|_{H^m}.$ For a Banach space $X$ and a final time $T>0$, the Bochner spaces $L^s(0,T;X)$ will be denoted by $L^s(X)$ and its norm by $\Vert \cdot \Vert_{L^s(X)};$ in particular, $L^s(0,T;L^s(\Omega))$ is simply denoted by $L^s(Q)$, and $L^q(0,T;L^s(\Omega))$  and $L^q(0,T;W^{m,s}(\Omega))$ are reduced to $L^q(L^s)$ and $L^q(W^{m,s})$, respectively.  Also, $C(X)$ will denote the space $C([0,T];X)$ with the uniform metric in time. 
The topological dual space of a Banach space $X$ will be denoted by $X'$, and the duality for a pair $X$ and $X'$ by $\langle\cdot,\cdot\rangle_{X'}$ or simply by $\langle\cdot,\cdot\rangle$ unless this leads to ambiguity. Moreover, some letters like $C,K,...$ will denote 
positive constants, independent of state $(u,v)$ and control $f$, but their values may change from line to line. 
We recall the following equivalent norms in $H^1(\Omega)$ and $H^2(\Omega)$
(see \cite{necas} for more details):
\begin{eqnarray}
\|u\|^2_{H^1}&\equiv& \|\nabla u\|^2+\left(\int_\Omega u\right)^2\quad \forall \,u\in H^1(\Omega),\label{norm-1}\\
\|u\|^2_{H^2}&\equiv& \|\Delta u\|^2+\left(\int_\Omega u\right)^2\quad \forall \, u\in H^2_{\bf n}(\Omega),\label{norm-2}
\end{eqnarray}
where $H^2_{\bf n}(\Omega)=\{u\in H^2(\Omega): \partial_{\bf n} u\vert_{\partial\Omega}=0\}.$ In order to analyze the existence theory, frequently we will
use the (weak) energy space
$$W_2=L ^\infty(L^2)\cap L^2(H^1),$$
as well as, the following (strong) Banach space
\begin{equation}\label{spaceX}
X_q:=\left\{u\in C([0,T];W^{2-2/q,q})\cap L^p(W^{2,q})\,:\, \partial_tu\in L^q(Q)\right\}\ (q>1).
\end{equation}
\begin{lemma}[\cite{Andre_SICOM} Lemmas 2.5 and 2.6]\label{An}  
The following continuous embeddings of the strong space $X_q$ into $L^s(L^r)$ spaces hold:
\begin{enumerate}
\item If $q\in [1,5/2),$ then $X_q\subset L^{5q/(5-2q)}(Q);$
\item If $q=5/2,$ then  $X_{5/2}\subset L^{\infty}(L^r)$ for all $r\in [1,\infty)$ (that we called $L^{\infty}(L^{\infty-})$);
\item If $q>5/2,$ then $X_q\subset L^\infty(Q);$
\item If $q\in(1,5)$ and $w\in X_q,$ then $\nabla w\in L^{5q/(5-q)}(Q)$. In particular, if  $w\in X_{5/2}$ then $\nabla w\in L^{5}(Q)$.
\end{enumerate}
\end{lemma}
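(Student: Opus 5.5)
The plan is to derive all four embeddings from the interpolation structure of $X_q$ with respect to the parabolic scaling in $Q\subset\mathbb{R}^{1+3}$, where the effective dimension is $5$. I will read the definition of $X_q$ as requiring $u\in L^q(W^{2,q})$; the exponent $p$ in \eqref{spaceX} appears to be a typo for $q$.

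First, extend functions of $X_q$ to a slightly larger time interval, or to $\mathbb{R}\times\Omega$. This can be done by reflection in time together with a cut-off, and in space through a bounded extension operator, which is available since $\partial\Omega\in C^{2,1}$. After this extension, $X_q$ identifies with the anisotropic Sobolev space $W^{1,2}_q(Q)=W^{1,q}(L^q)\cap L^q(W^{2,q})$.

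For items (1) and (4), the main step is the anisotropic Sobolev embedding. I would use the mixed-derivative (interpolation) theorem: for $\theta\in[0,1]$,
\[
W^{1,q}(L^q)\cap L^q(W^{2,q})\subset H^{\theta,q}(H^{2(1-\theta),q}).
\]
Equivalently, one can apply the Besov--Nikolskii anisotropic embedding theorem. In either form, a function with one time derivative and two space derivatives in $L^q$ lies in $L^s(Q)$ whenever $\frac1s\ge \frac1q-\frac{2}{5}$, the $5$ coming from the parabolic dimension $2\cdot1+3$. This gives $X_q\subset L^{5q/(5-2q)}(Q)$ for $q<5/2$, which is (1).

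For (4), note that $\nabla w$ has anisotropic order $(1/2,1)$: it has half a time derivative and one space derivative in $L^q$. Sobolev with parabolic dimension $5$ then gives $\frac1s=\frac1q-\frac15$, that is, $s=5q/(5-q)$, valid for $q<5$. Taking $q=5/2$ yields $\nabla w\in L^5(Q)$.

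For (2) and (3), I would use the trace space instead. By the standard Lions--Peetre trace theorem, $X_q\subset C([0,T];(L^q,W^{2,q})_{1-1/q,q})=C([0,T];B^{2-2/q}_{q,q})$, which is consistent with the definition. Now compare the smoothness $2-2/q$ with the critical value $3/q$:
\begin{itemize}
\item if $q=5/2$, then $2-2/q=6/5=3/q$, so the space Besov embedding gives $B^{6/5}_{5/2,5/2}\subset L^r$ for every $r<\infty$, hence (2);
\item if $q>5/2$, then $2-2/q>3/q$, so $B^{2-2/q}_{q,q}\subset C(\overline\Omega)$, which gives $X_q\subset L^\infty(Q)$, hence (3).
\end{itemize}
As a consistency check, item (1) could alternatively be recovered from this trace embedding into $L^\infty(L^{3q/(5-2q)})$, interpolated with $L^q(L^{3q/(3-2q)})$ (or $L^q(L^\infty)$); Hölder/Gagliardo--Nirenberg interpolation then gives exactly the exponent $5q/(5-2q)$.

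The main obstacle is the rigorous justification of the mixed-derivative/anisotropic embedding on the bounded cylinder. This hinges on the extension operator, which must preserve the $W^{1,2}_q$ norm with constants depending on $T$ and on $\Omega$. Here I would cite the classical results of Ladyzhenskaya--Solonnikov--Ural'tseva (Lemma II.3.3) or Amann's embedding theorems rather than reprove them. Once that is in place, each item reduces to exponent bookkeeping.
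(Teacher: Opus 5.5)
Your argument is correct. The paper gives no proof of this lemma; it is imported from the cited work (Lemmas 2.5--2.6 there), so there is no internal argument to match. What you propose is the standard justification behind such a citation:
\begin{itemize}
\item extension to $\mathbb{R}\times\mathbb{R}^3$;
\item the parabolic Sobolev embedding with homogeneous dimension $5$ for items (1) and (4). Taking $\theta=2/5$ in $H^{\theta,q}(H^{2(1-\theta),q})$ for $w$, and $\theta=1/5$ so that $\nabla w\in H^{1/5,q}(H^{3/5,q})$, gives exactly $5q/(5-2q)$ and $5q/(5-q)$, with the restrictions $q<5/2$ and $q<5$ coming from $\theta<1/q$;
\item the trace space $C([0,T];B^{2-2/q}_{q,q})$, comparing $2-2/q$ with $3/q$, for items (2)--(3).
\end{itemize}
Your reading of $L^p(W^{2,q})$ as $L^q(W^{2,q})$ is also the intended one.

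Two points need a little more care.

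First, the mixed-derivative theorem requires $1<q<\infty$, because it rests on $L^q$-multiplier and bounded-imaginary-power arguments that fail in $L^1$. So at the endpoint $q=1$, which item (1) includes, it is not ``equivalent'' to the anisotropic embedding route. There you must quote the limiting $p=1$ anisotropic Sobolev embedding separately, since a heat-kernel potential estimate only yields weak-$L^{5/3}$. This endpoint plays no role in the paper.

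Second, your consistency check needs adjusting. Plain H\"older interpolation between $L^\infty(L^{3q/(5-2q)})$ and $L^q(L^\infty)$ is lossy for $3/2<q<5/2$: it gives $1/s=\frac1q\cdot\frac{5-2q}{8-2q}$, which is strictly larger than $\frac1q-\frac25$. To recover the sharp exponent in that range you have to interpolate Sobolev norms rather than Lebesgue norms. For example, use Gagliardo--Nirenberg with the full $W^{2,q}$ norm, or $[W^{2-2/q,q},W^{2,q}]_\theta=W^{2-2/q+2\theta/q,q}$ followed by the spatial embedding. Either returns $1/s=1/q-2/5$.
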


\subsection{Results of existence of weak solutions and regularity}\label{2.2}
\begin{definition}[Weak solutions]\label{weak}
Let $f\in L^{5/2}(Q_c):=L^{5/2}(L^{5/2}(\Omega_c))$, 
$(u_0,v_0)\in L^{p}(\Omega)\times H^{1}(\Omega)$, with
$u_0\ge0$ and $v_0\ge 0$ a.e. in $\Omega$. 
A pair $(u,v)$ is called weak solution in $[0,T]$ of system
(\ref{eq1})-(\ref{eq2}) or  (\ref{eq1b})-(\ref{eq2}),
if $u\ge 0$ and $v\ge 0$ a.e. in $Q$,
\begin{eqnarray} \label{st-11}
&& u\in L^{\infty}(L^p)\cap L^{{5p}/{3}}(Q),\quad v\in L^\infty(H^1)\cap L^2(H^2_{\bf n}),\\
&& u\in L^{2p-1}(Q)\ \mbox{in  the logistic problem,}\label{stt1}
\end{eqnarray}
\begin{equation} \label{st-11-bis}
\nabla u\in L^{\gamma(p)}(Q);
\quad \gamma(p)=
\left\{
\begin{array}{ll}
5p/(3+p)& \hbox{when $1<p\le 2$,}\\
\noalign{\vspace{-2ex}}\\
25p/(18+5p)& \hbox{when $2<p<12/5$,} \\
\noalign{\vspace{-2ex}}\\
2 & \hbox{when $p\ge 12/5$,} 
\end{array}
\right.
\end{equation}
satisfying the variational formulation  of the $u$-equation
\begin{equation}\label{var2}
\int_0^T\langle \partial_tu,\overline{u}\rangle+\int_0^T\int_\Omega \nabla u\cdot \nabla  \overline{u}+\int_0^T\int_\Omega u\nabla v\cdot\nabla \overline{u}=\int_0^T\int_\Omega (r\,u-\mu \, u^p)\overline{u},
\end{equation}
for all $\overline{u}\in L^{5/2}(Q)
$ with $\nabla \overline{u} \in L^{10}(Q)$, 
 the $v$-equation 
  holds a.e $(t,x)\in Q$ and the initial conditions in (\ref{eq2}). 
\end{definition}
\begin{remark}
Looking at \eqref{st-11-bis}, there is a gap in the regularity of $\nabla u$ for $p\le 2$ and  $p>2$. 
\end{remark}
\begin{remark} \label{re:t-deriv} (Regularity of the time derivatives). 
From \eqref{st-11} and \eqref{st-11-bis}, one has the minimal regularity $\nabla u\in L ^{5/4}(Q)$,  
and $u\nabla v\in L ^{10/9}(Q)$, hence in particular
one has 
$$\partial_t u\in (L^{10}(W^{1,10})\cap L^{5/2}(Q))'
\quad \hbox{and} \quad
\partial_tv\in L^{5/3}(Q).
$$ 
\end{remark}
\begin{remark} (Sense of the initial and boundary conditions).
From regularity of $(u,v)$ given in \eqref{st-11} and of $(\partial_t u, \partial_t v)$ given in Remark \ref{re:t-deriv}, it holds the weak continuity
$$u \in C^0_w([0,T];L^p)\quad \hbox{and}\quad v \in C^0_w([0,T];H^1).
$$
 In particular, the initial conditions $(u(0),v(0))=(u_0,v_0)$ has sense in $L^p(\Omega)\times H^1(\Omega)$.
The boundary condition $\partial_{\bf n} u=0,$ on $(0,T)\times \partial\Omega$ jointly to the $u$-equation are satisfied in the variational sense given in \eqref{var2}, while $\partial_{\bf n} v=0$ on $(0,T)\times \partial\Omega$ is satisfied  because $v(t,\cdot)\in H^2_{\bf n}(\Omega)$ a.e. $t\in (0,T)$.
\end{remark}
\begin{theorem}[Existence of weak solutions]\label{teo1} 
Assume that  $f\in L^{5/2}(Q_c)$, $(u_0,v_0)\in L^{p}(\Omega)\times H^{1}(\Omega)$, with
$u_0\ge0$ and $v_0\ge 0$ a.e. in $\Omega.$ 
\begin{enumerate} \item If $1<p\le 5/3$, then there exists a weak solution of system (\ref{eq1}).
\item If $p>1$, then there exists a weak solution of system (\ref{eq1b}).
\end{enumerate}
\end{theorem}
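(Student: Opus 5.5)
The plan is to build approximate solutions by a regularized problem, derive a priori energy estimates uniform in the regularization parameter, and pass to the limit by compactness. For the approximation I would use a Galerkin or fixed-point scheme in which the production term $u^p$ is replaced by a truncated, globally Lipschitz version $T_k(u_+)^p$, and the control is mollified to $f_k\in L^\infty$. I would also either keep $u_0,v_0$ smooth and strictly positive, or add a small regularization $\varepsilon\Delta^2$ if needed. For fixed $k$ one gets a regular nonnegative solution by standard parabolic theory and a Leray--Schauder argument. Nonnegativity of $u$ follows by testing with $u_-$, using the divergence structure of the repulsive term. Nonnegativity of $v$ follows by testing with $v_-$: the reaction $f_kv1_{\Omega_c}$ is linear in $v$, so Gronwall applies without any sign on $f_k$.

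The core step is the energy estimate, based on a cancellation between the two equations.
\begin{itemize}
\item Test the $u$-equation with $\frac{p}{p-1}u^{p-1}$. This gives $\frac{d}{dt}\frac{1}{p-1}\int u^p+\frac{4}{p}\|\nabla u^{p/2}\|^2$, together with the cross term $\int\nabla(u^p)\cdot\nabla v$.
\item Test the $v$-equation with $-\Delta v$. This gives $\frac12\frac{d}{dt}\|\nabla v\|^2+\|\Delta v\|^2+\|\nabla v\|^2$, together with the same cross term with opposite sign plus $-\int_{\Omega_c} f v\Delta v$.
\end{itemize}
Adding the two identities, the chemotactic terms cancel. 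In the approximate problem the cancellation is only exact if $u^{p-1}$ and the truncated production are chosen compatibly; I would take the truncation $T_k$ so that this holds.

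The control term is handled by $|\int fv\Delta v|\le\|f\|_{L^{5/2}(\Omega_c)}\|v\|_{L^{10}}\|\Delta v\|$ and the interpolation $\|v\|_{L^{10}}\le C\|v\|_{H^1}^{4/5}\|v\|_{H^2}^{1/5}$. Together these give
\[
\Big|\int fv\Delta v\Big|\le \varepsilon\|v\|_{H^2}^2+C\|f\|_{L^{5/2}}^{5/2}\|v\|_{H^1}^2 .
\]
The mean of $v$ is controlled by integrating the $v$-equation, since $\frac{d}{dt}\int v+\int v=\int u^p+\int fv$ and $\int u^p$ is part of the energy. With the equivalent norms (\ref{norm-1})--(\ref{norm-2}) and Gronwall, using $\|f\|^{5/2}_{L^{5/2}}\in L^1(0,T)$, this yields the following bounds:
\begin{itemize}
\item $u\in L^\infty(L^p)$ and $u^{p/2}\in L^2(H^1)$, hence $u\in L^{5p/3}(Q)$ by interpolation;
\item $v\in L^\infty(H^1)\cap L^2(H^2)$.
\end{itemize}
In the logistic case the extra term $\frac{p}{p-1}\int(ru-\mu u^p)u^{p-1}$ contributes $-\mu\frac{p}{p-1}\int u^{2p-1}$ plus $r\int u^p$, which is harmless. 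This gives the additional bound $u\in L^{2p-1}(Q)$ in (\ref{stt1}). The mass of $u$ is conserved in the non-logistic case and grows at most exponentially in the logistic case.

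Next I would recover the bound on $\nabla u$ in (\ref{st-11-bis}). Writing $\nabla u=\frac{2}{p}u^{1-p/2}\nabla u^{p/2}$, Hölder gives the exponent $5p/(3+p)$ for $p\le2$. For $p>2$ the factor $u^{1-p/2}$ is singular at $u=0$, so I would instead test the $u$-equation with $u$ itself. This produces $\|\nabla u\|^2$ plus a term $\int u\nabla v\cdot\nabla u$ that must be absorbed using $u\in L^{5p/3}$ (or $L^{2p-1}$) and $\nabla v\in L^{10/3}(Q)$; interpolating these leads to the exponents $25p/(18+5p)$ and $2$. From these bounds the time derivatives are bounded as in Remark \ref{re:t-deriv}. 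An Aubin--Lions argument then gives strong convergence of $u_k$ in $L^s(Q)$ for $s<5p/3$ and of $\nabla v_k$ in $L^2(Q)$. This suffices to pass to the limit in $u\nabla v$, in $u^p$ (equi-integrable since bounded in $L^{5/3}$), and in $f_kv_k$ (using $f_k\to f$ in $L^{5/2}$ and $v_k\to v$ in $L^{5/3}(Q)$).

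I expect the main obstacle to be the step where the threshold $p\le5/3$ enters in the non-logistic case. The cancellation above is formally insensitive to $p$, so the restriction must appear either in making the cancellation rigorous at the approximate level or in closing the $L^2$-type estimate for $\nabla u$. For the latter, the convective term $\int u\nabla v\cdot\nabla u$ requires $u\nabla v\in L^2(Q)$, and with $u\in L^{5p/3}$, $\nabla v\in L^{10/3}$ this is where $p$ must be compared with $5/3$. I would first try to close that estimate by Hölder and Young using only the energy bounds, and check whether it closes exactly when $p\le5/3$. If it does not, I would use the logistic dissipation $\mu\int u^{2p-1}$ to close it, which is what case (2) of the theorem allows.
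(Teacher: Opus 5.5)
Your cancellation and your bound on the control term match the paper. The gap is in the step you treat as routine, controlling the mean of $v$. That is exactly where the restriction $p\le 5/3$ (and the need for the logistic term when $p>5/3$) enters, so both of your guesses about the threshold's location are wrong.

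The control term forces the full norm $\|v\|_{H^1}^2$ onto the right-hand side, so $m(t)=\int_\Omega v$ must be estimated. Multiplying $m'+m=\int_\Omega u^p+\int_{\Omega_c} fv$ by $m$ produces $m\int_\Omega u^p\le \frac12 m^2+\frac12\|u\|_{L^p}^{2p}$. Since $\int_\Omega u^p$ is itself of the size of the energy $\mathcal{E}$, this term is quadratic in $\mathcal{E}$ (of order $\mathcal{E}^{3/2}$ without Young). The differential inequality is therefore superlinear, and Gronwall gives only a local-in-time bound. The remark that $\int_\Omega u^p$ ``is part of the energy'' does not help. As a sanity check, nothing in your argument uses $p\le 5/3$, so it would prove the non-logistic result for every $p>1$, which the paper cannot do.

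The missing idea is to absorb $\|u\|_{L^p}^{2p}=\|u^{p/2}\|^4$ into the dissipation using the conserved mass. Gagliardo--Nirenberg between $H^1$ and $L^{2/p}$ gives
\[
\|u^{p/2}\|^4\le C\|\nabla (u^{p/2})\|^{12(p-1)/(3p-1)}\|u\|_{L^1}^{4p/(3p-1)}+C\|u\|_{L^1}^{2p}.
\]
\begin{itemize}
\item If $p<5/3$, the exponent $12(p-1)/(3p-1)$ is $<2$, and Young bounds $\|u\|_{L^p}^{2p}$ by a small multiple of $\|\nabla(u^{p/2})\|^2$ plus a constant depending on the mass.
\item If $p=5/3$, the exponent equals $2$, and one puts a small weight on the mean estimate.
\item If $p>5/3$, the exponent exceeds $2$. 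One then uses $\|u\|_{L^p}^{2p}\le\|u\|_{L^1}\|u\|_{L^{2p-1}}^{2p-1}$ and absorbs it with the logistic dissipation $\frac{\mu p}{p-1}\int_\Omega u^{2p-1}$ from your test.
\end{itemize}
So the logistic term is not merely harmless; it is what closes the estimate for $p>5/3$.

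Your fallback for $\nabla u$ when $p>2$ also fails as stated. Bounding $\int_\Omega u\nabla v\cdot\nabla u$ through $u\nabla v\in L^2(Q)$, with $u\in L^{5p/3}(Q)$ and $\nabla v\in L^{10/3}(Q)$, requires $p\ge 3$. Larger $p$ helps here, so this cannot be the source of an upper threshold $5/3$. Moreover, testing with $u$ can at best give $\nabla u\in L^2(Q)$, never the exponent $25p/(18+5p)$ required in (\ref{st-11-bis}) for $2<p<12/5$.

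The paper instead tests with $u^{q-1}$, $q=\min\{5p/6,2\}$, and moves the derivative onto $v$:
\[
\int_\Omega u\nabla v\cdot\nabla(u^{q-1})=-\frac{q-1}{q}\int_\Omega u^q\Delta v\le C\|u\|_{L^{2q}}^q\|\Delta v\|,
\]
which lies in $L^1(0,T)$ because $2q\le 5p/3$ and $\Delta v\in L^2(Q)$. This yields $u^{q/2}\in L^2(H^1)$. Hence $\nabla u\in L^2(Q)$ for $p\ge 12/5$, and, via $\nabla u=\frac2q u^{1-q/2}\nabla(u^{q/2})$, the exponent $25p/(18+5p)$ for $2<p<12/5$.

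Your approximation scheme is a legitimate alternative to the paper's elliptic regularization $v_\varepsilon-\varepsilon\Delta v_\varepsilon=w_\varepsilon$. With a truncated production, however, you must also modify the chemotactic flux or the test function so that the cancellation survives, and the mass-based interpolation above must then be carried out on the modified quantities.
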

The proof of Theorem \ref{teo1} is given in Subsection \ref{Se:3.1}.\\


%

Knowing the existence of global weak solutions, we establish a regularity criterion 
implying that weak solutions  
 become more regular solutions (which for simplicity we will called strong solutions). These strong solutions will give the adequate framework to study  an associated optimal control problem. In order to introduce strong solutions,  consider the following $p$-dependent coefficients
$$\alpha(p)=\max\{3,3(p-1)\},
\quad 
\beta(p)=\max\{5,5(p-1)\}$$
and the Banach space
  \begin{equation} \label{xu}
X:=\{u: \, u \in L^{\infty}(L^{\alpha(p)}) \cap L^{\beta(p)}(Q), \, \nabla u \in L^{2}(Q), \, 
\partial_t u \in L^{2}((H^{1})') \}.
\end{equation}

\begin{theorem}[Regularity criterion and existence of strong solutions]\label{strong}
Let $(u,v)$ be any weak solution of system (\ref{eq1}) or (\ref{eq1b}).
If in addition, 
$(u_0,v_0)\in L^{\alpha(p)}(\Omega)\times W^{6/5,5/2}(\Omega)$, 
  \begin{equation}\label{reg-crit}
f\in L^{5/2}(L^{5/2+}(\Omega_c)) \quad \hbox{and}\quad u\in L^{5p/2}(Q)\cap L^{10/3}(Q),
\end{equation}
then $(u,v)\in  X \times X_{5/2}$ (that we called strong solutions).  In addition, the strong solution is unique, and there exists a positive constant 
$$\widehat{K}:=\widehat{K}(T,
\|u_0\|_{L^{\alpha(p)}},
\|v_0\|_{W^{6/5,5/2}},\|f\|_{L^{5/2}(L^{5/2+}(\Omega_c))}, \|u^p\|_{L^{5/2}(Q)})$$ such that
\begin{equation}\label{bound_sol}
\|(u,v)\|_{ X\times X_{5/2}}\le \widehat{K}.
\end{equation}
\end{theorem}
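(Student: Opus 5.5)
The plan is a bootstrapping argument that alternates between the $v$-equation, viewed as a linear parabolic problem with data in $L^{5/2}(Q)$, and the $u$-equation, viewed as a linear diffusion--convection--reaction equation with given rough coefficients. Uniqueness will come last, by a duality argument.

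\textbf{Step 1: $v\in X_{5/2}$.} We write the $v$-equation as
\[
\partial_t v-\Delta v+v=g:=u^p+f v 1_{\Omega_c},
\]
with Neumann data and $v_0\in W^{6/5,5/2}$, which is exactly the trace space $W^{2-2/q,q}$ for $q=5/2$. The criterion $u\in L^{5p/2}(Q)$ gives $u^p\in L^{5/2}(Q)$. For the bilinear term, note that $v\in L^\infty(H^1)\cap L^2(H^2)\subset L^{10}(Q)$. Hence $fv\in L^{5/2}(L^{r})$ for some $r<5/2$, which is not yet enough. I will therefore iterate through the spaces $X_q$, using Lemma~\ref{An} and maximal $L^q$ parabolic regularity for the Neumann heat operator. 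Alternatively I will argue on small time intervals. If $v\in X_q$, Lemma~\ref{An} gives $v\in L^{5q/(5-2q)}(Q)$, or $L^\infty(L^{\infty-})$ at $q=5/2$. The extra $\delta$ in $f\in L^{5/2}(L^{5/2+\delta})$ is what allows $fv\in L^{5/2}(Q)$ once $v\in L^\infty(L^{s})$ with $s$ large. To close the final step without circularity, I plan a Gronwall-type estimate on subintervals where $\|f\|_{L^{5/2}(L^{5/2+\delta})}$ is small. There I use interpolation:
\[
\|fv\|_{L^{5/2}} \le \|f\|_{L^{5/2+\delta}}\|v\|_{L^{s}},
\qquad
\|v\|_{L^s}\le \epsilon\|v\|_{W^{2,5/2}}+C_\epsilon\|v\|_{L^{2}}.
\]
Absorbing the first term and patching finitely many intervals yields $v\in X_{5/2}$ with a bound depending on $\|u^p\|_{L^{5/2}(Q)}$, $\|f\|$ and $\|v_0\|$. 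Then Lemma~\ref{An}(4) gives $\nabla v\in L^5(Q)$, and Lemma~\ref{An}(2) gives $v\in L^\infty(L^{\infty-})$.

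\textbf{Step 2: $u\in X$.} We treat the $u$-equation as the linear problem
\[
\partial_t u-\Delta u-\nabla\cdot(u\nabla v)=ru-\mu u^{p-1}u.
\]
Here the convection field is $\nabla v\in L^5(Q)$ and the reaction coefficient is $u^{p-1}\in L^{5p/(2(p-1))}(Q)$. We test by $u^{\alpha-1}$, or rather by truncations of it to stay within the weak formulation. This produces $\frac{d}{dt}\|u\|_{L^\alpha}^\alpha+c\|\nabla u^{\alpha/2}\|^2$, together with the convection term
\[
(\alpha-1)\int u^{\alpha-1}\nabla v\cdot\nabla u=\frac{2(\alpha-1)}{\alpha}\int u^{\alpha/2}\nabla v\cdot \nabla u^{\alpha/2}.
\]
Since $\nabla v\in L^5(Q)$ is the scaling-critical class, this term is controlled through Hölder, the 3D Gagliardo--Nirenberg inequality for $w=u^{\alpha/2}\in L^\infty(L^2)\cap L^2(H^1)\subset L^{10/3}(Q)$, and Gronwall. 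Criticality again forces a splitting: either small-norm subintervals of $\|\nabla v\|_{L^5}$, or decomposing $\nabla v$ into a bounded part plus a small remainder. The logistic term has a good sign; in the non-logistic case it is absent. This gives $u\in L^\infty(L^{\alpha(p)})$, and parabolic embedding then gives $u\in L^{5\alpha/3}(Q)=L^{\beta(p)}(Q)$. The estimate with $\alpha=2$ gives $\nabla u\in L^2(Q)$. Finally, reading $\partial_t u$ off the equation, with $u\nabla v\in L^2$ (because $u\in L^{10/3}$ and $\nabla v\in L^5$) and $u^p$ in a suitable space, gives $\partial_t u\in L^2((H^1)')$. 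The hypothesis $u\in L^{10/3}(Q)$ is exactly what makes $u\nabla v\in L^2(Q)$. All constants depend only on the listed quantities, which gives \eqref{bound_sol}.

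\textbf{Step 3: uniqueness.} Let $(\tilde u,\tilde v)$ be any weak solution and $(u,v)$ the strong one, and set $U=\tilde u-u$, $V=\tilde v-v$. The differences satisfy a linear system with coefficients built from both solutions. I would prove $U=V=0$ by duality: solve a backward adjoint linear problem with the regular coefficients $\nabla v\in L^5$, $u\in L^{10/3}$ and $u^{p-1}$, with arbitrary smooth right-hand side, using the linear results from the appendix, and test the difference equations against it. The main obstacle is the weak-solution side, where $\tilde u$ only has the regularity of Definition~\ref{weak}. The terms $\tilde u^p-u^p$ and $U\nabla \tilde v$ have to be written as $U\cdot(\text{coefficient})$, with coefficients integrable enough to pair with the adjoint. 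I expect this pairing, together with the criticality in Steps 1--2, to be the delicate point, and that $u\in L^{10/3}(Q)$ is precisely what is needed there.
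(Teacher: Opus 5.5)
Your Step~2 has a genuine gap. You run the $L^{\alpha(p)}$ estimate directly on the given weak solution, testing by $u^{\alpha-1}$ ``or rather by truncations of it to stay within the weak formulation''. Truncations $T_k(u)=\min\{u,k\}$ do not stay within it.
\begin{itemize}
\item \eqref{var2} only admits test functions with $\nabla\overline{u}\in L^{10}(Q)$, and $\partial_t u$ is only known in $(L^{10}(W^{1,10})\cap L^{5/2}(Q))'$.
\item By contrast, $\nabla(T_k(u)^{\alpha-1})=(\alpha-1)T_k(u)^{\alpha-2}\nabla u\,1_{\{u<k\}}$ is no better than $\nabla u\in L^{\gamma(p)}(Q)$, and $\gamma(p)<2$ in general.
\end{itemize}
So neither the diffusion pairing nor the chain rule for the time derivative is justified. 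Since the theorem concerns an arbitrary weak solution, not the limit of the scheme in Subsection~\ref{subsec:regularized}, your bounds remain formal.

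A symptom is that the hypothesis $u\in L^{10/3}(Q)$ plays no real role in your argument. You use it only at the end to get $u\nabla v\in L^2(Q)$, which is automatic by then since $u\in L^{\beta(p)}(Q)\subset L^5(Q)$.

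The missing idea, as in the paper, has two parts.
\begin{itemize}
\item Freeze the coefficients and solve the linear problem \eqref{lin-aux} with ${\bf c}=\nabla v\in L^5(Q)$ and $a=\mu u^{p-1}-r\in L^{5/2}(Q)$ via Lemma~\ref{lem-B}. There the $L^q$ estimates are done on a regularized problem and close by plain Gronwall, because $\|{\bf c}\|_{L^5}^5+\|a\|_{L^{5/2}}^{5/2}\in L^1(0,T)$. No splitting or small intervals are needed despite criticality.
\item Identify $u=\widetilde u$ by duality with the backward problem \eqref{dual-lin}. Its solution only has $\nabla z\in L^2(Q)$, so the pairing $\int_0^T\!\int_\Omega(u-\widetilde u)\nabla v\cdot\nabla z$ makes sense exactly because $u\in L^{10/3}(Q)$ and $\nabla v\in L^5(Q)$.
\end{itemize}
Alternatively, you could repair your own route. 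First use $u\nabla v\in L^2(Q)$ and uniqueness for the Neumann heat equation to show that the given $u$ lies in $W_2$ with $\partial_t u\in L^2((H^1)')$. Only then are Lipschitz truncations legitimate test functions.

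Step~1 does not close as written either.
\begin{itemize}
\item The pointwise bound $\|v(t)\|_{L^s}\le\epsilon\|v(t)\|_{W^{2,5/2}}+C_\epsilon\|v(t)\|$ leads to $\int_0^T\|f(t)\|^{5/2}_{L^{5/2+\delta}}\|v(t)\|^{5/2}_{W^{2,5/2}}\,dt$. This is a product of two merely integrable functions of $t$, so nothing can be absorbed.
\item Iterating in the isotropic scale $X_q$ gains nothing, since the time exponent of $f$ is stuck at $5/2$.
\item Absorbing also presupposes $\|v\|_{X_{5/2}}<\infty$, which is what you are trying to prove.
\end{itemize}
The right quantity is $\|v\|_{L^\infty(L^s)}$ with $s$ large, since $\|fv\|_{L^{5/2}(Q)}\le\|f\|_{L^{5/2}(L^{5/2+\delta})}\|v\|_{L^\infty(L^s)}$. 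The paper gets $L^\infty(L^{\infty-})$ from the $L^q$ estimates of Corollary~\ref{lem-A}, with reaction coefficient $1-f1_{\Omega_c}\in L^{5/2}(Q)$ (Gronwall again). Maximal regularity then gives a solution $w\in X_{5/2}$ of the linear $v$-problem, and $v=w$ by uniqueness of that linear problem in $W_2$.

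Finally, the theorem only claims uniqueness among strong solutions. The paper gets this by a direct Gronwall estimate on the difference of two solutions in $X_u\times X_v$, testing by the differences $u$ and $v-\Delta v$. Your weak--strong duality plan is unnecessary, and as it stands it is unsupported. The coefficients inherited from the weak solution are in general below the integrability that Lemma~\ref{corol-A} requires for the adjoint system:
\begin{itemize}
\item its density, only in $L^{5p/3}(Q)$, in front of $\nabla V$, or its chemical gradient, only in $L^{10/3}(Q)$, as convection field;
\item $\varphi^{p-1}$, with $\varphi$ between the two densities.
\end{itemize}
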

The proof of Theorem \ref{strong} is given in Subsection \ref{seccionCriterium}.
\subsection{A related optimal control problem}\label{section2.3}
Knowing the existence  of weak solutions of system (\ref{eq1}) or (\ref{eq1b}) and the established regularity criterion \eqref{reg-crit},  we analyze a related optimal control problem.  Without loss of generality, we focus the analysis of the control problem considering the system with logistic term (\ref{eq1b}).
In any case, in order to study optimality conditions, the corresponding  linearized and adjoint problems must be solved in a suitable setting linked to the functional framework of the control-state problem (\ref{eq1b}). 
For this, we first consider the Sobolev space
$$
Z:=\{\varphi\in L^{5\alpha(p)/(3\alpha(p)-3)}(Q):\ \nabla\varphi\in L^{5\alpha(p)/(4\alpha(p)-3)}(Q)^3\},
$$
endowed with the norm $\| \varphi  \|_Z=\| \varphi \|_{L^{5\alpha(p)/(3\alpha(p)-3)}}+ 
\| \nabla\varphi \|_{L^{5\alpha(p)/(4\alpha(p)-3)}(Q)}$. Then, we define 
the vector space
\begin{eqnarray*}
Y_u:=\{g^0-\nabla\cdot{\bf g}^1 \ \hbox{in $Z'$}:\, g^0\in L^{5\alpha(p)/(2\alpha(p)+3)}(Q),\ {\bf g}^1\in L^{5\alpha(p)/(\alpha(p)+3)}(Q)^3\}.
\end{eqnarray*} 
In fact, $Y_u\subset Z'$ because the map $\varphi\in Z \mapsto \int_0^T\int_\Omega g^0\varphi+\int_0^T\int_\Omega {\bf g}^1\cdot\nabla \varphi \in \mathbb{R}$ is linear and continuous.
  On the other hand, by using the Hahn-Banach Theorem (mimicking the Proposition 8.14 in \cite{Brezis}), each element of $Z^\prime$ can be identified with elements of  $Y_u$; thus, 
  $$Y_u=Z'.
  $$
\begin{proposition}\label{ident}
Let $F\in Z'.$ Then, there exists 
$$(g^0,{\bf g}^1)\in L^{5\alpha(p)/(2\alpha(p)+3)}(Q)\times (L^{5\alpha(p)/(\alpha(p)+3)}(Q))^3$$ such that
$$\langle F,\varphi\rangle=
\int_0^T\int_\Omega g^0\varphi+\int_0^T\int_\Omega {\bf g}^1\cdot\nabla \varphi,
\quad \forall\varphi\in Z.
$$
In fact, $F=g^0-\nabla\cdot {\bf g}^1$ in $Z'$.
\end{proposition}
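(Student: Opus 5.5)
We follow the proof of Proposition 8.14 in \cite{Brezis}: embed $Z$ isometrically into a product of Lebesgue spaces, extend $F$ by Hahn--Banach, and represent the extension by the Riesz representation theorem for $L^s$ spaces. Set $a:=5\alpha(p)/(3\alpha(p)-3)$ and $b:=5\alpha(p)/(4\alpha(p)-3)$, the exponents defining $Z$. Since $\alpha(p)\ge 3$, both exponents lie in $(1,\infty)$: we have $a>1$ because $5\alpha>3\alpha-3$, and $b>1$ because $5\alpha>4\alpha-3$. Hence the dual exponents are finite and greater than one, and a direct computation gives
\[
a'=\frac{a}{a-1}=\frac{5\alpha(p)}{2\alpha(p)+3},\qquad b'=\frac{b}{b-1}=\frac{5\alpha(p)}{\alpha(p)+3}.
\]
These are exactly the integrability exponents for $g^0$ and ${\bf g}^1$ in the statement. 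Before the main argument, we will check that $Z$ is a Banach space. If $(\varphi_n)$ is Cauchy in $Z$, then $\varphi_n\to\varphi$ in $L^a(Q)$ and $\nabla\varphi_n\to G$ in $L^b(Q)^3$. Passing to the limit in the distributional identity $\int_Q \nabla\varphi_n\cdot\psi=-\int_Q\varphi_n\nabla\cdot\psi$ for $\psi\in C_c^\infty(Q)^3$ gives $G=\nabla\varphi$.

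Next, we consider the product space $E:=L^a(Q)\times L^b(Q)^3$ with norm $\|(h_0,{\bf h})\|_E=\|h_0\|_{L^a}+\|{\bf h}\|_{L^b}$, and the map $T:Z\to E$, $T\varphi=(\varphi,\nabla\varphi)$. By the definition of $\|\cdot\|_Z$, $T$ is a linear isometry, so $T(Z)$ is a (closed) subspace of $E$. We define $\Phi$ on $T(Z)$ by $\Phi(T\varphi):=\langle F,\varphi\rangle$. This is well defined because $T$ is injective, and it is continuous with $\|\Phi\|=\|F\|_{Z'}$. The Hahn--Banach theorem then gives a continuous linear extension $\tilde\Phi\in E'$ with the same norm.

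We then identify $E'$. Since $1<a,b<\infty$, the Riesz representation theorem on $L^a(Q)$ and on each component of $L^b(Q)^3$ yields $E'\cong L^{a'}(Q)\times L^{b'}(Q)^3$. The dual norm is the max-norm, which is harmless. Hence there exist $g^0\in L^{a'}(Q)$ and ${\bf g}^1\in L^{b'}(Q)^3$ such that $\tilde\Phi(h_0,{\bf h})=\int_Q g^0h_0+\int_Q{\bf g}^1\cdot{\bf h}$. Evaluating at $(h_0,{\bf h})=T\varphi$ gives the claimed representation $\langle F,\varphi\rangle=\int_Q g^0\varphi+\int_Q {\bf g}^1\cdot\nabla\varphi$ for all $\varphi\in Z$.

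Finally, the identity $F=g^0-\nabla\cdot{\bf g}^1$ in $Z'$ is just the definition of the functional $g^0-\nabla\cdot{\bf g}^1$ given before the statement. That definition is a distributional divergence with no boundary terms, which is consistent with the non-flux setting. The main obstacle is purely bookkeeping: verifying the conjugate exponents and reflexivity, in particular that all exponents stay in $(1,\infty)$ for every $p>1$, which holds because $\alpha(p)\ge 3$. The representation is not unique, but uniqueness is not claimed.
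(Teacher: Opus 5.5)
Your proposal is correct and follows the paper's proof essentially verbatim. Both embed $Z$ isometrically into $L^{5\alpha(p)/(3\alpha(p)-3)}(Q)\times L^{5\alpha(p)/(4\alpha(p)-3)}(Q)^3$ via $\varphi\mapsto(\varphi,\nabla\varphi)$, take a norm-preserving Hahn--Banach extension, and apply the Riesz representation for the dual of the product space; your explicit check of the conjugate exponents (and of the completeness of $Z$, which Hahn--Banach does not actually require) only spells out what the paper leaves implicit.
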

\begin{proof}
Consider the product space $E=L^{5\alpha(p)/(3\alpha(p)-3)}(Q)\times (L^{5\alpha(p)/(4\alpha(p)-3)}(Q))^3$ endowed with the norm 
$\Vert (h^0,{\bf h}^1)\Vert_E=\Vert h^0\Vert_{L^{5\alpha(p)/(3\alpha(p)-3)}(Q)}+\Vert {\bf h}^1\Vert_{L^{5\alpha(p)/(4\alpha(p)-3)}(Q)}.$ 
The mapping $$A:\varphi\in Z\longmapsto (\varphi, \nabla\varphi)\in E$$ is an isometry from $Z$ into $E.$ Considering the graph $G=A(Z)\subset E$  which is a Banach space endowed with the norm  of $E$, the mapping 
$$B:(\varphi, \nabla\varphi)\in G\longmapsto \langle F,\varphi\rangle \in \mathbb{R}$$ is a linear and continuous functional on $G$ and $\Vert B\Vert_{G'}=\Vert F\Vert_{Z'}$. Then, by the Hahn-Banach Theorem, $B$ can be extended to a linear and continuous functional $\Phi:E\rightarrow \mathbb{R}$ with $\Phi|_{G}=B$ and $\Vert \Phi\Vert_{E'}=\Vert B\Vert_{G'}=\Vert F\Vert_{Z'}.$ By the Riesz representation Theorem, there exists $(g^0,{\bf g}^1)\in L^{5\alpha(p)/(2\alpha(p)+3)}(Q)\times (L^{5\alpha(p)/(\alpha(p)+3)}(Q))^3$ such that
$$\langle \Phi,(h^0,{\bf h}^1)\rangle=\int_0^T\int_\Omega g^0 h^0+\int_0^T\int_\Omega {\bf g}^1\cdot {\bf h}^1,\quad  \forall (h^0,{\bf h}^1)\in E.$$
On the other hand, using that $\Phi|_G=B$,  for all $\varphi\in Z,$ it holds
$$
\langle \Phi,(\varphi,\nabla\varphi)\rangle = \langle B,(\varphi,\nabla\varphi)\rangle
=\langle F,\varphi\rangle.$$
Then,  the proof can be deduced from the last two properties.
\end{proof}

%
 Once defined the  Banach space $Y_u$, we define the vector space
\begin{eqnarray}
&&X_u:=\left\{u\in X\,:\, \hbox{$u$ is a variational solution of a heat problem}  \right.
\nonumber\\
&&\hspace{1.5cm}\partial_tu-\Delta u=g^0-\nabla\cdot{\bf g}^1,\ u(0)=w_0,\ (-\nabla u + {\bf g}^1)\cdot {\bf n}\vert_{\partial \Omega}=0\ \mbox{ with }\nonumber \\
&&\hspace{1.5cm}
w_0\in L^{\alpha(p)}(\Omega), \left.g^0\in L^{5\alpha(p)/(2\alpha(p)+3)}(Q)\mbox{ and } {\bf g}^1\in L^{5\alpha(p)/(\alpha(p)+3)}(Q)^3\right\}, \label{xu}
\end{eqnarray}
endowed with the norm 
$$\Vert u\Vert_{X_u}=\Vert g^0-\nabla\cdot {\bf g}^1\Vert_{Z'}+\Vert w_0\Vert_{L^{\alpha(p)}}.
$$
 It holds that $(X_u,\Vert\cdot\Vert_{X_u})$ is a Banach space, where in particular $X_u \varsubsetneq X.$  
 
 Finally, we consider the Banach space
\begin{eqnarray}
&&X_v:=\{ v \in X_{5/2}: \, \partial_{\bf n} v \vert_{\partial \Omega}=0\}.\label{xv}
\end{eqnarray}

\

 Let  
$\mathcal{F}\subset L^{5/2}(L^{5/2+}(\Omega_c))$ be a nonempty, closed and convex set. 
 We assume the initial data $(u_0,v_0)\in L^{\alpha(p)}(\Omega)\times W^{6/5,5/2}(\Omega),
$ with $u_0\ge 0$ and $v_0\ge 0$ in $\Omega$, 
 and  $f\in\mathcal{F}$ the control which acts
on the  $v$-equation in a bilinear form. We define the admissible set  by 
\begin{equation}\label{s-ad}
\mathcal{S}_{ad}=\left\{
\begin{array}{lc}
(u,v,f)\in X_u\times X_v\times \mathcal{F}:\ (u,v)\  \mbox{is the strong solution of (\ref{eq1b})-(\ref{eq2})}\\
\ \ \ \ \ \ \ \ \ \ \ \ \ \ \ \ \ \ \ \ \ \ \ \ \ \ \ \ \ \ \ \ \ \ \mbox{with control}\ f
\end{array}
\right\}.
\end{equation}

%

Then, we will study the following (bilinear) optimal control problem:
\begin{equation}\label{C-3}
\left\{
\begin{array}{lc}
\min J(u,v,f),\\
\mbox{subject to}\ (u,v,f)\in \mathcal{S}_{ad},
\end{array}
\right.
\end{equation}
where $J: \left(L^{5p/2}(Q)\cap L^{10/3}(Q)\right)\times L^2(Q)\times L^{5/2}(L^{5/2+}(\Omega_c)) \to \mathbb{R}$ is the tracking cost functional defined by 
\begin{eqnarray*}\label{func}
J(u,v,f)&=&\gamma_u\left(\displaystyle\frac{1}{5p/2}\int_{0}^{T}\!\!\Vert u-u_{d}\Vert_{L^{5p/2}}^{5p/2} dt+\frac{1}{10/3}\int_0^T\|u-u_d\|^{10/3}_{L^{10/3}}dt\right)\\
&&+ 
\displaystyle\frac{\gamma_v}{2}\!\int_{0}^{T}\!\!\Vert v-v_{d}\Vert^{2} dt
+
\ \displaystyle\frac{\gamma_f}{5/2}\displaystyle\int_{0}^{T}\!\!\Vert f\Vert _{L^{5/2+}(\Omega_c)}^{5/2} dt.
\end{eqnarray*}
Here, the pair of functions $(u_d,v_d)\in (L^{5p/2}(Q)\cap L^{10/3}(Q))\times L^2(Q)$ represent  the desired states, and the real numbers $\gamma_u$, $\gamma_v$ and $\gamma_f$  measure the cost of the states and control, respectively. These coefficients satisfy
$$
\gamma_u>0,\ \gamma_v\ge 0, \mbox{ either } \gamma_f>0 \mbox{ or } \gamma_f=0 \mbox{ and } \mathcal{F} \mbox{ is bounded in } L^{5/2}(L^{5/2+}(\Omega_c)).
$$
The functional $J(u,v,f)$ describes the deviation of the cell density $u$ and the chemical concentration $v$ from some desired states $u_d$ and $v_d$ respectively, plus the cost of the control in the $L^{5/2}(L^{5/2+}(\Omega_c))$-norm. 
We are going to prove existence of (global) optimal solutions and analyze first order optimality conditions for (local) optimal solutions. 

\begin{definition}[Global optimal solution]\label{opt_sol}
A triple $(u^*,v^*,f^*)\in\mathcal{S}_{ad}$ is said a global optimal solution of problem \eqref{C-3} if
\begin{equation}\label{os-1}
J(u^*,v^*,f^*)=\min_{(u,v,f)\in\mathcal{S}_{ad}}J(u,v,f).
\end{equation}
\end{definition}
\begin{definition}[Local optimal solution]\label{loc_sol}
A triple $(u^*,v^*,f^*)\in\mathcal{S}_{ad}$ is said a local optimal solution of problem \eqref{C-3}, if there exists $\varepsilon>0$ such that  $J(u^*,v^*,f^*)\le J(u,v,f)$ for all $(u,v,f)\in\mathcal{S}_{ad}$ satisfying 
$
\|u-u^*\|_{X_u}+\|v-v^*\|_{X_v}+\|f-f^*\|_{L^{5/2}(L^{5/2+}(\Omega_c))}\le\varepsilon .
$
\end{definition}

\begin{theorem}[Existence of global optimum] \label{control}
Assume that 
$\mathcal{S}_{ad}\ne\emptyset.$ Then, there exists at least one global optimal solution of (\ref{C-3}).
\end{theorem}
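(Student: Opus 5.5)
We use the direct method of the calculus of variations with a minimizing sequence. Since $\mathcal{S}_{ad}\neq\emptyset$ and $J\ge 0$, the infimum $J^*:=\inf_{\mathcal{S}_{ad}}J$ is finite. We take $(u_m,v_m,f_m)\in\mathcal{S}_{ad}$ with $J(u_m,v_m,f_m)\to J^*$.

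\emph{Uniform bounds.} If $\gamma_f>0$, the definition of $J$ bounds $\{f_m\}$ in $L^{5/2}(L^{5/2+}(\Omega_c))$; if $\gamma_f=0$, the bound follows because $\mathcal{F}$ is bounded. Since $\gamma_u>0$, $J$ also bounds $\{u_m\}$ in $L^{5p/2}(Q)\cap L^{10/3}(Q)$, and in particular $\{u_m^p\}$ in $L^{5/2}(Q)$. Each $(u_m,v_m)$ is the strong solution with control $f_m$, and the constant $\widehat K$ in (\ref{bound_sol}) depends only on the data, on $\|f_m\|_{L^{5/2}(L^{5/2+}(\Omega_c))}$ and on $\|u_m^p\|_{L^{5/2}(Q)}$. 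Theorem \ref{strong} therefore gives $\|(u_m,v_m)\|_{X\times X_{5/2}}\le C$ uniformly in $m$.

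We also need a bound in $X_u$. We write the $u$-equation as a heat problem with $g^0_m=ru_m-\mu u_m^p$, ${\bf g}^1_m=-u_m\nabla v_m$ and $w_0=u_0$. Using $u_m\in L^{\beta(p)}(Q)$ and $\nabla v_m\in L^5(Q)$ (Lemma \ref{An}), we check that $g^0_m$ and ${\bf g}^1_m$ are bounded in the Lebesgue spaces appearing in the definition of $Y_u$. For instance, $u_m\nabla v_m\in L^{5\beta/(\beta+5)}$, and one verifies $5\beta(p)/(\beta(p)+5)\ge 5\alpha(p)/(\alpha(p)+3)$. This gives a uniform bound on $\|u_m\|_{X_u}$.

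\emph{Compactness and passage to the limit.} Using reflexivity and the bounds, we extract subsequences such that:
\begin{itemize}
\item $f_m\rightharpoonup f$ weakly in $L^{5/2}(L^{5/2+}(\Omega_c))$, with $f\in\mathcal{F}$ because $\mathcal{F}$ is closed and convex, hence weakly closed;
\item $v_m\rightharpoonup v$ weakly in $X_{5/2}$;
\item $u_m\rightharpoonup u$ weakly in $L^{\beta(p)}(Q)$ and in $L^2(H^1)$, weakly-$*$ in $L^\infty(L^{\alpha(p)})$, and $\partial_tu_m\rightharpoonup\partial_tu$ in $L^2((H^1)')$.
\end{itemize}
The Aubin--Lions lemma then gives strong convergence $u_m\to u$ in $L^2(Q)$. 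Interpolating with the $L^{\beta(p)}$ bound upgrades this to strong convergence in $L^s(Q)$ for every $s<\beta(p)$, in particular for $s=5p/2$ when $p>2$. Similarly, $v_m\to v$ strongly in $C(L^2)$, and $\nabla v_m\to\nabla v$ strongly in $L^s(Q)$ for $s<5$.

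This is the main step: passing to the limit in the nonlinear terms $u_m\nabla v_m$, $u_m^p$ and $f_mv_m1_{\Omega_c}$. The first two converge in $L^1(Q)$ as products of strongly convergent sequences with compatible exponents. For the bilinear term $f_mv_m$ we combine the weak convergence of $f_m$ with the strong convergence of $v_m$ in $L^{5/3}(L^{r})$, where $r$ is the conjugate-type exponent of $5/2+$; this is available since $X_{5/2}\subset L^\infty(L^{\infty-})$ compactly in lower norms. The limit $(u,v)$ is thus a weak solution with control $f$.

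It remains to show $(u,v)$ is the strong solution. Weak lower semicontinuity of norms gives $u\in L^{5p/2}(Q)\cap L^{10/3}(Q)$, so criterion (\ref{reg-crit}) holds. Theorem \ref{strong} then yields $(u,v)\in X\times X_{5/2}$, uniqueness, and $(u,v)\in X_u\times X_v$, because the data $g^0,{\bf g}^1$ have the required integrability. Hence $(u,v,f)\in\mathcal{S}_{ad}$.

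\emph{Conclusion.} $J$ is a sum of convex continuous functionals, so it is weakly lower semicontinuous in $(u,v,f)$. Therefore $J(u,v,f)\le\liminf_m J(u_m,v_m,f_m)=J^*$, and $(u,v,f)$ is a global optimal solution.
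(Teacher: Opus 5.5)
Your proposal is correct and follows essentially the same route as the paper: a minimizing sequence bounded through the cost functional and the a priori estimate \eqref{bound_sol} of Theorem \ref{strong}, Aubin--Lions compactness to pass to the limit in $u_m\nabla v_m$, $u_m^p$ and $f_mv_m1_{\Omega_c}$, weak closedness of $\mathcal{F}$, and weak lower semicontinuity of $J$. The differences are minor. You check the $X_u$ bound directly from the equation and certify the limit as the strong solution by reapplying Theorem \ref{strong}, whereas the paper asserts both directly. You should also add the one-line check that the initial data pass to the limit, e.g.\ via $u_m\to u$ in $C((H^1)')$ and $v_m\to v$ in $C(L^2)$.
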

\begin{remark}
Note that if $(u,v)$ is a weak solution of \eqref{eq1} or \eqref{eq1b} such that $J(u,v,f)<\infty$, then $f\in L^{5/2}(L^{5/2+}(\Omega_c)) $ and $u\in L^{5p/2}(Q)\cap L^{10/3}(Q)$. Then from Theorem \ref{strong}, the pair $(u,v)$ is the strong solution of  \eqref{eq1} or \eqref{eq1b} and $(u,v)\in X_u\times X_v$, hence
$
(u,v,f)\in \mathcal{S}_{ad}
$. 
In Remark~\ref{R4-1} we will give a particular case where  $\mathcal{S}_{ad}\ne\emptyset$ can be  ensured.
\end{remark}
The proof of Theorem \ref{control} is given in Subsection \ref{Sec:OptimalSolution}.
%

\


In order to identify the gradient of the reduced functional, we are going to consider the so-called equality operator 
$$\mathcal{S}:=(\mathcal{S}_1,\mathcal{S}_2):
{X}_{u}  \times {X}_{v} \times L^{5/2}(L^{5/2+}(\Omega_c))
\to Y_u \times Y_v ,$$ 
with $Y_u=Z'$ and $Y_v:=L^{5/2}(Q)$, 
defined as follows:
\begin{equation}\label{oper-s}
\left\{
\begin{array}{rcl}
\langle\mathcal{S}_1(u,v,f),\varphi\rangle_{Z'}
&:=&\displaystyle\int_0^T\langle\partial_t u, \varphi\rangle_{(H^1)'}
+\int_0^T\int_\Omega\nabla u\cdot\nabla \varphi 
+\int_0^T\int_\Omega u\nabla v\cdot\nabla \varphi
\\
&&-\displaystyle\int_0^T\int_\Omega(r\,u-\mu\, |u|^p) \varphi,\qquad \forall \,\varphi\in Z,
\\
\mathcal{S}_2(u,v,f)&:=&\partial_tv-\Delta v+v-|u|^p-f\,v\,1_{\Omega_c}
,\quad \hbox{in $L^{5/2}(Q)$}.
\end{array}
\right.
\end{equation}
Now, we are in position to identify the gradient of the reduced functional via the adjoint problem. 
\begin{theorem}
\label{FOC}
Let $(u^*,v^*,f^*)\in X_u\times X_v \times L^{5/2}(L^{5/2+}(\Omega_c))$ such that $\mathcal{S}(u^*,v^*,f^*)=\bf 0$. 
Then there exist neighborhoods $\mathcal{B}_{f^*}$ and $\mathcal{B}_{(u^{*},v^{*})},$ of $f^{*}$ and $(u^{*},v^{*})$ in the $L^{5/2}(L^{5/2+}(\Omega_c))$ and $X_u\times X_v$ topologies, respectively, such that the reduced cost functional $J_0:\mathcal{B}_{f^*}\rightarrow \mathbb{R}$ is well-defined as $J_0(f):=J(u_f,v_f,f)$, for any $f\in \mathcal{B}_{f^*}$, where $(u_f,v_f)$ is the unique strong solution of (\ref{eq1b})-(\ref{eq2}) in $\mathcal{B}_{(u^{*},v^{*})}$ with control $f$. 
 Moreover, $J_0$ is Fr\'echet differentiable, and its derivative is 
\begin{eqnarray}\label{cop2a}
J_0'(f^*)[f]=
\int_0^T\int_{\Omega_c}\left(
\gamma_f 
\frac{{\rm sgn}(f^{*})\vert f^{*}\vert^{3/2+}}
{\| f^* \|_{L^{5/2+}(\Omega_c)}^{0+}}
+{v^*}\eta\right)f ,\quad  \forall \, {f}\in L^{5/2}(L^{5/2+}(\Omega_c)),
\end{eqnarray}
where $\eta$ corresponds to the second component of the pair $(\sigma,\eta)\in  Z\times L^{5/3}(Q)$ satisfying the following adjoint system  (in the corresponding weak sense)
\begin{equation}\label{adjz0}
\left\{
\begin{array}{l}%
  -\partial_t\sigma-\Delta\sigma+\nabla\sigma\cdot\nabla{v^*}-p{(u^*)}^{p-1}\eta-r\sigma+p\mu (u^*)^{p-1}\sigma=
  \gamma_u  h({u^*}-u_d) ,\\ 
  -\partial_t\eta-\Delta\eta-\nabla\cdot({u^*}\nabla\sigma)+\eta-{f^*}\eta\,1_{\Omega_c}=\gamma_v({v^*}-v_d),\\
  \sigma(T)=0,\ \eta(T)=0\ \mbox{ in }\ \Omega,\\
{\partial_{\bf n}\sigma}=0,\ \ {\partial_{\bf n}\eta}=0\ \mbox{ on }\  (0,T)\times\partial\Omega,
 \end{array}
\right.
\end{equation}
with $h(w):=|w|^{(5p-4)/2} w+|w|^{4/3}w$.
 \end{theorem}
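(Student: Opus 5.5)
The plan is to apply the Implicit Function Theorem to the equality operator $\mathcal{S}$ defined in (\ref{oper-s}), and then identify $J_0'(f^*)$ through the adjoint system (\ref{adjz0}).

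\textbf{Step 1: $\mathcal{S}$ is of class $C^1$.} We check that $\mathcal{S}:X_u\times X_v\times L^{5/2}(L^{5/2+}(\Omega_c))\to Y_u\times Y_v$ is well defined and continuously Fr\'echet differentiable. The terms to control are:
\begin{itemize}
\item the bilinear term $u\nabla v$, which we estimate in $L^{5\alpha(p)/(\alpha(p)+3)}(Q)$ using $u\in L^{\beta(p)}(Q)$ and $\nabla v\in L^5(Q)$ (Lemma~\ref{An});
\item the power $|u|^p$, which lies in $L^{5/2}(Q)$ for the $v$-equation and in $L^{5\alpha(p)/(2\alpha(p)+3)}(Q)$ for the $u$-equation, by the choice of $\alpha(p),\beta(p)$;
\item the product $fv$, which lies in $L^{5/2}(Q)$ since $v\in L^\infty(L^{\infty-})$.
\end{itemize}
Since $p>1$, the Nemytskii map $u\mapsto|u|^p$ is $C^1$ with derivative $p|u|^{p-2}u\,\cdot$. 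This follows from the Lebesgue-exponent bookkeeping, using $u^{p-1}\in L^{\beta(p)/(p-1)}(Q)$ and $\beta(p)\ge5(p-1)$.

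\textbf{Step 2: the partial derivative is an isomorphism.} We show that $\partial_{(u,v)}\mathcal{S}(u^*,v^*,f^*):X_u\times X_v\to Y_u\times Y_v$ is bijective. Its action on $(U,V)$ is the linearized system
\[
\partial_tU-\Delta U+\nabla\cdot(U\nabla v^*+u^*\nabla V)-rU+p\mu(u^*)^{p-1}U=g_u,
\]
\[
\partial_tV-\Delta V+V-p(u^*)^{p-1}U-f^*V1_{\Omega_c}=g_v,
\]
with zero initial data. Unique solvability in $X_u\times X_v$ is exactly what Lemma~\ref{corol-A} (the linearized well-posedness announced in the introduction) provides, together with the appendix results on linear diffusion--convection--reaction equations. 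The Implicit Function Theorem then yields the neighborhoods $\mathcal{B}_{f^*}$ and $\mathcal{B}_{(u^*,v^*)}$ and a $C^1$ map $f\mapsto(u_f,v_f)$ with $\mathcal{S}(u_f,v_f,f)=0$. Its derivative $(U,V)=(u_f,v_f)'(f^*)[f]$ solves the linearized system with $g_u=0$ and $g_v=fv^*1_{\Omega_c}$.

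We must also confirm that $(u_f,v_f)$ is the strong solution of (\ref{eq1b})-(\ref{eq2}), including nonnegativity. Uniqueness within the class of Theorem~\ref{strong} identifies it with any weak solution satisfying the regularity criterion; alternatively, the admissible-set setting suffices.

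\textbf{Step 3: differentiability of $J$.} By the chain rule, $J_0$ is differentiable, since $J$ is $C^1$ on the indicated spaces. For the $u$-terms we use $\frac{d}{dw}\frac1q\|w\|_{L^q}^q=|w|^{q-2}w$; the exponents $5p/2$ and $10/3$ give exactly $h$. The control term, once the $\varepsilon$-notation ``$5/2+$'' is unpacked, gives the first summand in (\ref{cop2a}). Note that if $f^*=0$ on a time set, the norm factor must be interpreted as zero there. This gives
\[
J_0'(f^*)[f]=\gamma_u\!\int_Q h(u^*-u_d)U+\gamma_v\!\int_Q(v^*-v_d)V+(\text{control term}).
\]

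\textbf{Step 4: the adjoint system, by duality.} We obtain $(\sigma,\eta)\in Z\times L^{5/3}(Q)$ as the solution of the adjoint of the isomorphism in Step~2. The transpose $[\partial_{(u,v)}\mathcal{S}]^*:Y_u'\times Y_v'=Z\times L^{5/3}(Q)\to(X_u\times X_v)'$ is an isomorphism, so there is a unique $(\sigma,\eta)$ representing the functional $(U,V)\mapsto\gamma_u\int h(u^*-u_d)U+\gamma_v\int(v^*-v_d)V$. We must check that this functional is continuous on $X_u\times X_v$:
\begin{itemize}
\item $h(u^*-u_d)$ pairs with $U\in L^{\beta(p)}\cap L^{10/3}$, since $h(u^*-u_d)\in L^{(5p/2)'}+L^{10/7}$ and $U\in L^{5p/2}\cap L^{10/3}$ holds because $\beta(p)\ge 5p/2$ and $\beta(p)\ge 10/3$;
\item $v^*-v_d\in L^2(Q)$ pairs with $V\in X_{5/2}\subset L^\infty(L^{\infty-})$.
\end{itemize}
Testing with smooth functions recovers (\ref{adjz0}) in the weak sense, including the terminal conditions $\sigma(T)=\eta(T)=0$ and the Neumann conditions. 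Then
\[
\gamma_u\!\int h\,U+\gamma_v\!\int(v^*-v_d)V=\langle(\sigma,\eta),\partial_{(u,v)}\mathcal{S}(U,V)\rangle=\int_Q\eta\, f v^*1_{\Omega_c},
\]
which is (\ref{cop2a}).

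\textbf{Main obstacle.} The delicate step is Step~2, that is, the linearized isomorphism in the nonstandard spaces $X_u$ and $Y_u=Z'$. I rely on Lemma~\ref{corol-A} for it. The duality in Step~4 requires care only in matching the exponents of $Z$ against the reflexive duals.
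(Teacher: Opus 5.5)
Your proposal is correct and follows the paper's proof essentially step for step: $C^1$ regularity of $\mathcal{S}$ (Lemma~\ref{der-oper}), the linearized isomorphism from Lemma~\ref{corol-A} (Proposition~\ref{v1}), the Implicit Function Theorem (Theorem~\ref{v9}), differentiability of $J$ (Lemma~\ref{DerJ}), and $(\sigma,\eta)\in Z\times L^{5/3}(Q)$ obtained from the dual isomorphism $\mathcal{A}'$ of Remark~\ref{rem_dual}, then paired with $(U,V)=D\mathcal{E}(f^*)[f]$. Two small points need fixing. First, your displayed linearized $U$-equation has the wrong sign on the chemotactic terms: for this repulsive model it is $-\nabla\cdot(U\nabla v^*+u^*\nabla V)$, and this sign is what produces $+\nabla\sigma\cdot\nabla v^*$ and $-\nabla\cdot(u^*\nabla\sigma)$ in (\ref{adjz0}). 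Second, your appeal to uniqueness for the nonnegativity of the IFT solution would presuppose that a strong solution already exists for the perturbed control $f$; the paper leaves this point implicit, and it is settled directly by testing with $u_-$ (the extra term $\mu\int_\Omega|u|^{p-1}u_-^2$ has coefficient in $L^{5/2}(Q)$) and then with $v_-$.
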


 The proof of Theorem \ref{FOC} is given in Subsection \ref{Sec:FOC}.
  \begin{remark}
 For $p\le 4/3$, the regularity of the solution $(\sigma,\eta)$ of the adjoint system \eqref{adjz0} can be improved to  
 $(\sigma,\eta)\in W_2 \times L^{2}(Q)$  (see Proposition \ref{Reg-mult} below).
%
 \end{remark}
  \begin{corollary}[First-order necessary optimality conditions]  \label{optim-cond}
 Let $(u^*,v^*,f^*)\in \mathcal{S}_{ad}$ be a local optimal solution of  \eqref{C-3}.
 Then the following optimality condition holds
\begin{equation}\label{L-3}
\int_0^T\int_{\Omega_c}\left(
\gamma_f 
\frac{{\rm sgn}(f^{*})\vert f^{*}\vert^{3/2+}}
{\| f^* \|_{L^{5/2+}(\Omega_c)}^{0+}}
+{v^*}\eta\right) (f-f^*)\ge0,\ \ \ \forall f\in\mathcal{F},
\end{equation}
where $(\sigma,\eta)$ is the solution of the adjoint system \eqref{adjz0}.
 \end{corollary}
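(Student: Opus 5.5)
The corollary is the standard variational inequality for a local minimum over a convex set, obtained from the differentiability in Theorem~\ref{FOC}. I would proceed in three steps.

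\emph{Step 1: set up the reduced functional.} Since $(u^*,v^*,f^*)\in\mathcal{S}_{ad}$, we have $\mathcal{S}(u^*,v^*,f^*)=\mathbf{0}$ with $(u^*,v^*)\in X_u\times X_v$. Hence Theorem~\ref{FOC} applies and gives neighborhoods $\mathcal{B}_{f^*}$ and $\mathcal{B}_{(u^*,v^*)}$. On $\mathcal{B}_{f^*}$ the reduced cost $J_0(f)=J(u_f,v_f,f)$ is well defined and Fr\'echet differentiable at $f^*$, with derivative (\ref{cop2a}).

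\emph{Step 2: compare with admissible perturbations.} Fix $f\in\mathcal{F}$ and set $f_t=f^*+t(f-f^*)$ for $t\in(0,1]$. By convexity, $f_t\in\mathcal{F}$. For $t$ small, $f_t\in\mathcal{B}_{f^*}$, so $(u_{f_t},v_{f_t})$ is the unique strong solution in $\mathcal{B}_{(u^*,v^*)}$. Therefore $(u_{f_t},v_{f_t},f_t)\in\mathcal{S}_{ad}$, using that strong solutions lie in $X_u\times X_v$.

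The point requiring care is that $(u_{f_t},v_{f_t},f_t)$ must fall inside the $\varepsilon$-ball of Definition~\ref{loc_sol}. This means showing $\|u_{f_t}-u^*\|_{X_u}+\|v_{f_t}-v^*\|_{X_v}\to0$ as $t\to0$. It follows from the control-to-state map $f\mapsto(u_f,v_f)$ being $C^1$, in particular continuous, into $X_u\times X_v$. That regularity comes from the Implicit Function Theorem argument behind Theorem~\ref{FOC}, which I take as established there together with the well-posedness of the linearized system. Local optimality then gives $J_0(f_t)\ge J_0(f^*)$ for all small $t>0$.

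\emph{Step 3: pass to the limit.} Dividing by $t$ and letting $t\to0^+$, Fréchet differentiability gives
$$J_0'(f^*)[f-f^*]=\lim_{t\to0^+}\frac{J_0(f^*+t(f-f^*))-J_0(f^*)}{t}\ge0.$$
Substituting the representation (\ref{cop2a}) with direction $f-f^*$ yields (\ref{L-3}), where $(\sigma,\eta)$ is the adjoint solution of (\ref{adjz0}).

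One degenerate case needs a remark. If $f^*=0$ on a set where the $\|f^*\|^{0+}$ normalization is singular, the $\gamma_f$-term is interpreted as in Theorem~\ref{FOC}; that term is the derivative of the convex part $\frac{\gamma_f}{5/2}\int_0^T\|f\|^{5/2}_{L^{5/2+}}$, and its differentiability was already handled there.

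I expect the only genuine obstacle to be Step 2's continuity of the state with respect to the control in the $X_u\times X_v$ norm, which is what makes the admissible perturbations remain in the local-optimality neighborhood. This is supplied by the Implicit Function Theorem framework of Theorem~\ref{FOC}.
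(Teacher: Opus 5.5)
Your proposal is correct and follows the paper's argument: restrict to the reduced problem $\min J_0$ over $\mathcal{B}_{f^*}\cap\mathcal{F}$, use the convex combinations $f^*+t(f-f^*)$, pass to the limit in the difference quotients, and insert the gradient representation \eqref{cop2a}. Your explicit check that the perturbed triples stay inside the $\varepsilon$-ball of Definition~\ref{loc_sol} uses the continuity of the control-to-state map $\mathcal{E}$ into $X_u\times X_v$, which the paper leaves implicit when it says that $f^*$ solves the reduced local problem.
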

 The proof of Corallary \ref{optim-cond} is given in Subsection \ref{Sec:optim-cond}.
 
\section{Existence of weak and strong solutions}

In this section we will prove the existence of weak  solutions of systems \eqref{eq1} and \eqref{eq1b} (Theorem \ref{teo1}), 
as well as the proof of existence and uniqueness of strong solution under the regularity criterion (Theorem \ref{strong}).
First of all, we introduce some formal computations to give the main ideas to treat the existence of weak solutions. Then, to justify these formal computations, the weak solution is obtained as the limit of a sequence of strong solutions of a family of regularized problems. 
The regularization incorpores an elliptic problem for the variable $v$ with an artificial diffusion with parameter $\varepsilon>0.$ 
Finally, we prove that any  weak solution satisfying  the regularity criterion \eqref{reg-crit} has global-in-time strong regularity.

\subsection {Formal computations related to weak solutions.}\label{Sec:FormalComput}

An important step to prove the existence of weak solutions  for  (\ref{eq1})-(\ref{eq2}) or \eqref{eq1b}--(\ref{eq2})  is to deduce an energy inequality depending on the norm $L^\infty(L^2)\cap L^2(H^1)$ of $u^{p/2}$ and $\nabla v$. 

Starting with the non-logistic problem (\ref{eq1}), 
multiplying (\ref{eq1})$_1$ by $\frac{1}{p-1}u^{p-1}$ and \eqref{eq1}$_2$ by $-\frac1p\Delta v$; then, integrating by parts in the spatial variable, adding the corresponding results in order to cancel the chemotaxis and production effects, and then using the H\"older, Young and Gagliardo-Nirenberg inequalities (see details in Subsection \ref{subsec:regularized}) we arrive at:
\begin{eqnarray} \label{B-20}
\frac{d}{dt}\!\left(\frac{1}{p(p-1)}\|u^{p/2}\|^2\!+\!\frac{1}{2p}\|\nabla v\|^2
\right)\!+\!\frac{4}{p^2}\|\nabla(u^{p/2})\|^2\!+\!\frac{1}{4p}\|\Delta v\|^2\!+\!\frac1p\|\nabla v\|^2\le\! {C\|f\|^{5/2}_{L^{5/2}}\|v\|^2_{H^1}.}
\end{eqnarray}

Hereafter, $C>0$ denotes different constants. On the other hand, integrating  (\ref{eq1})$_2$ in $\Omega$, then multiplying the resulting equation by $\frac1p\int_\Omega v$ and applying the H\"older and Young inequalities, we can obtain
\begin{eqnarray}
\frac{1}{2p}\frac{d}{dt}\left(\int_\Omega v\right)^2+\frac{1}{2p}\left(\int_\Omega v\right)^2
&\le&\frac{1}{p}\left(\int_\Omega u^p\right)^2+\frac{1}{p}C\|f\|^2\|v\|^2\nonumber\\
&\le&\frac{1}{p}\|u\|^{2p}_{L^p}+\frac{1}{p}C\|f\|^2\|v\|^2.\label{B-30}
\end{eqnarray}
%
%
In order to bound the term $\|u\|^{2p}_{L^p}$,  
 from the Gagliardo-Nirenberg interpolation inequality of $L^2(\Omega)$ between $H^1(\Omega)$ and $L^{2/p}(\Omega)$, we have
\begin{equation}\label{B-4z}
\|u\|^{2p}_{L^p}=\|u^{p/2}\|^4\le C\left(\|\nabla (u^{p/2})\|^{12(p-1)/(3p-1)}\|u^{p/2}\|^{8/(3p-1)}_{L^{2/p}}+\|u^{p/2}\|^4_{L^{2/p}}\right).
\end{equation}
At this point, we distinguish three cases, namely, $p\in(1,5/3),$ $p=5/3,$ and $p\in(5/3,\infty).$ For $p\in(1,5/3]$ we can obtain energy estimates only with the diffusion, while in the last case $p\in(5/3,\infty)$ we will need the logistic term. More exactly, we have:
\begin{enumerate}
\item 
\underline {$p\in\left(1,5/3\right)$ and non-logistic case.} Then $12(p-1)/(3p-1)<2;$ thus we can apply the Young inequality on the right side of \eqref{B-4z}, and take into account that $\|u(t,\cdot )\|_{L^1}=\|u_0\|_{L^1}$ 
 (which can be  deduced integrating (\ref{eq1})$_1$ in $\Omega$), to get
\begin{eqnarray}
\frac{1}{p}\|u\|^{2p}_{L^p}
&\le&\frac{2}{p^2}\|\nabla (u^{p/2})\|^2+C\|u^{p/2}\|^{8/(5-3p)}_{L^{2/p}}
+C\|u^{p/2}\|^4_{L^{2/p}}\nonumber\\
&=&\frac{2}{p^2}\|\nabla (u^{p/2})\|^2+C\left(\int_\Omega u_{0}\right)^{8/(5-3p)}+C\left(\int_\Omega u_{0}\right)^4.\label{B-50}
\end{eqnarray}
Consequently, adding \eqref{B-20} and \eqref{B-30}, using \eqref{B-50}, we conclude the energy inequality
\begin{eqnarray}\label{energy0}
\frac{d}{dt}\left( \frac{1}{p(p-1)}\Vert  u^{p/2}\Vert^2
+\frac{1}{2p}\Vert v\Vert_{H^1}^2\right)
&+&{\frac{C}{p}\Vert v\Vert_{H^2}^2}+\frac{2}{p^2}\Vert \nabla (u^{p/2})\Vert^2
\nonumber\\
&\leq & {C\left(\|f\|^{5/2}_{L^{5/2}}+\Vert f\Vert^2_{L^{5/2}}\right)\Vert v\Vert^2_{H^1}+C}.
\end{eqnarray}
\item
\underline{ $p=5/3$ and non-logistic case.} Then, from (\ref{B-4z}) we get
\begin{eqnarray}
\frac{1}{p}\|u\|^{2p}_{L^p}&=&\frac{3}{5}\|u^{5/6}\|^4\le\frac{3}{5}\left(C\|\nabla (u^{5/6})\|^{2}\|u^{5/6}\|^{2}_{L^{6/5}}+C\|u^{5/6}\|^4_{L^{6/5}}\right)\nonumber\\
&\le& C\left(\|\nabla (u^{5/6})\|^{2}\left(\int_\Omega u_{0}\right)^{5/3}+\left(\int_\Omega u_{0}\right)^{10/3}\right)
\le C\left(\|\nabla (u^{5/6})\|^{2}+1 \right).
\label{B-4zz}
\end{eqnarray}
Then, a linear combination between (\ref{B-20}) and (\ref{B-30}) (and using (\ref{B-4zz})) in order to absorb the term 
$\|\nabla u^{5/6}\|^{2}$, also implies the energy inequality (\ref{energy0}).

\item \underline{$p>1$ and logistic case.} Here we are only able to bound the energy considering the logistic problem (\ref{eq1b}). In this case, integrating in $\Omega$ the
$u$-equation, one has the estimates
$$
\Vert u(t,\cdot) \Vert_{L^1} \le \max\{\Vert u_0 \Vert_{L^1},\left(r/\mu\right)^{{1}/{p-1}} \vert \Omega \vert \}
\quad \forall t>0,$$
$$\Vert u\Vert_{L^p(Q)}\leq C.$$
Thus the term $\|u\|_{L^p}^{2p}$ can be controlled by  interpolation 
 as follows:
\begin{equation}\label{ps-110}
\|u\|_{L^p}^{2p}=\|u^{p/2}\|^4
\le C\|u^{p/2}\|_{L^{2(2p-1)/p}}^{2(2p-1)/p}\|u^{p/2}\|_{L^{2/p}}^{2/p}
= C(\|u_0\|_{L^{1}},r,\mu)\, \|u\|_{L^{2p-1}}^{2p-1} \, .
\end{equation}

\medskip

Consequently, using an adequate linear combination of (\ref{B-20}) and (\ref{B-30}) (taking into account (\ref{ps-110})), we can derive (see details in Subsection \ref{subsec:regularized})
\begin{eqnarray}
&&\frac{d}{dt}\left(\frac{1}{p(p-1)}\|u^{p/2}\|^2+\frac{1}{2p}\|v\|^2_{H^1}\right)
+\frac{4}{p^2}\|\nabla(u^{p/2})\|^2
+\frac Cp\|v\|^2_{H^2}
+\frac{\mu}{p-1}\|u\|_{L^{2p-1}}^{2p-1}\nonumber\\
&&\ \ \le \frac r{p-1}\|u\|^p_{L^p}+C\left(\|f\|^{5/2}_{L^{5/2}}+\Vert f\Vert^2_{L^{5/2}}\right)\|v\|^2_{H^1}+C.\label{ps-120} 
\end{eqnarray}
\end{enumerate}
Energy inequalities (\ref{energy0}) and (\ref{ps-120}) are the keys to prove the existence of global weak solutions.

\subsection{A regularized system}\label{subsec:regularized}
In order to prove Theorem \ref{teo1}, we will study the following family of regularized problems related to systems \eqref{eq1}-\eqref{eq2}.  Given $\varepsilon>0,$ find $({u}_\varepsilon,w_\varepsilon)$ nonnegative functions  such that: {\begin{equation}\label{eq1_aprox}
\left\{
\begin{array}{rcl}
\partial_tu_\varepsilon-\Delta u_\varepsilon
&=&\nabla\cdot(u_\varepsilon\nabla {v(w_\varepsilon}))
 +r\, u_\varepsilon-\mu \, u_\varepsilon^p
\ \  \mbox{in $Q$},\\
\partial_tw_\varepsilon-\Delta w_\varepsilon+w_\varepsilon
&=&u_\varepsilon^p+{f\, v(w_\varepsilon)_+\, 1_{\Omega_c}}\ \  \mbox{in $Q$}, \\
\left(u_\varepsilon(0),w_\varepsilon(0)\right)&=&\left(u_{0,\varepsilon},w_{0,\varepsilon}\right)\ \  \mbox{in $\Omega$},\\
 \partial_{\bf n} u_\varepsilon &=&\partial_{\bf n} w_\varepsilon=0\ \  \mbox{on $(0,T)\times\partial\Omega$},
\end{array}
\right.
\end{equation}
where $v(w_\varepsilon(t,\cdot))= v_\varepsilon(t,\cdot)$  is the unique solution of the elliptic problem
\begin{equation}\label{elip}
\left\{
\begin{array}{lc}
v_\varepsilon-\varepsilon\Delta v_\varepsilon=w_\varepsilon\quad \mbox{in}\ \Omega,\\
\partial_{\bf n}v_\varepsilon=0\quad \mbox{on}\ \partial\Omega.
\end{array}
\right.
\end{equation}
In (\ref{eq1_aprox}), $(u_{0,\varepsilon},w_{0,\varepsilon})$ are regular initial data satisfying 
\begin{equation}\label{elip1}
\left\{
\begin{array}{lc}
u_{0,\varepsilon}\in L^{{\infty}-}(\Omega),\ u_{0,\varepsilon}\ge 0,
\\
u_{0,\varepsilon}\rightarrow u_0\ \mbox{in}\ L^p(\Omega),\ \mbox{as}\ \varepsilon\rightarrow 0,\\
v_{0,\varepsilon}\in W^{2+6/5,5/2}_{\bf n}(\Omega),\ 
v_{0,\varepsilon}\rightarrow v_0\ \mbox{in}\ H^{1}(\Omega),\ \mbox{as}\ \varepsilon\rightarrow 0,
\\ 
w_{0,\varepsilon}=v_{0,\varepsilon}-\varepsilon\Delta v_{0,\varepsilon}\in {W}^{6/5,5/2}(\Omega).
\end{array}
\right.
\end{equation}

\begin{proposition}[Strong solutions of  \eqref{eq1_aprox}-\eqref{elip1}]\label{solution_regu}
Let $u_{0,\varepsilon},v_{0,\varepsilon},w_{0,\varepsilon}$ satisfying  \eqref{elip1} and $f\in L^{5/2}(Q_c)$. There exists a unique solution $(u_\varepsilon,w_\varepsilon)\in A\times X_{5/2}$ of regularized problem \eqref{eq1_aprox} in $(0,T)$ with $u_\varepsilon\ge0$ a.e.\ in $Q,$ where $A:=\{ u\in L^{\infty-}(Q):\ \nabla u\in L^2(Q),\ \partial_t u\in L^2((H^1)')\}$.
\end{proposition}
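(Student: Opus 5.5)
We will prove Proposition \ref{solution_regu} with a Leray--Schauder fixed point argument, which gives a local/global strong solution once we have $\varepsilon$-dependent a priori estimates. Positivity and uniqueness are handled separately afterwards.

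\textbf{Step 1 (the operator).} For a given $\bar u$ in a suitable space, e.g.\ $\bar u\in L^{r}(Q)$ for large $r$, we first solve the linear $w$-problem
\begin{equation*}
\partial_t w-\Delta w+w=\bar u_+^p+f\,v(w)_+1_{\Omega_c}.
\end{equation*}
The elliptic regularization (\ref{elip}) gives $\|v(w)\|_{W^{2,s}}\le C_\varepsilon\|w\|_{L^s}$. Hence, on short time intervals, the term $f\,v(w)_+$ lies in $L^{5/2}(Q)$ and is Lipschitz in $w$: we use $f\in L^{5/2}(Q_c)$ together with $v(w)\in L^\infty$, which follows from $w\in X_{5/2}\subset L^\infty(L^{\infty-})$ and elliptic regularity. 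Parabolic maximal regularity in $X_{5/2}$ (with $w_{0,\varepsilon}\in W^{6/5,5/2}$) plus a contraction argument iterated in time then yields a unique $w\in X_{5/2}$. By Lemma \ref{An}, $\nabla v(w)\in L^\infty(W^{1,\infty-})\subset L^\infty(Q)$, and this is the key gain from the regularization. Next we solve the linear $u$-problem
\begin{equation*}
\partial_t u-\Delta u-\nabla\cdot(u\nabla v(w))=ru-\mu\bar u_+^{p-1}u
\end{equation*}
with data $u_{0,\varepsilon}$. It has a bounded drift, so Galerkin/energy methods give $u\in A$. Testing with $u^{q-1}$ and using $\nabla v\in L^\infty$ gives $L^\infty(L^q)$ bounds for every $q<\infty$. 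This defines $R(\bar u)=u$, and $R$ is compact by Aubin--Lions since $\nabla u\in L^2$ and $\partial_t u\in L^2((H^1)')$.

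\textbf{Step 2 (nonnegativity).} Test the $u$-equation with $u_-$. The drift term is controlled by $\|\nabla v\|_{L^\infty}$, and Gronwall gives $u_-=0$. Then $u^p=|u|^p$, so the cutoffs become inactive at the fixed point. The cutoff $v(w)_+$ is included precisely so that positivity of $w$ is not needed.

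\textbf{Step 3 (uniform estimates for $\lambda R$).} For the Leray--Schauder scheme we need bounds independent of $\lambda\in[0,1]$ on fixed points $u=\lambda R(u)$. We would first use the $L^1$ bound from integrating the $u$-equation and the $L^p$-energy computation of Subsection \ref{Sec:FormalComput}, adapted to $w$ and $v(w)$. Then we use the $\varepsilon$-smoothing to get $\nabla v\in L^\infty$ in terms of $\|w\|_{L^\infty(L^{q})}$, and bootstrap through $L^q$ estimates of $u$ and the $X_{5/2}$-estimate for $w$.

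This is where I expect the main obstacle: closing the loop between $u^p$ feeding $w$ and $\nabla v(w)$ feeding $u$ with only $\varepsilon$-dependent but $\lambda$-independent constants. I would attempt a Gronwall argument on $\|u\|_{L^q}^q+\|w\|_{L^{q'}}^{q'}$, using $\|\nabla v\|_{L^\infty}\le C_\varepsilon\|w\|_{L^{s}}$ for $s>3$. For large $p$, I would lean on the logistic absorption $-\mu u^{p+q-1}$ to control the production $u^p$ in the $w$-estimate.

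\textbf{Step 4 (uniqueness).} Take the difference of two solutions. Test the $u$-difference with itself and estimate the $w$-difference in $L^2$, using the Lipschitz bounds $|a^p-b^p|\le C(|a|^{p-1}+|b|^{p-1})|a-b|$ with $u\in L^{\infty-}$, and $\nabla v\in L^\infty$. Gronwall then concludes.
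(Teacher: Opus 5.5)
Your Steps 1, 2 and 4 are sound. Defining the map on $\bar u$ alone and solving the $w$-problem by a short-time contraction is a legitimate variant of the paper's decoupled map on $(\bar u,\bar w)$. Your Step 4 also supplies the uniqueness argument, which the paper states but never proves.

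\textbf{The gap is in Step 3.} This step is the whole content of the Leray--Schauder argument, and you leave it open. The mechanism you suggest for closing it does not work as you set it up:
\begin{itemize}
\item In the $u^{q-1}$-test the drift gives $C\|\nabla v\|_{L^\infty}^2\|u\|_{L^q}^q\le C_\varepsilon\|w\|_{L^s}^2\|u\|_{L^q}^q$, which is a product of the two unknowns.
\item The $w$-estimate is fed by $\int_\Omega u^p|w|^{q'-1}$ with $p>1$.
\item The resulting inequality for $\|u\|_{L^q}^q+\|w\|_{L^{q'}}^{q'}$ is therefore superlinear. Gronwall then gives bounds only on a short, data-dependent time interval, not on all of $(0,T)$.
\item Logistic absorption can tame $u^p$ in the $w$-estimate, but it does not remove the superlinearity coming from the drift. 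In the non-logistic case it is not available at all.
\end{itemize}

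\textbf{The missing idea.} The energy computation of Subsection \ref{Sec:FormalComput}, which you mention only in passing, already closes the loop once you carry it out on the regularized system.
\begin{itemize}
\item Test the $u$-equation by $\frac{1}{p-1}u^{p-1}$ and the $w$-equation by $-\frac1p\Delta v$, with $w=v-\varepsilon\Delta v$. The chemotaxis and production terms then cancel exactly.
\item The regularization produces the extra terms $\frac{\varepsilon}{2p}\frac{d}{dt}\|\Delta v\|^2+\frac{\varepsilon}{p}\|\nabla\Delta v\|^2$; see \eqref{B-2-1}.
\item The control term is absorbed using $\|v\|_{L^{10}}\le C\|v\|_{H^1}^{4/5}\|v\|_{H^2}^{1/5}$.
\item The equation for $\int_\Omega v$ requires a bound on $\|u\|_{L^p}^{2p}$. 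This is exactly where $p\le 5/3$ (Gagliardo--Nirenberg plus the $L^1$ bound) or the logistic $L^{2p-1}$-dissipation enters. Your proposal never locates this restriction.
\end{itemize}

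Gronwall then gives $v\in L^\infty(H^2)\cap L^2(H^3)$. Hence $w$ is bounded in $W_2$ and $\nabla v$ is bounded in $L^\infty(L^6)$, with constants depending on $\varepsilon$ but not on $\lambda$.

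This is enough, and no $L^\infty$ bound on the drift is needed:
\begin{itemize}
\item With $\nabla v\in L^\infty(L^6)$ and $\|z\|_{L^3}\le C\|z\|^{1/2}\|z\|_{H^1}^{1/2}$, the $u^{q-1}$-test yields $\frac{d}{dt}\|u^{q/2}\|^2+\|u^{q/2}\|_{H^1}^2\le C(1+\|\nabla v\|_{L^6}^4)\|u^{q/2}\|^2$ for every $q<\infty$.
\item This inequality is linear, so it gives $\lambda$-independent bounds on all of $(0,T)$.
\item The right-hand side of the $w$-equation is then bounded in $L^{5/2}(Q)$, which gives the $X_{5/2}$-bound for $w$.
\end{itemize}
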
 
\begin{proof}
We will use the Leray-Schauder fixed-point theorem. We recall the space $W_2=L ^\infty(L^2)\cap L^2(H^1),$ and consider the operator
$$
\mathcal{L}:L^{\infty-}(Q)\times W_2\to A\times X_{5/2}\hookrightarrow L^{\infty-}(Q)\times W_2,
$$
defined by $\mathcal{L}(\overline{u}_\varepsilon,\overline{w}_\varepsilon)=(u_\varepsilon,w_\varepsilon)$, where $(u_\varepsilon,w_\varepsilon)$ is the solution of the linear (decoupled) problem 
\begin{equation}\label{A1}
\left\{
\begin{array}{l}
\partial_tu_\varepsilon-\Delta u_\varepsilon
-\nabla\cdot({u}_{\varepsilon+}\nabla v(\overline{w}_\varepsilon))
-r\, u_{\varepsilon}+\mu \,  (\overline u_{\varepsilon +})^{p-1}{u_{\varepsilon}}=0
\ \mbox{ in }\ Q,\\
\partial_tw_\varepsilon-\Delta w_\varepsilon+w_\varepsilon
=  (\overline u_{\varepsilon +} )^{p-1}{u_{\varepsilon}}
+f v(\overline{w}_{\varepsilon })_+\,1_{\Omega_c}\ \mbox{ in }\ Q,\\
u_\varepsilon(0)=u_{0, \varepsilon},\ w_\varepsilon(0)=
w_{0,\varepsilon}\ \mbox{ in }\ \Omega,\\
{\partial_{\bf n} u_\varepsilon}=0,\ {\partial_{\bf n} w_\varepsilon}=0\ \mbox{ on }\ (0,T)\times\partial\Omega,
\end{array}
\right.
\end{equation}
where $v(\overline{w}_\varepsilon)$ 
is the unique solution  of problem \eqref{elip} (changing $w_\varepsilon$ by $\overline w_\varepsilon$),
 and $\overline{u}_{\varepsilon+}$ 
denotes the positive part of $\overline{u}_\varepsilon$.
 
\

{\it  \underline{Step 1:} The operator $\mathcal{L}$ is well-defined, and  $A\times X_{5/2}$ is compactly embedded in $L^{\infty-}(Q)\times W_2$.}
\vspace{0.1cm}

Let $(\overline{u}_\varepsilon,\overline{w}_\varepsilon)\in L^{\infty-}(Q)\times W_2$; 
then, from the $H^2$ and $H^3$-regularity of linear problem 
(\ref{elip})
we deduce that $\overline{v}_\varepsilon:=v(\overline{w}_\varepsilon) \in L^\infty(H^2)\cap L^2(H^3)$. 
Thus, we have that $\nabla \overline{v}_\varepsilon\in L^\infty(H^1)\cap L^2(H^2)\hookrightarrow L^{10}(Q);$
hence, using that $\overline{u}_\varepsilon\in L^{\infty-}(Q),$ from Lemma \ref{lem-B}, for any $2\le q<\infty$, there exists
a unique weak solution $u_\varepsilon\in A$ of problem \eqref{A1}$_1$ and satisfies  the following estimate:
\begin{equation}\label{A-1-1}
\|u_\varepsilon\|_{A}\le C\left(\|u_{0,\varepsilon}\|_{L^{\infty-}},\|\overline{u}_\varepsilon\|_{L^{\infty-}(Q)},\|\overline{w}_\varepsilon\|_{W_2}\right).
\end{equation}
Moreover, $u_\varepsilon\ge 0$ a.e.\ in $Q$. Indeed, by testing \eqref{A1}$_1$ by $u_{\varepsilon-}:=\min\{u_\varepsilon,0\}\in  L^{\infty-}(Q)$, 
and taking into account that $u_{\varepsilon-}=0$ if $u_\varepsilon\ge0$, $\nabla u_{\varepsilon-}=\nabla u_\varepsilon$ if $u_\varepsilon\le 0$, and $\nabla u_{\varepsilon-}=0$ if $u_\varepsilon>0$, we have
$$
\frac12\frac{d}{dt}\|u_{\varepsilon-}\|^2+\|\nabla u_{\varepsilon-}\|^2
+\mu\Vert (\overline u_{\varepsilon+})^{p-1}u^2_{\varepsilon-}\Vert_{L^1}
\le \, r\, \|u_{\varepsilon-}\|^2,
$$
which together with $u_{0,\varepsilon}\ge 0$ a.e. in $\Omega$, implies that $u_{\varepsilon-}\equiv0$; consequently $u_\varepsilon\ge0$ a.e. in $Q$.

Now, using that $v(\overline{w}_\varepsilon)\in L^\infty(H^2)\subset L^\infty(Q),$ and $u_\varepsilon\in A\subset L^{\infty-}$ we have $(\overline u_{\varepsilon +} )^{p-1}{u_{\varepsilon}}+f {v(\overline{w}_\varepsilon})_+\,1_{\Omega_c}\in L^{5/2}(Q)$. 
Then, from parabolic regularity there exists a unique solution $w_\varepsilon\in X_{5/2}$ of \eqref{A1}$_2$ such that
\begin{equation}\label{A-2}
\|w_\varepsilon\|_{X_{5/2}}\le C\left(\|w_{0,\varepsilon}\|_{W^{6/5,5/2}},
\|\overline{u}_\varepsilon\|_{L^{\infty-}(Q)},
\|{u}_\varepsilon\|_{A},\|\overline{w}_\varepsilon\|_{W_2},\|f\|_{L^{5/2}(Q_c)}\right).
\end{equation}

Therefore, the operator $\mathcal{L}$ is well-defined from $L^{\infty-}(Q)\times W_2$ in $A\times X_{5/2}$, and from \eqref{A-1-1}-\eqref{A-2}, it maps  bounded sets of $L^{\infty-}(Q)\times W_2$ into bounded sets of $A\times X_{5/2}$. On the other hand, the product space $A\times X_{5/2}$ is compactly embedded in $L^{\infty-}(Q)\times W_2$ as a direct consequence of \cite[Corollary 4]{Simon} and \cite[Th\'eor\`eme 5.1]{Lions}.
\vspace{0.1cm}

{\it \underline{Step 2:} The possible fixed points $(u_\varepsilon,w_\varepsilon)$ of $\lambda \mathcal{L}$ are bounded in $L^{\infty-}(Q)\times W_2$, independently of the parameter $\lambda\in[0,1]$.
}

\vspace{0.1cm}

Let $\lambda\in(0,1]$ (the case $\lambda=0$ is clear). We observe that if $(u_\varepsilon,w_\varepsilon)$ is a fixed point of $\lambda \mathcal{L}$, then the pair $(u_\varepsilon,w_\varepsilon)$ belongs to $A\times X_{5/2}$, $u_\varepsilon\ge0$,  and the following problem holds  a.e. in $Q$: 
\begin{equation}\label{B}
\left\{
\begin{array}{l}
\partial_tu_\varepsilon-\Delta u_\varepsilon
-\nabla\cdot({u}_{\varepsilon+}\nabla v_\varepsilon)
-r\,  u_{\varepsilon} +\,\mu \,   u_{\varepsilon }^{p}=0
\ \mbox{ in }\ Q,\\
\partial_tw_\varepsilon-\Delta w_\varepsilon+w_\varepsilon
=  u_{\varepsilon}^{p}
+\lambda f\, {v}(w_{\varepsilon})_+\,1_{\Omega_c}\ \mbox{ in }\ Q,\\
u_\varepsilon(0)
= u_{0,\varepsilon},\ w_\varepsilon(0)
=w_{0,\varepsilon}\ \mbox{ in }\ \Omega,\\
{\partial_{\bf n} u_\varepsilon}=0,\ {\partial_{\bf n} w_\varepsilon}=0\ \mbox{ on }\  (0,T)\times\partial\Omega.
\end{array}
\right.
\end{equation}

On the other hand, integrating in $\Omega$ equation \eqref{B}$_1$, 
we obtain
\begin{equation}\label{B-1}
\frac{d}{dt} \|u_{\varepsilon}(t)\|_{L^1}
+  \mu\, \|u_{\varepsilon}(t)\|_{L^p}^p 
=
r\, \|u_{\varepsilon}(t)\|_{L^1} 
\ \ \forall t>0.
\end{equation}
By bounding from below $\|u_{\varepsilon}(t)\|_{L^p}^p \ge |\Omega| ^{p-1}
\|u_{\varepsilon}(t)\|_{L^1}^p$ in \eqref{B-1}, it can be deduced that $u_\varepsilon$ is bounded in $L^\infty(L^1)$ by a standard comparison argument. In fact, one has 
\begin{equation}\label{L1-u}
\Vert u_\varepsilon(t,\cdot) \Vert_{L^1} \le K_0, 
\quad \hbox{where} \quad 
K_0=
\left\{
\begin{array}{ll}
  \Vert u_{0,\varepsilon} \Vert_{L^1} &  \hbox{if $r=\mu=0$,}    \\
  \max\{\Vert u_{0,\varepsilon} \Vert_{L^1},\left(r/\mu\right)^{{1}/{(p-1)}} \vert \Omega \vert \} &    \hbox{if $r,\mu>0$.} 
\end{array}
\right.
\end{equation}

Now, we can prove rigorously  the energy estimates given (formally) in Subsection \ref{Sec:FormalComput}. Indeed, by adding \eqref{B}$_1$ by $\frac{1}{p-1}u_\varepsilon^{p-1}$ and \eqref{B}$_2$ (rewritten in terms of $ v_\varepsilon$) by $-\frac1p\Delta v_\varepsilon,$ then chemotaxis and production terms cancel; 
integrating by parts, using the 3D interpolation inequality $\|v\|_{L^{10}}\le C\|v\|^{4/5}_{H^1}\|v\|^{1/5}_{H^2}$
and  the H\"older and Young inequalities  
we can obtain 
\begin{eqnarray}
&&\frac{d}{dt}\left(\frac{1}{p(p-1)}\|u_\varepsilon^{p/2}\|^2+\frac{1}{2p}\|\nabla v_\varepsilon\|^2
+\frac{\varepsilon}{2p}\|\Delta v_\varepsilon\|^2\right)+\frac{4}{p^2}\|\nabla(u_\varepsilon^{p/2})\|^2+\frac1p\|\Delta v_\varepsilon\|^2\nonumber\\
&&\ \ +\frac1p\|\nabla v_\varepsilon\|^2+ \frac{\varepsilon}{p}\|\Delta v_\varepsilon\|^2+\frac{\varepsilon}{p}\|\nabla(\Delta v_\varepsilon)\|^2
+ \frac{\mu }{p-1} \| u_\varepsilon \|_{L^{2p-1}}^{2p-1}
\nonumber\\
&&
\le {\frac{\lambda}{p}\|f\|_{L^{5/2}}\|v_\varepsilon\|_{L^{10}}\|\Delta v_\varepsilon\|}
+ \frac{ r }{p-1} \| u_\varepsilon \|_{L^{p}}^{p}
\nonumber\\
&&\le {\frac{C}{p}\|f\|_{L^{5/2}}\|v_\varepsilon\|^{4/5}_{H^{1}}\|v_\varepsilon\|^{1/5}_{H^{2}}\|\Delta v_\varepsilon\|}
+ \frac{ r }{p-1} \| u_\varepsilon ^{p/2} \|^{2}
 \nonumber\\
&&\le C_\delta \|f\|^{5/2}_{L^{5/2}}\|v_\varepsilon\|^2_{H^1}+ 
\delta\, \|v_\varepsilon\|^2_{H^2}
+ \frac{ r }{p-1} \| u_\varepsilon ^{p/2} \|^{2} , \label{B-2-1}
\end{eqnarray}
for any $\delta>0.$ 
Also, integrating \eqref{B}$_2$ in $\Omega$, then multiplying the resulting equation by $\frac1p\int_\Omega v_\varepsilon$ and applying the H\"older and Young inequalities, we can obtain
\begin{eqnarray}
\frac{1}{2p}\frac{d}{dt}\left(\int_\Omega v_\varepsilon\right)^2+\frac{1}{2p}\left(\int_\Omega v_\varepsilon\right)^2
&\le&  \frac{\delta}{p}\left(\int_\Omega u_\varepsilon^p\right)^2
+C_\delta \frac{\lambda^2}{p} \|f\|^2_{L^{5/2}} \|v_\varepsilon\|^2_{L^{5/3}}
\nonumber\\
&\le&\frac{\delta}{p}\|u_\varepsilon\|^{2p}_{L^p}
+\frac {C_\delta} p \|f\|^2_{L^{5/2}}\|v_\varepsilon\|^2_{H^1}.\label{B-3}
\end{eqnarray}
At this point, we split into three  cases $p\in (1,5/3)$ and  $p=5/3$ in the non-logistic problem,  and $p>1$ in the logistic case.

\

{\it \underline{Case $p\in (1,5/3)$ and non-logistic problem.}}

From the Gagliardo-Nirenberg interpolation inequality  we have
\begin{equation}\label{B-4}
\begin{array}{rcl}
\|u_\varepsilon\|^{2p}_{L^p}=\|u_\varepsilon^{p/2}\|^4
&\le& C
\left(\|\nabla u_\varepsilon^{p/2}\|^{12(p-1)/(3p-1)}\|u_\varepsilon^{p/2}\|^{8/(3p-1)}_{L^{2/p}}+\|u_\varepsilon^{p/2}\|^4_{L^{2/p}}\right)\\
& \le &
C \left(\|\nabla u_\varepsilon^{p/2}\|^{12(p-1)/(3p-1)} K_0^{4p/(3p-1)} + K_0^{2p}\right).
\end{array}
\end{equation}

Since  $p\in (1,5/3)$, then $12(p-1)/(3p-1)<2;$ 
thus, we can apply the Young inequality on the right-hand side of \eqref{B-4}, and use \eqref{L1-u} in order to obtain 
\begin{eqnarray}
\frac{1}{p}\|u_\varepsilon\|^{2p}_{L^p}
&\le&\frac{2}{p^2}\|\nabla u_\varepsilon^{p/2}\|^2+C\|u_\varepsilon^{p/2}\|^{8/(5-3p)}_{L^{2/p}}
+C\|u_\varepsilon^{p/2}\|^4_{L^{2/p}}\nonumber\\
&=&\frac{2}{p^2}\|\nabla u_\varepsilon^{p/2}\|^2
+C\, K_0^{8/(5-3p)}
+C\, K_0^4.\label{B-5}
\end{eqnarray}
Consequently, from \eqref{B-3} (taking $\delta=1$) and \eqref{B-5} we arrive at
\begin{equation} \label{B-6}
\frac{1}{2p}\frac{d}{dt}\left(\int_\Omega v_\varepsilon\right)^2+\frac{1}{2p}\left(\int_\Omega v_\varepsilon\right)^2
\le\frac{2}{p^2}\|\nabla u_\varepsilon^{p/2}\|^2
+C\, K_0^{8/(5-3p)}
+C\, K_0^4
+ C\, \|f\|^2_{L^{5/2}}\|v_\varepsilon\|^2_{H^1}.
\end{equation}
Hence, adding \eqref{B-2-1} and \eqref{B-6}, and  considering the equivalent norms given in (\ref{norm-1}) and (\ref{norm-2}),  we can obtain 
\begin{eqnarray}
&&\frac{d}{dt}\left(\frac{1}{p(p-1)}\|u_\varepsilon^{p/2}\|^2
+{\frac{1}{2p}\|v_\varepsilon\|^2_{H^1}}
+\frac{\varepsilon}{2p}\|\Delta v_\varepsilon\|^2\right)
+\frac{2}{p^2}\|\nabla(u_\varepsilon^{p/2})\|^2
+\frac{1}{2p}\|v_\varepsilon\|^2_{H^2}
\nonumber\\
&&\ \ 
+\frac{\varepsilon}{p}\|\Delta v_\varepsilon\|_{H^1}^2
\le
C\, K_0^{8/(5-3p)}
+C\, K_0^4
 +{C\left(\|f\|^{5/2}_{L^{5/2}}+\|f\|^2_{L^{5/2}}\right)\|v_\varepsilon\|^2_{H^1}}.\label{B-7}
\end{eqnarray}

Thus, from \eqref{B-7} and the Gronwall lemma we deduce that there exists a positive constant $K_\varepsilon:=K_\varepsilon(T,\|u_{0,\varepsilon}^{p/2}\|,\|v_{0,\varepsilon}\|_{H^2},{\|f\|_{L^{5/2}(Q)}})$ such that
$
\|u_\varepsilon^{p/2}\|_{W_2}\le K_\varepsilon
$
and 
$
\|v_\varepsilon\|_{L^\infty(H^2)\cap L^2(H^3)}\le K_\varepsilon.
$
In particular, $w_\varepsilon$ is bounded in $W_2$ (with respect to $\lambda$).

\

{\it \underline{Case $p=5/3$ and non-logistic problem.}}
 
From (\ref{B-4}) we get
\begin{eqnarray}
\|u_\varepsilon\|^{10/3}_{L^{5/3}}
&=&
\|u_\varepsilon^{5/6}\|^4
\le
C\left(\|\nabla (u_\varepsilon^{5/6})\|^{2}\|u_\varepsilon^{5/6}\|^{2}_{L^{6/5}}
+\|u_\varepsilon^{5/6}\|^4_{L^{6/5}}\right)\nonumber\\
&\le& C\left(\|\nabla (u_\varepsilon^{5/6})\|^{2}K_0^{5/3}+K_0^{10/3}\right).\label{B-4zz3}
\end{eqnarray}
Then, by adding (\ref{B-2-1}) and (\ref{B-3})  (taking $\delta$ small enough), and using (\ref{B-4zz3}), we deduce an energy inequality similar to (\ref{B-7}).

\

{\it \underline{Case $p>1$ and logistic problem}.} 

 In this case, 
the diffusion is not enough to control the norm $\|u_\varepsilon\|_{L^p}^{2p}$. In order to overcome this difficulty, we use $L^s$-interpolation taking into account the presence of the $L^{2p-1}$-norm coming from the logistic term. In fact, interpolating the $L^{2p}$-norm between in $L^1(\Omega)$ and $L^{2p-1}(\Omega)$, and using the Young inequality, we arrive at
\begin{equation}
 \|u_\varepsilon\|_{L^p}^{2p}
\le \|u_\varepsilon\|_{L^{2p-1}}^{2p-1}\|u_{0, \varepsilon}\|_{L^{1}}
\le C\|u_\varepsilon\|_{L^{2p-1}}^{2p-1}.\label{Bb-4}
\end{equation}
Consequently, by adding  (\ref{B-2-1}) and (\ref{B-3})  (taking $\delta$ small enough) 
considering the equivalent norms given in (\ref{norm-1}) and (\ref{norm-2}), 
and using estimate $(\ref{Bb-4})$, 
we can absorbe the term $\|u_\varepsilon\|_{L^{2p-1}}^{2p-1}$ and arrive at
\begin{eqnarray}
&&\frac{d}{dt}\left(\frac{1}{p(p-1)}\|u_\varepsilon^{p/2}\|^2
+\frac{1}{2p} 
\| v_\varepsilon\|_{H^1}^2
+\frac{\varepsilon}{2p}\|\Delta v_\varepsilon\|^2\right)\nonumber\\
&&\ \ +\frac{4}{p^2}\|\nabla(u_\varepsilon^{p/2})\|^2
+ \left( \frac\mu{p-1}-\frac\delta p \widetilde C \right)\|u_\varepsilon\|_{L^{2p-1}}^{2p-1}
+{\frac{C}{2p}\|v_\varepsilon\|^2_{H^2}}
+\frac{\varepsilon}{p}\|\Delta v_\varepsilon\|_{H^1}^2\nonumber\\
&&\ \ \le r\|u_\varepsilon\|^p_{L^p}
+{C\left(\|f\|^{5/2}_{L^{5/2}}+\|f\|^2_{L^{5/2}}\right)\|v_\varepsilon\|^2_{H^1}}+C.
\label{Bb-5} 
\end{eqnarray}
Then, 
from the Gronwall lemma,  there exists 
$\widetilde{K}_\varepsilon:=\widetilde{K}_\varepsilon\left(T,\|u_{0,\varepsilon}\|_{L^p},\|v_{0,\varepsilon}\|_{H^2},\|f\|_{L^{5/2}(Q_c)}\right)>0$ such that
$
\|v_\varepsilon\|_{L^\infty(H^2)\cap L^2(H^3)}\le \widetilde{K}_\varepsilon.
$
In particular, $w_\varepsilon$ is bounded in $W_2$ (with respect to $\lambda$).

\

Now, we are going to get the bound for $u_\varepsilon$ in $L^{\infty-}(Q)$.  Indeed, by testing \eqref{B}$_1$ by $u^{q-1}_\varepsilon,$ $2\leq q<\infty$, considering the 3D interpolation inequality $\|u\|_{L^3}\le C\|u\|^{1/2}\|u\|^{1/2}_{H^1}$, and applying the H\"older and Young inequalities,  we have
\begin{eqnarray*}
&&\!\frac1q\frac{d}{dt}\|u_\varepsilon^{q/2}\|^2
\!+\!\frac{4(q-1)}{q^2}\|\nabla (u_\varepsilon^{q/2})\|^2
\!+\!\mu\|u_\varepsilon\|^{p+q-1}_{L^{p+q-1}}\!=\!\frac{-2(q-1)}{q}\!\int_\Omega u_\varepsilon^{q/2}\nabla v_\varepsilon\!\cdot\! \nabla(u_\varepsilon^{q/2})\!+\!r\Vert u_\varepsilon^{q/2}\Vert^2\nonumber\\
&&\ \  \leq \frac{2(q-1)}{q}\Vert u_\varepsilon^{q/2}\Vert_{L^3}\Vert \nabla v_\varepsilon\Vert_{L^6}\Vert \nabla (u_\varepsilon^{q/2})\Vert+\!r\Vert u_\varepsilon^{q/2}\Vert^2\\
&&\ \ \leq \frac{2(q-1)}{q^2}\Vert u_\varepsilon^{q/2}\Vert^2_{H^1}+C\Vert u_\varepsilon^{q/2}\Vert^2 \Vert \nabla v_\varepsilon\Vert^4_{L^6}+\!r\Vert u_\varepsilon^{q/2}\Vert^2.\label{B-8}
\end{eqnarray*}
Adding $\frac{2(q-1)}{q^2}\|u^{q/2}_\varepsilon\|^2$ to both sides,  and using that $v_\varepsilon$ is bounded in $L^\infty(H^2)$ (in particular $\nabla v_\varepsilon$ in $L^\infty(L^6)$), we have
\begin{equation*}\label{B-9}
\frac{d}{dt}\|u^{q/2}_\varepsilon\|^2+\|u^{q/2}_\varepsilon\|^2_{H^1}\le C_\varepsilon\|u^{q/2}_\varepsilon\|^2 .
\end{equation*}
Thus, from the Gronwall lemma, $u^{q/2}_\varepsilon$ is bounded in $W_2,$ for any $2\leq q<\infty$; hence in particular $u_\varepsilon$ is bounded in $A$. Therefore, the possible fixed points of $\lambda \mathcal{L}$ are bounded in $L^{\infty-}(Q)\times W_2$, independently of the parameter $\lambda>0$. Therefore, from Steps 1 and 2, the operator $\mathcal{L}$ satisfies the assumptions of the Leray-Schauder fixed point theorem. Hence, the mapping $\mathcal{L}$ has a fixed point $(u_\varepsilon,w_\varepsilon)\in A\times X_{5/2}$, which is a solution of problem \eqref{eq1_aprox}. 
\end{proof}

\subsection{Existence of weak solution (Proof of Theorem \ref{teo1}).} \label{Se:3.1}

\

The proof of Theorem \ref{teo1}   will be deduced  as  limit of the regularized system  \eqref{eq1_aprox} when $\varepsilon$ goes to $0$. 
 By energy inequality \eqref{B-7} (for $p\in (1,5/3]$ and non-logistic problem) or \eqref{Bb-5} (for $p>1$ and logistic problem),   the following bounds independent of $\varepsilon>0$ hold from the Gronwall Lemma:
\begin{equation}\label{P-1}
\left\{
\begin{array}{l}
\{ u_\varepsilon^{p/2}\}_{\varepsilon>0} \hbox{ in } L^\infty(L^2)\cap L^2(H^1)\hookrightarrow L^{10/3}(Q),\\
\{ v_\varepsilon\}_{\varepsilon>0} \hbox{ in } L^\infty(H^1)\cap L^2(H^2)\hookrightarrow L^{10}(Q),\\
\{ \sqrt\varepsilon\Delta v_\varepsilon \}_{\varepsilon>0} \hbox{ in } L^\infty(L^2)\cap L^2(H^1),\\
\{ u_\varepsilon\}_{\varepsilon>0} \hbox{ in } L^{2p-1}(Q)\ \mbox{(in the logistic case)}.
\end{array}
\right.
\end{equation}
Then, from \eqref{P-1}$_1$ we deduce that $\{u_\varepsilon^p\}_{\varepsilon>0}\mbox{ is bounded in }L^{5/3}(Q)$; which implies that 
\begin{equation}\label{P-2}
\{u_\varepsilon\}_{\varepsilon>0}\mbox{ is bounded in }L^{5p/3}(Q).
\end{equation}
In order to deduce a bound for $\nabla u_\varepsilon$, we consider two cases:

{\it \underline{Case $p\le 2$}.} 

By using the equality $\nabla u_\varepsilon=\frac2p u_\varepsilon^{1-p/2}\nabla(u_\varepsilon^{p/2})$ and the 
bounds $\nabla(u_\varepsilon^{p/2})$ in $L^2(Q)$ and \eqref{P-2},
  we have that $\nabla u_\varepsilon$ is bounded in $L^{5p/(3+p)}(Q)$.

{\it \underline{Case $p> 2$}.} 

In this case, we are going to estimate $u_\varepsilon^{q/2}$ in $W_2$ for $q\le q(p)=5p/6$. Indeed, by testing $u_\varepsilon$-equation by $u_\varepsilon^{q-1}$ and bounding the chemotaxis term as 
$$
\int_\Omega u_\varepsilon\nabla v_\varepsilon\cdot \nabla (u_\varepsilon^{q-1})
=- C(q) \int_\Omega u_\varepsilon^q\Delta v_\varepsilon
\le  C\, \|u_\varepsilon\|_{L^{2q}}^q \|\Delta v_\varepsilon\|
$$
which is bounded in $L^1(0,T)$ using that $\Delta v_\varepsilon$ is bounded in $L^2(Q)$ and $u_\varepsilon$ in $L^{2q}(Q)$ (recall that $q\le 5p/6$). Now we distinguish the case $2<p<12/5,$ where we take $q(p)<2;$ and the case $p\ge 12/5,$ where since $q(p)\ge 2,$ we will take  $q=2$ (hence $\nabla u_\varepsilon$ is bounded in $L^2(Q)$).  In the first case $2<p<12/5$,  using again the equality $\nabla u_\varepsilon=\frac2q u_\varepsilon^{1-q/2}\nabla(u_\varepsilon^{q/2})$, one can now deduce that $\nabla u_\varepsilon$ is bounded in $L^{25p/(18+5p)}(Q)$. In summary, 
one has
\begin{equation}\label{P-3}
\{\nabla u_\varepsilon\}_{\varepsilon>0}\mbox{ is bounded in }L^{\gamma(p)}(Q),
\end{equation}
where $\gamma(p)$ is defined in \eqref{st-11-bis}. In any case, from \eqref{P-1}$_2$, one has that $\nabla v_\varepsilon$ is bounded in $L^\infty(L^2)\cap L^2(H^1)\hookrightarrow L^{10/3}(Q)$; which joint with \eqref{P-2} imply that
\begin{equation}\label{P-4}
\{u_\varepsilon\nabla v_\varepsilon\}_{\varepsilon>0}\mbox{ is bounded in }L^{10p/(6+3p)}(Q).
\end{equation}
Now, from \eqref{eq1_aprox}$_1$ and the bounds \eqref{P-2}-\eqref{P-4} we can deduce that 
\begin{equation}\label{P-5}
\{\partial_tu_\varepsilon\}_{\varepsilon>0}\mbox{ is bounded in }
(L^{\mu(p)}(W^{1,10p/(7p-6)}))',
\end{equation}
where $\mu(p)$ is defined by
\begin{equation} \label{mu(p)}
\mu(p)=
\left\{ 
\begin{array}{ll}
10p/(7p-6) & \hbox{if $p\in (1,2]$,}\\
 5/2 & \hbox{if $p\in (2,+\infty)$.}
\end{array}
\right.
\end{equation} 
 
On the other hand, from \eqref{P-1}$_{2,3}$ and taking into account that $w_\varepsilon=v_\varepsilon-\varepsilon\Delta v_\varepsilon$, we have 
\begin{equation}\label{P-6}
\{w_\varepsilon\}_{\varepsilon>0}\mbox{ is bounded in }L^\infty(L^2)\cap L^2(H^1).
\end{equation}
Also, considering \eqref{eq1_aprox}$_2$, \eqref{P-1}$_2$, \eqref{P-2} and \eqref{P-6} we conclude that
\begin{equation}\label{P-7}
\{\partial_tw_\varepsilon\}_{\varepsilon>0}\mbox{ is bounded in }L^{5/3}((H^1)').
\end{equation}
Now, we observe that using the equality $w_\varepsilon=v_\varepsilon-\varepsilon\Delta v_\varepsilon$ and \eqref{P-1}$_3$ we obtain
\begin{equation}\label{P-8}
w_\varepsilon-v_\varepsilon=-\varepsilon\Delta v_\varepsilon\to0,\mbox{ as }\varepsilon\to0,\mbox{ in }L^\infty(L^2)\cap L^2(H^1).
\end{equation}
Hence, from \eqref{P-1}$_2$, \eqref{P-2} and \eqref{P-3}, we deduce that there exists a limit  pair $(u,v)$ such that 
$$
\left\{
\begin{array}{l}
u\in L^{5p/3}(Q),\quad  \nabla u \in L^{\gamma(p)}(Q),\\
v\in L^\infty(H^1)\cap L^2(H^2),
\end{array}
\right.
$$
and a subsequence of $\{(u_\varepsilon,v_\varepsilon,w_\varepsilon)\}_{\varepsilon>0}$, still denoted by  $\{(u_\varepsilon,v_\varepsilon,w_\varepsilon)\}_{\varepsilon>0}$, satisfying the following convergences, as $\varepsilon\to0$:
\begin{equation}\label{P-9}
\left\{
\begin{array}{rcl}
u_\varepsilon&\to&u\mbox{ weakly in }L^{5p/3}(Q),\\
\nabla u_\varepsilon&\to&\nabla u\mbox{ weakly in }L^{\gamma(p)}(Q),\\
u_\varepsilon&\to&u\mbox{ weakly in }L^{2p-1}(Q)\ \mbox{(in the logistic case),}\\
v_\varepsilon&\to&v\mbox{ weakly in }L^2(H^2)\mbox{ and weakly* in }L^\infty(H^1),\\
w_\varepsilon&\to&v\mbox{ weakly in }L^2(H^1)\mbox{ and weakly* in }L^\infty(L^2),\\
\partial_tu_\varepsilon&\to&\partial_tu\mbox{ weakly in } (L^{\mu(p)}(W^{1,10p/(7p-6)}))',\\
\partial_tw_\varepsilon&\to&\partial_tv\mbox{ weakly in }L^{5/3}((H^1)').
\end{array}
\right.
\end{equation}
From \eqref{P-3}, \eqref{P-5} and the Aubin-Lions compactness lemma, we have 
\begin{equation}\label{P-10}
\{u_\varepsilon\}_{\varepsilon>0}\mbox{ is relatively compact in }L^{\gamma(p)}(B), 
\end{equation}
for any Banach space  $B$ such that $W^{1,\gamma(p)}(\Omega)\hookrightarrow B \hookrightarrow (W^{1,10p/(7p-6)})'$, where the first embedding is compact. In particular, we can consider $B=L^{15/7}(\Omega)$ (using that $W^{1,5/4}(\Omega)\hookrightarrow L^{15/7}(\Omega)$ and $\gamma(p)>5/4$ for all $p>1$). Therefore, using this compactness jointly to the estimate \eqref{P-2}, there exists 
 a subsequence of $\{u_\varepsilon\}_{\varepsilon>0}$ (equally denoted) and a limit function $u\ge 0$ in $Q$ and $u\in L^{5p/3}(Q)$ such that  
\begin{equation}\label{P-12}
\left\{
\begin{array}{l}
u_\varepsilon^p\to u^p\mbox{ strongly in }L^{q}(Q)\ \forall q<5/3,\\
u_\varepsilon^p\to u^p\mbox{ weakly in }L^{1+(p-1)/p}(Q).
\end{array}
\right.
\end{equation}
On the other hand, convergences (\ref{P-9})$_3$ and (\ref{P-9})$_6$ and the Aubin-Lions compactness imply 
\begin{equation}\label{P-12b}
w_\varepsilon\to v\mbox{ strongly in }L^{2}(Q)\cap C((H^1)').
\end{equation}
From (\ref{P-4}), (\ref{P-9})$_2$ and (\ref{P-10}) we have that $u_\varepsilon\nabla v_\varepsilon\to u\nabla v$ weakly in $L^{10p/(6+3p)}(Q).$ Also, (\ref{P-8}), (\ref{P-9})$_2$ and (\ref{P-12b}) implies that ${v_\varepsilon}_+\to v_+$ strongly in $L^2(Q).$ In addition, from (\ref{P-1})$_2$ and since  $f\in L^{5/2}(Q_c),$ it holds that $f{v_\varepsilon}_+1_{\Omega_c}\to fv_+1_{\Omega_c}$ weakly in $L^{2}(Q).$ 
From previous convergences we can pass to the limit in the regularized system \eqref{eq1_aprox}, as $\varepsilon\to0$, and we conclude that the limit par  $(u,v)$ satisfies the variational problem 
\begin{eqnarray}
\int_0^T\langle\partial_tu,\overline{u}\rangle+\int_0^T(\nabla u,\nabla\overline{u})+\int_0^T(u\nabla v,\nabla\overline{u})=
\int_0^T(r\, u-\mu\, u^p,\overline{u}),\label{P-13}\\
\int_0^T\langle\partial_tv,\overline{w}\rangle+\int_0^T(\nabla v,\nabla\overline{w})+\int_0^T(v,\overline{w})
=\int_0^T(u^p,\overline{w})+\int_0^T(fv_+1_{\Omega_c},\overline{w}),\label{P-14}
\end{eqnarray}
for any $\overline{u}\in L^{\mu(p)}(W^{1,10p/(7p-6)})$ and 
$\overline{w}\in L^{5}(H^1)$. 
 Moreover, from \eqref{P-14}, considering that $u^p\in L^{5/3}(Q)$, $v\in L^2(H^2)$, and  integrating by parts in $\Omega$, we deduce the $v$-equation \eqref{eq1}$_2$ is satisfied   pointwisely a.e. $(t,x)\in Q$, i.e.,
\begin{equation}\label{P-15}
\partial_tv-\Delta v+v=u^p+fv_+1_{\Omega_c},\ \mbox{a.e.}\  \mbox{ in $Q$}.
\end{equation}
We already have that $u\ge0$ a.e. in $Q$. The positivity of $v$ follow by testing \eqref{P-15} by $v_-:=\min\{v,0\}\le0$ and using that $v_-=0$ if $v\ge0$, $\nabla v_-=\nabla v$ if $v\le0$ and $\nabla v_-=0$ if $v>0$, obtaining that $v_-\equiv0$. Consequently, $v\ge0$ a.e. in $Q$.  

On the other hand,  from $(\ref{P-1})_1$ and (\ref{P-5}), one has 
\begin{equation}\label{P-12bis}
u_\varepsilon\to u\mbox{ strongly in }  C((W^{1,10p/(7p-6)})').
\end{equation}
Using (\ref{P-12bis}), (\ref{P-12b}) and the assumption (\ref{elip1}) one has 
$$(u_\varepsilon(0),w_\varepsilon(0)) \to (u_0,v_0)\quad  \hbox{in $(W^{1,10p/(7p-6)} (\Omega))'\times (H^1(\Omega))'$}.$$
 In particular,  $(u(0),v(0))= (u_0,v_0)$ in $L^p(\Omega)\times H^1(\Omega)$. Consequently, the pair $(u,v)$ is a weak solution of system \eqref{eq1} (for $p\in(1,5/3]$) or \eqref{eq1b} (for $p\in(1,+\infty)$).


\subsection{Regularity Criterion for strong solution (Proof of Theorem \ref{strong}).} \label{seccionCriterium}

\

 
To prove Theorem \ref{strong} 
 we will  introduce some decoupled regularized problems 
 where all computations can be rigorously justified. Let $(u,v)$ be any weak solution of the system   (\ref{eq1}) or (\ref{eq1b}) satisfying the regularity  criterion \eqref{reg-crit}. 
The proof is divided in three steps:\\

 {\it \underline{Step 1: A non-hilbertian strong regularity for $v$: $v\in X_{5/2}$}}. 
 
 \
 
  If  $f\in L^{5/2}(L^{5/2+}(\Omega_c))$  and $u^p\in L^{5/2}(Q)$, 
by applying Corollary \ref{lem-A} (see Appendix \ref{AppendixA}, below) with $g=u^p\in L^{5/2}(Q)$ and $a=1-f\, 1_{\Omega_c}\in L^{5/2}(L^{5/2+}(\Omega)) $, we obtain the existence of $w\in X_{5/2}$  solving
\begin{eqnarray}\label{uni1}
\partial_tw-\Delta w+w=u^p+fw\, 1_{\Omega_c},\quad w(0)=v_0,
\quad \partial_{\bf n} w |_{\partial\Omega}=0.
\end{eqnarray}

On the other hand, the weak solution $v\in X_2$ is also a solution of the linear problem (\ref{uni1}). Then, by uniqueness of (\ref{uni1}) (in $W_2$)
one has   $v=w\in X_{5/2}$ (and in particular, by Lemma~\ref{An}, $\nabla v \in L^5(Q)$). Moreover, there exists 
$
K_1:=K_1(\|v_0\|_{W^{6/5,5/2}},\|f\|_{L^{5/2}(L^{5/2+})}, \|u ^p\|_{L^{5/2}(Q)})>0
$ such that
\begin{equation}\label{xx-1}
\|v\|_{X_{5/2}}\le K_1.
\end{equation}

 {\it \underline{Step 2: Additional regularity for $u$: $u\in X_u$}}.

 Since $u\in L^{5p/2}(Q)
 $, then $u^{p-1}\in L^{5p/(2(p-1))}(Q)\subset L^{5/2}(Q)$. Thus, we can use Lemma~\ref{lem-B}, by taking $w_0=u_0\in L^{\alpha(p)}(\Omega)$, ${\bf c}=\nabla v \in L^5(Q)$, $a=\mu \, u^{p-1}-r\in L^{5/2}(Q),$ $g^0=0$ and ${\bf g}^1={\bf 0}.$ Then, there exists a unique solution $\widetilde u$  of the  linear problem
 \begin{equation}\label{lin-aux}
 \left\{
 \begin{array}{rcl}
 \partial_t  \widetilde u-\Delta \widetilde u-\nabla\cdot(\widetilde u \nabla v) +(\mu\, u^{p-1}-r) \widetilde u&=& 0\ \mbox{ in }\ Q,\\
 \widetilde u(0)&=&u_0\ \mbox{ in }\ \Omega,\\
 (-\nabla \widetilde u - \widetilde u \nabla v)\cdot{\bf n}&=&0\ \mbox{ on }\ (0,T)\times\partial\Omega,
 \end{array}
 \right.
 \end{equation}
 such that  $\widetilde u^{q/2}\in W_2$ for any $q: 2\le q \le \alpha(p).$ In addition, for $q= \alpha(p)$ one also has $\widetilde u\in L^{\beta(p)}(Q)$ (because $\widetilde u ^{\alpha(p)/2}\in W_2\subset L^{10/3}(Q)$ and $\beta(q)=(5/3)\alpha(p)$); and for $q= 2$ one has $\nabla \widetilde u\in L^2(Q)$.
Consequently $\widetilde u\in X_u$ and there exists $\widetilde{K}_2=\widetilde{K}_2(\|u_0\|_{L ^{\alpha(p)}}, \Vert \nabla v \Vert_{L^{5}(Q)}, \Vert u^{p-1}\Vert_{L^{5/2}(Q)})>0$ such that 
\begin{equation}\label{xx-2}
\|\widetilde{u}\|_{X_u}\le \widetilde{K}_2.
\end{equation}

Therefore, it suffices to prove that $u=\widetilde u.$ We argue by duality,  by considering the dual problem of \eqref{lin-aux} for any second member $\varphi\in L^{5\alpha(p)/(2\alpha(p)+3)}(Q)$: to find $z$ solving
 \begin{equation}\label{dual-lin}
 \left\{
 \begin{array}{rcl}
- \partial_t z-\Delta z- \nabla v\cdot \nabla z+(\mu\, u^{p-1}-r) z&=& \varphi\ \mbox{ in }\ Q,\\
 z(T)&=&0\ \mbox{ in }\ \Omega,\\
 -\nabla z\cdot{\bf n}&=&0\ \mbox{ on }\ (0,T)\times\partial\Omega.
 \end{array}
 \right.
 \end{equation}
 Following the same argument of  Lemma~\ref{lem-B}, there exists a unique solution 
 $z\in L^\infty (L^{\alpha(p)})\cap L^{\beta(p)}(Q)$ with $\nabla z\in L^2(Q)$ ($z\in X_u$).  Since $u$ also satisfies  the problem \eqref{lin-aux} (only in a weeker sense that $\widetilde u$), then  testing the  problem \eqref{lin-aux} satisfied by $u-\widetilde u$ by  $z$ (the solution of dual problem \eqref{dual-lin}), 
 which is possible owing to the hypothesis $u\in L^{5p/2}(Q)\cap L^{10/3}(Q)$, one has 
$$
0=\int_0^T\int_\Omega(u-\widetilde u)\left(- \partial_t z-\Delta z- \nabla v\cdot \nabla z+(\mu\, u^{p-1}-r) z \right)
=\int_0^T\int_\Omega(u-\widetilde u)\varphi.
$$
Since that is true for any $\varphi\in  L^{5\alpha(p)/(2\alpha(p)+3)}(Q)$, one has $u=\widetilde u$. Consequently, $u\in X_u$ and there exists a constant
${K}_2={K}_2(\|u_0\|_{L ^{\alpha(p)}}, \Vert \nabla v \Vert_{L^{5}(Q)}, \Vert u^{p-1}\Vert_{L^{5/2}(Q)})>0$ such that 
\begin{equation}\label{xx-2}
\|u\|_{X_u}\le K_2.
\end{equation}
The estimate \eqref{bound_sol} follows from \eqref{xx-1} and \eqref{xx-2}.

\

 {\it \underline{Step 3: Uniqueness of strong solutions.}}
 
 \
 
Following a classical comparison argument and using the Gagliardo-Nirenberg 
interpolation inequality (see, for instance, \cite{nirenberg}) and Gronwall Lemma, we can deduce the uniqueness of strong solutions
of problem (\ref{eq1})-(\ref{eq2}) or (\ref{eq1b})-(\ref{eq2}).  In fact, if $(u_1,v_1), (u_2,v_2)\in X_u\times X_v$ are two possible solutions of  (\ref{eq1})-(\ref{eq2}) or (\ref{eq1b})-(\ref{eq2})
related to the same $f$, then, subtracting the respective equations for $(u_1,v_1)$ and $(u_2,v_2)$ and denoting $(u,v):=(u_1-u_2,v_1-v_2),$ we can deduce that the pair $(u,v)$ satisfies the following system
\begin{equation}\label{U-1}
\left\{
\begin{array}{rl}
&\displaystyle\int_0^T\langle\partial_tu,\overline{u}\rangle_{(H^1)'}
+\int_0^T\int_\Omega\nabla u\cdot\nabla\overline{u}
-\displaystyle\int_0^T\int_\Omega(u\nabla v_1+u_2\nabla v)\cdot\nabla\overline{u}\\
&\ \ \ \ \ \ \ \ \ = \displaystyle\int_0^T\int_\Omega\big(ru-\mu(u_1^p-u_2^p)\big)\overline{u},\ \ \forall\overline{u}\in  Z=Y_u',\\
\noalign{\vspace{-1ex}}\\
 &\partial_tv-\Delta v+v=u_1^p-u_2^p+fv\,1_{\Omega_c}\ \mbox{ a.e. }(t,x)\in Q,\\
 &u(0)=0, \ \ \ v(0)=0,\ \mbox{ in }\ \Omega,\\
 &\partial_{\bf n}v=0,\ \mbox{ on }\ (0,T)\times\partial\Omega.
\end{array}
\right.
\end{equation}
Moreover, from the mean-value theorem we deduce that there exists a measurable and non-negative function $\varphi:Q\to [0,\infty)$ attaining  values between $u_1$ and $u_2$ such that
$$
u_1^p(t,x)-u_2^p(t,x)=p\,\varphi^{p-1}(t,x)u(t,x)\ \mbox{ a.e. }(t,x)\in Q.
$$
Thus,
\begin{equation}\label{U-1-1}
-\mu\int_0^T\int_\Omega(u_1^p-u_2^p)\overline{u}
=-\mu p\int_0^T\int_\Omega\varphi^{p-1}u\overline{u}, \quad \forall \overline{u}\in Z=Y'_u.
\end{equation}

Then, making $\overline{u}=u\in X_u\subset Z$ in \eqref{U-1}$_1$ and testing \eqref{U-1}$_2$ by $(v-\Delta v)\in L^{5/2}(Q)\subset L^{5/3}(Q)$, using the H\"older and Young inequalities, the point-wise inequality $|x^p-y^p|\le |x-y|(|x|^{p-1}+|y|^{p-1})$ for $x,y\ge0$, and taking into account the sign of (\ref{U-1-1}) when $\overline{u}=u,$ and the classical 3D interpolation inequalities 
$\|u\|_{L^{10/3}}\le C\|v\|^{2/5}\|u\|^{3/5}_{H^1}$ and 
$\|v\|_{L^{10}}\le C\|v\|_{H^1}^{4/5}\|v\|^{1/5}_{H^2}$, we arrive at the time differential inequality
\begin{eqnarray}\label{U-3}
&&\frac{d}{dt}\left(\|u\|^2+\|v\|^2_{H^1}\right)+C\left(\|u\|^2_{H^1}+\|v\|^2_{H^1}+\|v\|^2_{H^2}\right)\nonumber\\
&&\  \ \le C\!\left(\|u_1\|_{L^{5(p-1)}}^{5(p-1)}+\|u_2\|_{L^{5(p-1)}}^{5(p-1)}
+\|\nabla v_1\|_{L^5}^5+1\right)\|u\|^2\nonumber\\
&&\ \  \ \ +C\left(\|u_2\|^5_{L^5}+\|f\|^{5/2}_{L^{5/2}}\right)\|v\|^2_{H^1}.
\end{eqnarray}

Since the norms 
$\|u_2\|^5_{L^5}, $  $\|u_i\|^{5(p-1)}_{L^{5(p-1)}} $ and $\|\nabla v_1\|_{L^5}^5$   are integrable in time
 thanks to \eqref{xx-2} and \eqref{xu}-\eqref{xv}, then   from \eqref{U-3}, the Gronwall Lemma and the fac that $(u(0),v(0))=(0,0),$ we arrive at $(u,v)=(0,0)$. Consequently $(u_1,v_1)=(u_2,v_2)$, which conclude the uniqueness.

The proof of Theorem \ref{strong} is then finished. \hfill$\Box$

\


At this point, we introduce some remarks about the previous proof. 

\begin{remark}\label{r:3.1}
 Analyzing 
 the regularity for $\nabla \cdot (u \nabla v)$, we get
that $u \in L^{5}(Q)$  and $\Delta v \in L^{5/2}(Q)$ implies $u \,\Delta v \in L^{5/3}(Q),$ and 
 $\nabla v \in L^5(Q)$ and $\nabla u \in L^{2}(Q)$ implies $\nabla u \cdot \nabla v \in L^{10/7}(Q).$ 
Therefore, $\nabla \cdot (u \nabla v)=u\Delta v+\nabla u\cdot\nabla v \in L^{10/7}(Q)$ and, assuming $u_0\in W^{3/5,10/7}(\Omega)$, one has $u\in X_{10/7}$ by parabolic  regularity of the heat equation.  In particular, this implies $u \in L^{10/3}(Q)$ and $\nabla u \in L^2(Q)$ by Lemma \ref{An}; thus, to get more regularity than $u\in X_{10/7}$ is not possible by means of  a bootstrap argument. This is the reason why we consider the $u$-equation only  in a variational sense.
\end{remark}

\begin{remark}
By assuming a little more regularity than \eqref{reg-crit}, that is  $u^p \in L^{5/2+}(Q)$ and $f\in L^{5/2+}(Q_c)$, from  parabolic regularity, 
$
v \in X_{5/2+}\subset L^\infty(Q)
$
(cf. Lemma \ref{An}). Then, following the bootstrap argument of \cite[Theorem 3.3]{Andre_SICOM} based on the heat-Neumann problem,  it also holds $u \in X_{5/2+}$.
Now, since we only assume $f\in L^{5/2}(L^{5/2+}(\Omega_c))$,  we have arrived at $v\in X_{5/2}$ but with this regularity the bootstrap argument on the heat equation to improve regularity on $u$ does not work (see Remark \ref{r:3.1}).  This is the reason why we consider the auxiliary linear problem \eqref{lin-aux} to improve the regularity of $u$, where the convective and the reaction terms are incorporated into the linear operator of \eqref{lin-aux}.
\end{remark}

\begin{remark} Only assuming $f\in L^{5/2}(Q_c)$ and $u \in L^{5p/2}(Q)\cap L^{10/3}(Q)$ 
is not sufficient to get strong regularity for $u$. 
Indeed, note that Step 1 of Theorem \ref{strong} is still true. However, following Step 2, now  $f\in L^{5/2}(Q_c)$ and $v \in L^{\infty}(L^{\infty-})$; hence 
$f v\,1_{\Omega_c} \in L^{5/2}(L^{5/2-})$. As $u^p \in L^{5/2}(Q)$, then the parabolic  regularity only gives
$v \in X_{5/2-}.$ Therefore $\nabla v\not\in L ^5(Q)$ in general. However, the regularity $\nabla v\in L^5(Q)$ has been essential in the  argument made in Step 2 and even more, it will be also essential for the well-posedness of the linearized problem (see Subsection \ref{linealizado} below).
\end{remark}
\section{Optimal control. Existence of optimal solutions and optimality conditions}
\subsection{Existence of optimal solution (proof of Theorem \ref{control}).} \label{Sec:OptimalSolution}
\begin{proof}
By assumption $\mathcal{S}_{ad}$ is not empty. Furthermore, since  $J$ is bounded from below, there exists a  minimizing sequence $\{s_m\}_{m\in\mathbb{N}}:=\{(u_m,v_m,f_m)\}_{m\in\mathbb{N}}\subset \mathcal{S}_{ad}$ such that
\begin{gather*}
    \lim_{m\to\infty}J(s_m)=\inf_{s\in \mathcal{S}_{ad}} J(s) (\ge 0).
\end{gather*}

From the definition of $\mathcal{S}_{ad}$, for all $m\in\mathbb{N},$ $\{u_m,v_m,f_m\}_{m\in\mathbb{N}}$ satisfies  (\ref{eq1}) or  (\ref{eq1b}). 
 From the definition of $J$ and  the assumption  $\gamma_f>0$ or $\mathcal{F}$ is bounded in 
$L^{5/2}(L^{5/2+}(\Omega_c))$, we conclude that $\{f_m\}_{m\in\mathbb{N}}$ is bounded in $L^{5/2}(L^{5/2+}(\Omega_c))$ and $\{(u_m)^p\}_{m\in\mathbb{N}}$ is bounded in $L^{5/2}(Q)\cap L^{10/3p}(Q).$ 

Therefore, from Theorem~\ref{strong}, 
the sequence $\{(u_m,v_m)\}_{m\in\mathbb{N}}$  is bounded in 
$X_u\times X_v$. 
Then, recalling that $\mathcal{F}$ is a closed and convex subset of $L^{5/2}(L^{5/2+}(\Omega_c))$
 (thus, weakly closed in $L^{5/2}(L^{5/2+}(\Omega_c))$),
  there exists a subsequence of $\{s_m\}_{m\in\mathbb{N}},$ still denoted by $\{s_m\}_{m\in\mathbb{N}}$ and a limit $(u^*,v^*,f^*)$ with $f^*\in\mathcal{F}$ such that

\begin{equation}\label{convergencia}
\left\{
\begin{array}{rcll}
    (u_m,v_m)&\rightarrow&(u^*,v^*)&\text{weakly in}\ 
   L^{\beta(p)}(Q)\times L^{5/2}(W^{2,5/2}),
   \\
    \left(u_m,v_m\right)&\rightarrow&(u^*,v^*) &\text{weakly${*}$ in}\
   L^{\infty}(L^{\alpha(p)})\times L^{\infty}(W^{6/5,5/2}),
\\
    \left(\partial_t u_m, \partial_t v_m\right)&\rightarrow& (\partial_t u^*,\partial_t v^*)
    &\text{weakly in}\      L^{2}((H^1)')\times L^{5/2}(Q), \\
      f_m&\rightarrow& f^*\ &\text{weakly in}\ L^{5/2}(L^{5/2+}(\Omega_c)) .
\end{array}
\right.
\end{equation}

In particular, from \eqref{convergencia}   one has the compactness (\cite[Theorem 5.1]{Lions} and \cite[Corollary 4]{Simon})
\begin{eqnarray}
u_m&\to& u^*\mbox{ strongly in }
L^2(Q)\cap C((H^1)'),\\\label{conv-x}
v_m&\to& v^*\mbox{ strongly in }
C(L^{\infty-}).\label{conv-x1}
\end{eqnarray}
By using the bound of $u_m$ in $L^{\beta(p)}(Q)$, one has 
$$
u_m\to u^*\mbox{ strongly in } L^q(Q) \quad \forall\, q<\beta(p).
$$
Now, since $\{v_m\}_{m\in\mathbb{N}}$ is bounded in $X_v$ we deduce that 
$\{\nabla v_m\}_{m\in\mathbb{N}}$ is bounded in $L^5(Q)$ and 
$$
\nabla v_m \to \nabla v^* \mbox{ weakly in } L^5(Q).
$$
From the above convergences 
we get
\begin{equation}\label{con-x2}
\left\{
\begin{array}{rcll}
u_m\nabla v_m&\to&u^*\nabla v^* &\mbox{weakly in } L^{5/2}(Q), \\
f_mv_m\,1_{\Omega_c}&\to&f^*v^*\,1_{\Omega_c} &\mbox{weakly in }L^{5/2}(Q).
\end{array}
\right.
\end{equation}

From (\ref{conv-x})-(\ref{conv-x1}), it holds $(u_{m}(0),v_{m}(0))\rightarrow(u^*(0),v^*(0))\ \ \text{strongly in}\ 
(H^1(\Omega))' \times L^{\infty-}(\Omega)$,
and since $u_{m}(0)=u_0,\ v_{m}(0)=v_0$ for all $m\in\mathbb{N}$, then 
$(u^*(0),v^*(0))=(u_0,v_0).$ 

Then, considering the convergences (\ref{convergencia})-(\ref{con-x2}) we get that 
$s^*=(u^*,v^*,f^*)$ is the strong solution of problem  (\ref{eq1}) or  (\ref{eq1b}) and the initial condition $(u^*(0),v^*(0))=(u_0,v_0)$, that is, $s^*\in \mathcal{S}_{ad}.$ Thus, 
\begin{gather}\label{eq013}
    \lim_{m\to\infty}J(s_m)=\inf_{s\in \mathcal{S}_{ad}} J(s)\leq J(s^*).
\end{gather}
Finally, since $J$ is weakly lower semicontinuous on 
$\left(L^{5p/2}(Q)\!\cap L^{10/3}(Q)\right)\!\times L^2(Q) \times L^{5/2}(L^{5/2+}(\Omega_c)),$ we get 
\begin{gather}\label{eq014}
    J(s^*)\leq\liminf_{m\to\infty} J(s_m).
\end{gather}
From (\ref{eq013}) and (\ref{eq014}) we conclude (\ref{os-1}).
\end{proof}

\begin{remark}\label{R4-1}
We have imposed the hypothesis $\mathcal{S}_{ad}\ne\emptyset$ because there is no guarantee about the existence of global strong solutions of system \eqref{eq1}-\eqref{eq2} (or \eqref{eq1b}-\eqref{eq2}) defined in $(0,T)$. 
Following the ideas of \cite[Remark 6]{Exequiel3D} and \cite{Lopez}, we can furnish a particular case where this hypothesis holds, namely, when $\Omega_c=\Omega$ and the initial data 
$u_0, v_0 \in W^{6/5+,5/2+}(\Omega)$ such that $0<\kappa\le v_0$ in $\Omega$. Indeed, from \cite[Theorem 10.22]{feireisl} we consider the unique strong solution $ v\in X_{5/2+}$ of the Neumann problem 
$$
\partial_tv-\Delta v=0\mbox{ in }Q,\ v(0,\cdot)=v_0\mbox{ in }\Omega,\ {\partial}_{\bf n}v=0\mbox{ on } (0,T)\times\partial\Omega.
$$
This solution also satisfies $v\ge \kappa>0$ in $Q$. 
Now, giving this positive function $v\in X_{5/2+}$ we consider the parabolic problem 
\begin{equation}\label{nv-1}
\left\{
\begin{array}{rcl}
\partial_tu-\Delta u-\nabla\cdot(u\nabla v)&=&r\,u-\mu\, u^p\ \mbox{ in $Q$,}\\
u(0)&=&u_0\ \mbox{ in $\Omega$,}\\
\partial_{\bf n}u&=&0\ \mbox{ on $(0,T)\times\partial\Omega$.}
\end{array}
\right.
\end{equation}

Then, testing (\ref{nv-1})$_1$ by $|u|^{\alpha(p)-2}u$ the following inequality can be obtained (see  Lemma \ref{lem-B}):
$$
\frac{d}{dt}\| |u|^{\alpha(p)/2}\|^2+C\| |u|^{\alpha(p)/2} \|^2_{H^1}+\Vert u\Vert_{L^{\alpha(p)+p-1}}^{\alpha(p)+p-1}\le C\left(1+\|\nabla v\|^5_{L^5}\right)\| |u|^{\alpha(p)/2} \|^2;
$$
which implies $|u|^{\alpha(p)/2}\in L^\infty(L^2)\cap L^2(H^1)\hookrightarrow L^{10/3}(Q)$, and then $u\in L^{\beta(p)}(Q)$.
By applying again Lemma \ref{lem-B} below, one has that $\nabla u\in L^2(Q)$. 
Since  $v\in X_{5/2+},$ then $\nabla v\in L^{5+}(Q)$  that jointly to the regularity $\nabla u\in L^2(Q)$ gives that $\nabla\cdot(u\nabla v)\in L^{10/7+}(Q).$
 Then, following the bootstrapping argument of \cite[Theorem 3.3]{Andre_SICOM}, 
 one arrives at $u\in X_{5/2+}$, which implies in particular that  $u\in L^\infty(Q)$ by Lemma \ref{An}. 
 Finally, since the control acts on the whole domain $\Omega$ (that is, $\Omega_c=\Omega$),  
 we can take the control $f$ as the solution of the point-wise equation 
$v=u^p+f\, v$ in $Q$, 
that is, $f=1-u^p/v \in L^\infty(Q)$,  which is an admisible control, and thus,  $\mathcal{S}_{ad}\ne\emptyset$.


\end{remark}

 \subsection{Differentiability of the equality operator}

 \
 
 
We consider the equality operator defined in (\ref{oper-s}):
$$\mathcal{S}:=(\mathcal{S}_1,\mathcal{S}_2):
{X}_{u}  \times {X}_{v} \times L^{5/2}(L^{5/2+}(\Omega_c))
\to Y_u \times Y_v. 
$$
Since $\mathcal{S}_{ad}\neq \emptyset$,  
let $(\widehat{u},\widehat{v},\widehat{f})$ be an element of $\mathcal{S}_{ad}$. We consider the spaces 
$$\widehat{X}_u=\{w \in X_u: \, w(0)=0\},
\quad 
\widehat{X}_v=\{w \in X_v: \ w(0)=0\}
$$
and  the closed and convex subset  of $X_u\times X_v\times L^{5/2}(L^{5/2+}(\Omega_c))$: 
$$\mathbb{M}=(\widehat{u},\widehat{v},\widehat{f})+\widehat{X}_{u}  
\times
\widehat{X}_{v}
\times
(L^{5/2}(L^{5/2+}(\Omega_c))-\widehat{f}).$$
Thus, the bilinear optimal control problem \eqref{C-3} is rewritten as
\begin{eqnarray}\label{sss-1}
\min_{ \left(u,v,f\right)\in \mathbb{M}}J(u,v,f)\ \mbox{ subject to }\ \mathcal{S}(u,v,f)={\bf 0}.
\end{eqnarray}

\

At this point, we deduce the  differentiability of the equality operator $\mathcal{S}$.
\begin{lemma}\label{der-oper}
The operator $\mathcal{S},$ from ${X}_{u}  \times {X}_{v} \times L^{5/2}(L^{5/2+}(\Omega_c))$ to $Y_u \times Y_v,$ is continuously Fr\'echet-differentiable and its derivative at an arbitrary point 
$z^*=(u^*,v^*,f^*)\in {X}_{ u} \times {X}_{v} \times L^{5/2}(L^{5/2+}(\Omega_c)),$ in the direction $z=(U,V,F)\in {X}_{ u} 
\times{X}_{v}\times L^{5/2}(L^{5/2+}(\Omega_c)) $ is the linear and bounded operator $\mathcal{S}'(z^*)=(\mathcal{S}_1'(z^*),\mathcal{S}_2'(z^*)): {X}_{ u} 
\times{X}_{v}\times L^{5/2}(L^{5/2+}(\Omega_c)) \to Y_u \times Y_v$ defined by
\begin{eqnarray}\label{sss-2-u}
&&\langle\mathcal{S}_1'(z^*)[z],\overline{u}\rangle_{Z'}
=
\int_0^T\langle\partial_tU,\overline{u}\rangle_{Z'}
+\int_0^T\int_\Omega\nabla U\cdot\nabla\overline{u}
\nonumber\\
&&+\int_0^T\int_\Omega\left(u^*\nabla V+U\nabla v^*\right)\cdot\nabla\overline{u}
-\int_0^T\int_\Omega\left(rU - \mu \,p \, |u^*|^{p-2} u^* U\right)\overline{u},
\quad \forall\,\overline{u}\in Z,
\end{eqnarray}
and 
\begin{equation} \label{sss-2-v}
\mathcal{S}_2'(z^*)[z]=
\partial_tV-\Delta V+V-p \, |u^*|^{p-2} u^* U-f^*V\,1_{\Omega_c}-Fv^*\,1_{\Omega_c}
\quad \hbox{in $L^{5/2}(Q)$}.
\end{equation}
\end{lemma}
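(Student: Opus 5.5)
The proof decomposes $\mathcal{S}$ into linear, bilinear and Nemytskii parts. Linear continuous maps and continuous bilinear maps between Banach spaces are automatically $C^1$. So it will be enough to check three things: each piece maps $X_u\times X_v\times L^{5/2}(L^{5/2+}(\Omega_c))$ continuously into $Y_u\times Y_v$; the bilinear pieces are bounded; and the Nemytskii operator $u\mapsto |u|^p$ is $C^1$ with derivative $p|u|^{p-2}u$ in the right spaces.

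\textbf{Linear part.} In $\mathcal{S}_1$ the linear terms are $\int\langle\partial_t u,\overline u\rangle+\int\nabla u\cdot\nabla\overline u-r\int u\overline u$. By the definition of $X_u$, $\partial_t u-\Delta u=g^0-\nabla\cdot{\bf g}^1\in Z'=Y_u$ with $\|\cdot\|_{Z'}\le\|u\|_{X_u}$. The term $r u$ lies in $L^{\beta(p)}(Q)\subset L^{5\alpha/(2\alpha+3)}(Q)$, since $\beta(p)=5\alpha(p)/3$ exceeds this exponent. In $\mathcal{S}_2$, the map $v\mapsto\partial_tv-\Delta v+v$ is bounded from $X_v$ into $L^{5/2}(Q)$ by the definition of $X_{5/2}$.

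\textbf{Bilinear parts.} There are two: $(u,v)\mapsto u\nabla v$ in $\mathcal{S}_1$ and $(f,v)\mapsto fv1_{\Omega_c}$ in $\mathcal{S}_2$.
\begin{itemize}
\item For the first, I use $u\in L^{\beta(p)}(Q)$ and $\nabla v\in L^5(Q)$ (Lemma \ref{An}). H\"older gives $u\nabla v\in L^{s}$ with $1/s=3/(5\alpha)+1/5=(\alpha+3)/(5\alpha)$, which is exactly the exponent required for ${\bf g}^1$. Hence $|\langle u\nabla v,\nabla\overline u\rangle|\le C\|u\|_{X_u}\|v\|_{X_v}\|\overline u\|_Z$.
\item For the second, $v\in X_{5/2}\subset L^\infty(L^{r})$ for all $r<\infty$. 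Then $fv\in L^{5/2}(L^{5/2})$ with $\|fv\|\le C\|f\|_{L^{5/2}(L^{5/2+})}\|v\|_{L^\infty(L^{r})}$ for $r$ large.
\end{itemize}
Differentiating these bilinear maps gives the terms $u^*\nabla V+U\nabla v^*$ and $-f^*V-Fv^*$ of \eqref{sss-2-u}--\eqref{sss-2-v}.

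\textbf{Nemytskii part (main obstacle).} The operator $N(u)=|u|^p$ appears in $\mathcal{S}_1$ as a map into $L^{5\alpha/(2\alpha+3)}(Q)$ and in $\mathcal{S}_2$ as a map into $L^{5/2}(Q)$; both are required from $L^{\beta(p)}(Q)$. The candidate derivative is $N'(u^*)U=p|u^*|^{p-2}u^*U$.
\begin{itemize}
\item Boundedness of $N'(u^*)$: by H\"older, $|u^*|^{p-1}U\in L^{\beta/p}$. For $p\ge2$ we have $\beta=5(p-1)$, and $\beta/p\ge5/2$ holds when $p\ge 2$. For $p<2$ we have $\beta=5$, and $5/p\ge5/2$. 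So the target $L^{5/2}$ is reached in both cases, and the $Y_u$ exponent $5\alpha/(2\alpha+3)<5/2$ is then also covered.
\item Fr\'echet differentiability: I will write the remainder as
\[
|u^*+U|^p-|u^*|^p-p|u^*|^{p-2}u^*U=\int_0^1 p\bigl(|u^*+sU|^{p-2}(u^*+sU)-|u^*|^{p-2}u^*\bigr)U\,ds.
\]
By H\"older its $L^{\beta/p}$ norm is bounded by $\|U\|_{L^\beta}$ times $\sup_s\||u^*+sU|^{p-2}(u^*+sU)-|u^*|^{p-2}u^*\|_{L^{\beta/(p-1)}}$. This second factor tends to $0$ as $U\to0$ in $L^\beta$, by continuity of the Nemytskii operator $w\mapsto|w|^{p-2}w$ from $L^\beta$ into $L^{\beta/(p-1)}$ (Krasnosel'skii's theorem, via the growth bound $|w|^{p-1}$).
\item Continuity of $u^*\mapsto N'(u^*)$ in operator norm follows from the same Krasnosel'skii continuity.
\end{itemize}
The delicate point is the exponent bookkeeping for $p<2$ and $p$ near $1$, where $|w|^{p-2}w$ is only H\"older continuous; the integral-remainder form above avoids needing Lipschitz bounds. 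Assembling the three parts gives $\mathcal{S}\in C^1$ with the stated derivative.
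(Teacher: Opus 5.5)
Your proposal is correct and follows essentially the paper's argument. You use the same H\"older bookkeeping: $U\nabla V\in L^{5\alpha(p)/(\alpha(p)+3)}(Q)$, and $FV$ is controlled through $X_{5/2}\subset L^\infty(L^{\infty-})$. You also control the Nemytskii remainder the same way, via continuity of $w\mapsto|w|^{p-2}w$ from $L^{\beta(p)}(Q)$ into $L^{\beta(p)/(p-1)}(Q)$.

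The differences are only technical. You write the remainder in integral form and invoke Krasnosel'skii's theorem, where the paper uses a pointwise mean-value function $\theta$ plus dominated convergence; your route avoids the measurability of $\theta$ and the subsequence argument implicit in the paper. You also rely on the automatic smoothness of bounded bilinear maps, where the paper estimates $U\nabla V$ and $FV$ directly.
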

\begin{proof}
%
We first analyze the Fr\'echet differentiability of $\mathcal{S}_1$.  Let $z^*=(u^*,v^*,f^*)$ and $z=(U,V,F)$ two elements of
${X}_u\times {X}_v\times L^{5/2}(L^{5/2+}(\Omega_c))$. We have
\begin{equation}\label{sss-3}
\left\{
\begin{array}{rcl}
\mathcal{S}_1(z^*+z)-\mathcal{S}_1(z^*)
&=&\partial_tU-\Delta U-\nabla\cdot(u^*\nabla V)-\nabla\cdot(U\nabla v^*)-\nabla\cdot(U\nabla V)\\
&& -rU + \mu\Big( |u^*+U|^p - |u^*|^p \Big) \ \ \mbox{ in } Y_u=Z'.
\end{array}
\right.
\end{equation}

Using that the real function  $\varphi(y)= |y|^p$ is continuously-differentiable in $\mathbb{R}$ (recall that $p>1$), from the mean-value theorem  there exists a measurable function $\theta:Q\rightarrow \mathbb{R}$ attaining intermediate values between the ones of $u^*$ and $u^*+U$ such that
\begin{equation}\label{sss-4}
|u^*(t,x)+U(t,x)|^p-|u^*(t,x)|^p=  p \, |\theta(t,x)|^{p-2}\theta(t,x) U(t,x),\ a.e.\ (t,x)\in Q.
\end{equation}
Furthermore, by the dominated convergence theorem, we have 

\begin{equation}\label{sss-4-1}
|\theta|^{p-2}\theta\to |u^*|^{p-2} u^* \hbox{ in $L^{\beta(p)/(p-1)}(Q)$, }
\mbox{ as } U\to 0 \hbox{ in $L^{\beta(p)}(Q)$}.
\end{equation}

Then, equality \eqref{sss-2-u} together with \eqref{sss-3}
 and \eqref{sss-4}  and Proposition \ref{ident}
imply

\begin{equation}\label{sss-5}
\begin{array}{rcl}
&&\|\mathcal{S}_1(z^*+z)-\mathcal{S}_1(z^*)-\mathcal{S}_1'(z^*)[z]\|_{Y_u}
\\
 && \quad \le  \|\nabla\cdot(U\nabla V)\|_{Y_u}
 +\mu p\,\|( |u^*|^{p-2} u^* - |\theta|^{p-2}\theta )U\|_{Y_u}\\
 &&\quad 
 \le \|U\nabla V\|_{L^{5\alpha(p)/(\alpha(p)+3)}(Q)}
 +\mu p\,\|( |u^*|^{p-2} u^* - |\theta|^{p-2}\theta )U\|_{L^{5\alpha(p)/(2\alpha(p)+3)}(Q)}.
\end{array}
\end{equation}

 Now, since $(U, V)\in X_u\times X_v$, then in particular $(U,\nabla V)\in L^{\beta(p)}(Q)\times L^5(Q)$. 
 Moreover, taking into account that $(5/3)\alpha(p)=\beta(p)$ for any $p>1$, 
 from the H\"older inequality we obtain
\begin{eqnarray}
\|U\nabla V \|_{ L^{5\alpha(p)/(\alpha(p)+3)}(Q)}
&\leq& 
\|U\|_{L^{5\alpha(p)/3}(Q)}\|\nabla V\|_{L^{5}(Q)}
\le C\|U\|_{{X}_{ u}}\|V\|_{{X}_{ v}}.
\label{sss-6}
\end{eqnarray}

On the other hand, 
from the H\"older inequality we have
\begin{eqnarray}
&&
\|( |u^*|^{p-2} u^* - |\theta|^{p-2}\theta )U\|_{L^{5\alpha(p)/(2\alpha(p)+3)}(Q)}
 \nonumber \\
&& \quad \le \|( |u^*|^{p-2} u^* - |\theta|^{p-2}\theta )\|_{L^{5/2}(Q)} 
 \|U\|_{L^{5\alpha(p)/3}(Q)}
\nonumber \\
 &&\quad \le   \,  C \,  \| |u^*|^{p-2} u^* - |\theta|^{p-2}\theta \|_{L^{5/2}(Q)}\|U\|_{X_u}.
\label{sss-6-1}
\end{eqnarray}

Therefore, from \eqref{sss-5}-\eqref{sss-6-1}  
and taking into account \eqref{sss-4-1}, we deduce that
\begin{eqnarray*}
&&\lim_{\| z \|_{{X}_{u}\times {X}_{v}
\times L^{5/2}(L^{5/2+})} \to0}
\frac{\|\mathcal{S}_1(z^*+z)-\mathcal{S}_1(z^*)-\mathcal{S}_1'(z^*)[z]\|_{Y_u}}
{\|z \|_{ {X}_{ u}\times {X}_{v}
\times L^{5/2}(L^{5/2+})}}\\
&&
 \le \lim_{\| z \|_{{X}_{ u}\times{X}_{v}\times L^{5/2}(L^{5/2+})} \to0}
C \left(\| z \|_{{X}_{u}\times{X}_{v}
\!\times L^{5/2}(L^{5/2+})}
+
\!\| |u^*|^{p-2} u^* - |\theta|^{p-2}\theta \|_{L^{ 5/2}(Q)}\right)=0.
\end{eqnarray*}

Consequently, the operator $\mathcal{S}_1$ is Fr\'echet-differentiable and its Fr\'echet derivative is given in
\eqref{sss-2-u}. The continuity of the operator 
$$z^*\in X_u\times X_v\times L^{5/2}(L^{5/2+}(\Omega_c)) \longmapsto S_1'(z^*)[\cdot]\in  \mathcal{L}(X_u\times X_v\times L^{5/2}(L^{5/2+}(\Omega_c));Y_u)$$ can be deduced using the same type of estimates developed previously. In fact, one has

$$
\| \mathcal{S}_1'(z_1^*)-\mathcal{S}_1'(z_2^*)\|_{ \mathcal{L} }
\le C(\| u_1^*-u_2^*\|_{L^{\beta(p)}(Q)} + \| \nabla (v_1^*-v_2^*) \|_{L ^{5}(Q)})
+ \| |u_1^*|^{p-2}u_1^*-|u_2^*|^{p-2}u_2^* \|_{L^{5/2}(Q)}).
$$

Now, we will analyze the Fr\'echet differentiability of $\mathcal{S}_2$. Notice that
$$
\mathcal{S}_2(z^*+z)-\mathcal{S}_2(z^*)
=\partial_t V-\Delta V+ V -
\vert u^*+U \vert^{p} 
+|u^*|^p - (f^* V\,+ F v^* + F V)\,1_{\Omega_c}.
$$
Then, 
from \eqref{sss-2-v} and \eqref{sss-4} we get
\begin{eqnarray*}
&&\|\mathcal{S}_2(z^*+z)-\mathcal{S}_2(z^*)-\mathcal{S}_2'(z^*)[z]\|_{L^{ 5/2}(Q)}\\
&&\ \ \le 
p\, \| ( |u^*|^{p-2} u^* -|\theta|^{p-2}\theta)U\|_{L^{ 5/2}(Q)} + \| F \, V\|_{L^{ 5/2}(Q)}\\
&&\ \ \le  \||u^*|^{p-2} u^* -|\theta|^{p-2}\theta\|_{L^{5\beta(p)/(2\beta(p)-5)}(Q)}
\|U\|_{L^{ \beta(p)}(Q)}\\
&&\ \ \ +   \| F \|_{L^{5/2}(L^{5/2+}(\Omega_c))}\|V\|_{L^\infty(L{^{\infty-}})}\\
&&\ \ \le C  \||u^*|^{p-2} u^* -|\theta|^{p-2}\theta\|_{L^{5\beta(p)/(2\beta(p)-5)}(Q)}\|U\|_{X_u}+ C\| F \|_{L^{5/2}(L^{5/2+}(\Omega_c))}\|V\|_{X_v}.
\end{eqnarray*}


Therefore,  taking into account \eqref{sss-4-1}  and that $\frac{5\beta(p)}{2\beta(p)-5}\le \frac{\beta(p)}{p-1}$
(in fact, $\frac{5\beta(p)}{2\beta(p)-5}=5$ if $p\le2$ and $\frac{5\beta(p)}{2\beta(p)-5}=\frac{5(p-1)}{2p-3}<5$ if $p>2$),
we deduce
\begin{eqnarray*}
&&\lim_{\|z \|_{{X}_{ u}\times{X}_{v}
\times L^{5/2}(L^{5/2+})} \to 0}
\frac{\|\mathcal{S}_2(z^*+z)-\mathcal{S}_2(z^*)-\mathcal{S}_2'(z^*)[z]\|_{L^{ 5/2}(Q)}}
{\|z \|_{{X}_{u}\times{X}_{v} \times L^{5/2}(L^{5/2+})}}\\
&&
\quad \le \lim_{\| z \|_{{X}_{u}\times{X}_{v} \times L^{5/2}(L^{5/2+})} \to0}
C \left( \| |u^*|^{p-2} u^* -|\theta|^{p-2}\theta\|_{L^{5\beta(p)/(2\beta(p)-5)}(Q)} +  \| F \|_{L^{5/2}(L^{5/2+})} \right)=0.
\end{eqnarray*}


Consequently, the operator $\mathcal{S}_2$ is Fr\'echet-differentiable and its Fr\'echet derivative is given in 
\eqref{sss-2-v}. Again, the continuity of the operator 
$$z^*\in X_u\times X_v\times L^{5/2}(L^{5/2+}(\Omega_c)) \longmapsto S_2'(z^*)[\cdot]\in  \mathcal{L}(X_u\times X_v\times L^{5/2}(L^{5/2+}(\Omega_c)); Y_v)
$$
 can be proved. In fact, we get 
\begin{eqnarray*}
\| \mathcal{S}_2'(z_1^*) - \mathcal{S}_2'(z_2^*)\|_{\mathcal{L}} 
 &\le& C( \| |u_1^*|^{p-2}u_1^*-|u_2^*|^{p-2}u_2^* \|_{L ^{5\beta(p)/(2\beta(p)-5)}(Q)} \\
 &&+ \| v_1^*-v_2^* \|_{L^{\infty}(L^{\infty-})} + \| f_1^*-f_2^* \|_{L^{5/2}(L^{5/2+}(\Omega_c))}).
\end{eqnarray*}
\end{proof}
 \subsection{Well-posedness of the linearized system}\label{linealizado}
 \par\noindent
  \begin{proposition}\label{v1}
Let $(u^*,v^*,f^*)\in X_u \times X_v\times L^{5/2}(L^{5/2+}(\Omega_c))$. 
Then, for any $(g_u,g_v)\in Y_u\times Y_v$, 
with $g_u=g^0-\nabla \cdot{\bf g}^1$, there exists a unique solution $(U,V)
\in   {X}_{u}\times {X}_{v}$ of the following linearized system
\begin{equation}
\left\{
\begin{array}{l}
\partial_tU-\Delta U-\nabla\cdot(U\nabla v^*)-\nabla\cdot(u^*\nabla V)
-r\, U+\mu \, p(u^*)^{p-1} U=g^0-\nabla \cdot{\bf g}^1\ \mbox{ in $Q$},\\
\partial_tV-\Delta V-p(u^*)^{p-1}U-f^*V\,1_{\Omega_c}=g_v\ \mbox{ in $Q $},\\
U(0)=V(0)=0\ \mbox{ in }\ \Omega,\\
\left(-\nabla U-U\nabla v^*-u^*\nabla V+{\bf g}^1\right)\cdot{\bf n}=0\ \mbox{ on }\ (0,T)\times\partial\Omega,\\
\partial_{\bf n}V=0\ \mbox{ on }\ (0,T)\times\partial\Omega.\label{eq1e0-1}
\end{array}
\right.
\end{equation}
 \end{proposition}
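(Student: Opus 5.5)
We will prove Proposition~\ref{v1} with a fixed-point argument on the $V$-component, using the two linear results already exploited in Theorem~\ref{strong}. Corollary~\ref{lem-A} is the non-hilbertian $X_{5/2}$ regularity for $\partial_t V-\Delta V - aV = g$ with $a\in L^{5/2}(L^{5/2+})$. Lemma~\ref{lem-B} is the $X_u$-regularity for the convection--reaction problem with ${\bf c}=\nabla v^*\in L^5(Q)$ and reaction coefficient in $L^{5/2}(Q)$.

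\textbf{Step 1: the $U$-equation for frozen $\overline V$.} Given $\overline V\in \widehat X_v$, we solve
\[
\partial_tU-\Delta U-\nabla\cdot(U\nabla v^*)+(\mu p (u^*)^{p-1}-r)U=g^0-\nabla\cdot({\bf g}^1-u^*\nabla\overline V),\qquad U(0)=0.
\]
Since $u^*\in L^{\beta(p)}(Q)$ and $\nabla\overline V\in L^5(Q)$, Hölder with $\beta(p)=\tfrac53\alpha(p)$ gives $u^*\nabla\overline V\in L^{5\alpha(p)/(\alpha(p)+3)}(Q)$. This is exactly the admissible class for ${\bf g}^1$. We also have $(u^*)^{p-1}\in L^{\beta(p)/(p-1)}\subset L^{5/2}(Q)$. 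Lemma~\ref{lem-B} then yields a unique $U\in \widehat X_u$ with
\[
\|U\|_{X_u}\le C\big(\|g_u\|_{Z'}+\|u^*\|_{L^{\beta(p)}}\|\nabla\overline V\|_{L^5}\big).
\]

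\textbf{Step 2: the $V$-equation.} Next we solve $\partial_tV-\Delta V-f^*V1_{\Omega_c}=g_v+p(u^*)^{p-1}U$ with $V(0)=0$ by Corollary~\ref{lem-A}. This needs $(u^*)^{p-1}U\in L^{5/2}(Q)$, which holds by Hölder: $(u^*)^{p-1}\in L^{\beta(p)/(p-1)}$ and $U\in L^{\beta(p)}$, and $p/\beta(p)\le 2/5$ since $\beta(p)=\max\{5,5(p-1)\}$. For $p\le 2$ this gives $\beta=5$ and exponent $5/p\ge 5/2$. For $p>2$ it gives $\beta/p=5(p-1)/p\ge 5/2$. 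So $V\in \widehat X_v$. This defines a map $\Phi:\overline V\mapsto V$ on $\widehat X_v$ that is affine and continuous.

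\textbf{Step 3: fixed point, the main obstacle.} The composite bound $\|V\|_{X_v}\lesssim \|\nabla\overline V\|_{L^5}$ carries no smallness, so a direct contraction fails. We would first restrict to a short interval $(0,T_1)$. The constants in Lemma~\ref{lem-B} and Corollary~\ref{lem-A} are uniform for subintervals, while $\|u^*\|_{L^{\beta(p)}((0,T_1)\times\Omega)}\to0$ and $\|(u^*)^{p-1}\|_{L^{\beta/(p-1)}((0,T_1)\times\Omega)}\to0$ as $T_1\to0$ by absolute continuity of the integral. With zero initial data this makes $\Phi$ a contraction on $\widehat X_v(0,T_1)$. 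We then cover $[0,T]$ by finitely many intervals chosen by equi-integrability, restarting from the data $(U(T_k),V(T_k))\in L^{\alpha(p)}\times W^{6/5,5/2}$. This uses the nonzero-initial-data versions of both linear results, which we take as granted (Lemma~\ref{lem-B} allows $w_0\in L^{\alpha(p)}$). Gluing the pieces is legitimate because the variational formulation is local in time. The delicate point is to check that the smallness is really uniform, i.e.\ that the linear constants do not blow up as the intervals shrink and restart. If that fails, the fallback is Leray--Schauder: prove a priori estimates for $\lambda\Phi$ by an energy argument. Test the $U$-equation by $|U|^{\alpha(p)-2}U$ and the $V$-equation by suitable powers, absorb the cross terms $u^*\nabla V\cdot\nabla(|U|^{\alpha-2}U)$ and $(u^*)^{p-1}U$ using the $L^5$-gradient and $L^{5/2}$-coefficient integrability, and close with Gronwall. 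Compactness of $\Phi$ would come from Aubin--Lions.

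\textbf{Step 4: uniqueness.} Uniqueness follows from linearity. With zero data, the fixed-point argument on each subinterval gives $(U,V)=0$. Alternatively, it follows from the Gronwall estimate of Step~3, in the style of \eqref{U-3}.
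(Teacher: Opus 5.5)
Your argument is correct, but it takes a different route from the paper.

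\textbf{The paper's route.} The paper proves Proposition~\ref{v1} by invoking Lemma~\ref{corol-A}, whose proof is one global coupled energy estimate. The $U$-equation is tested by $|U|U$, with $u^*\nabla V$ treated as a ${\bf g}^1$-term. The $V$-equation is tested by $|V|V-\nabla\cdot(|\nabla V|\nabla V)$. This gives $|\nabla V|^{3/2}\in W_2$, hence $\nabla V\in L^5(Q)$, directly from the energy; it requires the boundary-curvature estimate of Appendix~B. Gronwall on ${\bf Y}=(|U|^{3/2},|V|^{3/2},|\nabla V|^{3/2})$ then closes on all of $(0,T)$ with no smallness needed. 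Afterwards, Lemma~\ref{lem-B} with $q=\alpha(p)$ and maximal regularity upgrade the solution to $X_u\times X_v$.

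\textbf{Your route.} You decouple the system and reuse only linear results already in the paper: Lemma~\ref{lem-B} for $U$ and the $X_{5/2}$ part of Corollary~\ref{lem-A} for $V$. You replace the missing smallness by localizing in time, using absolute continuity of $\|u^*\|_{L^{\beta(p)}}$. Your exponent checks are right: $u^*\nabla\overline V\in L^{5\alpha(p)/(\alpha(p)+3)}$, and $(u^*)^{p-1}U\in L^{\beta(p)/p}\subset L^{5/2}$.

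\textbf{Trade-offs.} Your approach avoids the nonstandard gradient test function and the boundary term entirely, and it gives existence and uniqueness together from the contraction. The paper's approach avoids time-splitting and yields an explicit global a priori bound in one step.

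\textbf{The point you flag as delicate is not an obstruction.} Because $\Phi$ is affine, the Lipschitz estimate only involves differences with zero data at $T_k$. For zero data, the relevant constants are bounded by those on $[0,T]$:
\begin{itemize}
\item In Lemma~\ref{lem-B}, the Gronwall constant involves $\int_{T_k}^{T_{k+1}}k(t)\,dt\le\int_0^T k(t)\,dt$.
\item For the $X_{5/2}$ maximal-regularity estimate with coefficient $-f^*1_{\Omega_c}$, extend the right-hand side by zero and use causality.
\item For the embedding $\nabla V\in L^5$, extend $V$ by zero before $T_k$.
\end{itemize}
The estimate of Lemma~\ref{lem-B} as written is not homogeneous of degree one, because of the ``$+1$'' in \eqref{lem-B-8}--\eqref{lem-B-9}. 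You should note that linearity and uniqueness let you rescale to unit data to get the linear bound you use. The restart data $(U(T_k),V(T_k))\in L^{\alpha(p)}\times W^{6/5,5/2}$ only affect whether $\Phi$ is well defined, not its Lipschitz constant. So the intervals can be fixed a priori from $u^*$ alone.

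\textbf{Your fallback would not close as sketched.} Testing the $V$-equation only by powers of $V$ gives no control of $\nabla V$ in $L^5(Q)$. That is exactly what the cross term $u^*\nabla V$ requires, and it is why the paper tests with $-\nabla\cdot(|\nabla V|\nabla V)$.
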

 
\begin{proof} 
We recall that $Y_v=L^{5/2}(Q)$ and if $g_u\in Y_u$, then $g_u=g^0-\nabla\cdot{\bf g}^1$ with $g^0\in L^{5\alpha(p)/(2\alpha(p)+3)}
(Q)$ and ${\bf g}^1\in L^{5\alpha(p)/(\alpha(p)+3)}(Q)$. 
Since $(u^*,v^*,f^*)\in X_u\times X_v\times L^{5/2}(L^{5/2+}(\Omega_c))$, in particular $(u^*,\nabla v^*)\in L^{\beta(p)}(Q)\times L^5(Q)$. Thus,  by applying Lemma \ref{corol-A} 
for $ a=-r+\mu p(u^*)^{p-1}\in L^{5/2}(Q)$ 
 $c=-\nabla v^*\in L^{5}(Q)$, $d=-u^*\in L^5(Q)$, $\beta_1=-f^*\,1_{\Omega_c}\in L^{5/2}(L^{5/2+})$ and $\beta_2=-p(u^*)^{p-1}\in L^{5}(Q),$ we deduce that there exists a unique $(U,V)\in X_u\times X_v$ solution of \eqref{eq1e0-1}.
\end{proof}
\begin{remark}\label{rem_dual}
As consequence of Proposition \ref{v1} and Lemma \ref{der-oper}, the mapping 
$${\mathcal A}:=\frac{\partial\mathcal{S}(u^*,v^*,f^*)}{\partial(u,v)}: (U,V) \in   {X}_{u}\times {X}_{v} \longmapsto (g_u,g_v)\in Y_u\times Y_v$$
 is linear, bijective  and continuous. Thus, by applying the Banach closed range theorem (see, for instance, \cite[Theorem 3.E]{Zeidler}) it holds that the dual mapping 
 $${\mathcal A}': (\sigma,\eta)\in Y_u'\times Y_v'   \longmapsto     {\mathcal A}'(\sigma,\eta)\in {X}_{u} '\times {X}_{v}' $$ is also linear, bijective and continuous.
\end{remark}

 \subsection{Local differentiability of the control-to-state mapping}

\begin{theorem}\label{v9}

Let $(u^*,v^*,f^*)\in {X}_{u}  \times {X}_{v} \times L^{5/2}(L^{5/2+}(\Omega_c))$ such that $\mathcal{S}(u^*,v^*,f^*)=\bf 0$, 
then the following statements hold:
\begin{enumerate}
\item [i)] There exist neighborhoods $\mathcal{B}_{f ^*}$ of $f^*$ in $ L^{5/2}(L^{5/2+}(\Omega_c))$ 
and $\mathcal{B}_{(u^*,v^*)}$ of $(u^*,v^*)$ in $X_u\times X_v$ such that, for any $f\in\mathcal{B}_{f^*}$,  
system (\ref{eq1})-(\ref{eq2}) or (\ref{eq1b})-(\ref{eq2}) with control $f$ has a unique  solution $(u_{f},v_{f})\in \mathcal{B}_{(u^*,v^*)}.$ 
\item  [ii)] The map $\mathcal{E}:\mathcal{B}_{f^*} \rightarrow \mathcal{B}_{(u^*,v^*)}$ given by $\mathcal{E}(f)=(u_{f},v_{f})$ is of class $C^1.$ 
\item  [iii)] If $(\overline{U},\overline{V}):=D\mathcal{E}(f)[F],$ for some $f\in \mathcal{B}_{f^*}$ and $F\in L^{5/2}(L^{5/2+}(\Omega_c))$,
 then $(\overline{U},\overline{V})$ 
  is the unique  solution in $\mathcal{B}_{(u^*,v^*)}$ of the ``linearized'' problem 
\begin{align}
	\left\{
	\begin{array}
		[c]{lll}%
		\partial_t \overline{U}-\Delta \overline{V} -\nabla\cdot (\overline{U}\nabla v_{f})
		- \nabla\cdot(u_{f}\nabla \overline{V})
		-r \overline{U} + \mu p(u_{f})^{p-1} \overline{U}=0\ \mbox{ in $Q$,} \\
		\partial_t \overline{V} -\Delta \overline{V} + \overline{V} - p (u_{f})^{p-1} {\overline{U}} - f\overline{V}1_{\Omega_c}
		= F v_f 1_{\Omega_c}\ \mbox{ in $Q$,}\\
		\overline{U}(0)=0, \ \overline{V}(0)=0\ \mbox{ in $\Omega$,}\\	
		\partial_{\bf n} \overline{U}=0,\ \partial_{\bf n} \overline{V}=0\ \mbox{ on $(0,T)\times\partial\Omega$.}
	\end{array}
	\right.  \label{eq1eb}%
\end{align}
\end{enumerate}
\end{theorem}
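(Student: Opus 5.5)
The plan is to apply the Implicit Function Theorem in Banach spaces to the equality operator $\mathcal{S}$ of \eqref{oper-s}. Because the initial data $(u_0,v_0)$ are fixed, I would first restrict to the affine manifold $\mathbb{M}$. Concretely, I define
\[
\widetilde{\mathcal{S}}(U,V,f):=\mathcal{S}(u^*+U,v^*+V,f),
\qquad (U,V)\in \widehat{X}_u\times\widehat{X}_v,\quad f\in L^{5/2}(L^{5/2+}(\Omega_c)),
\]
with values in $Y_u\times Y_v$. Here $\widehat{X}_u,\widehat{X}_v$ are the closed subspaces with zero initial value, so they are Banach spaces. Then $\widetilde{\mathcal{S}}(0,0,f^*)=\mathbf 0$. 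Lemma \ref{der-oper} shows that $\mathcal{S}$ is of class $C^1$, and therefore so is $\widetilde{\mathcal{S}}$.

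The key hypothesis is that the partial derivative
\[
\frac{\partial\widetilde{\mathcal{S}}}{\partial(U,V)}(0,0,f^*)=\frac{\partial\mathcal{S}}{\partial(u,v)}(u^*,v^*,f^*):\widehat{X}_u\times\widehat{X}_v\to Y_u\times Y_v
\]
is an isomorphism. Its explicit form is given by \eqref{sss-2-u}--\eqref{sss-2-v} with $F=0$. For surjectivity and injectivity I would invoke Proposition \ref{v1}. For every $(g_u,g_v)\in Y_u\times Y_v$ it yields a unique $(U,V)\in X_u\times X_v$ with $U(0)=V(0)=0$ solving the linearized system \eqref{eq1e0-1}, which is exactly $\mathcal{S}'_{(u,v)}[U,V]=(g_u,g_v)$; this pair lies in $\widehat X_u\times\widehat X_v$. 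Continuity of the operator follows from Lemma \ref{der-oper}. The bounded inverse theorem then gives continuity of the inverse, as already noted in Remark \ref{rem_dual}.

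One point needs a check here. The norm of $X_u$ is defined through the representation $(g^0,\mathbf g^1,w_0)$, so I must verify that $\mathcal{S}_1$ really maps into $Z'$ consistently with that norm. I would also verify that elements of $X_u$ with $w_0=0$ form a closed subspace. Both should follow from Proposition \ref{ident} and the definition of $X_u$. Because this identification of norms is delicate, I regard it as the main technical obstacle, together with making sure the hypotheses of Lemma \ref{corol-A} are satisfied uniformly.

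With these facts, the Implicit Function Theorem gives neighborhoods $\mathcal{B}_{f^*}$ and $\mathcal{B}_{(u^*,v^*)}$ and a $C^1$ map $f\mapsto (u_f,v_f)$. This map is the unique zero of $\mathcal{S}(\cdot,\cdot,f)$ in $\mathcal{B}_{(u^*,v^*)}$ having the prescribed initial data. A zero of $\mathcal{S}$ in $X_u\times X_v$ is a strong solution of (\ref{eq1b})-(\ref{eq2}); the same argument applies to (\ref{eq1}) with $r=\mu=0$. This proves i) and ii). Uniqueness in the neighborhood is part of the theorem, and it is also consistent with Step 3 of Theorem \ref{strong}.

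For iii), I would differentiate the identity $\mathcal{S}(\mathcal{E}(f),f)=\mathbf 0$ by the chain rule:
\[
\frac{\partial\mathcal{S}}{\partial(u,v)}(u_f,v_f,f)[D\mathcal{E}(f)F]+\frac{\partial\mathcal{S}}{\partial f}(u_f,v_f,f)[F]=\mathbf 0 .
\]
From \eqref{sss-2-v}, $\partial_f\mathcal{S}[F]=(0,-Fv_f1_{\Omega_c})$, and $Fv_f1_{\Omega_c}\in L^{5/2}(Q)$ because $v_f\in L^\infty(L^{\infty-})$. Substituting this gives system \eqref{eq1eb}, with $\overline U(0)=\overline V(0)=0$.

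It remains to show that this system determines $(\overline U,\overline V)$ uniquely. For that I need $\partial_{(u,v)}\mathcal{S}(u_f,v_f,f)$ to be invertible at every $f\in\mathcal{B}_{f^*}$, not only at $f^*$. I would obtain this by applying Proposition \ref{v1} at the point $(u_f,v_f,f)$, which is admissible since it belongs to $X_u\times X_v\times L^{5/2}(L^{5/2+}(\Omega_c))$. Alternatively, the set of isomorphisms is open, so shrinking the neighborhoods also gives invertibility.
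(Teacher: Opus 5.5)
Your proposal is correct and follows the paper's proof. It obtains $C^1$-regularity of $\mathcal{S}$ from Lemma \ref{der-oper} and invertibility of $\partial_{(u,v)}\mathcal{S}$ from Proposition \ref{v1}, then applies the Implicit Function Theorem and, for iii), the chain rule to $\mathcal{S}(\mathcal{E}(f),f)=\mathbf{0}$; your restriction to $\widehat{X}_u\times\widehat{X}_v$ (i.e.\ working on the affine set $\mathbb{M}$) is in fact needed for injectivity, because $\mathcal{S}$ carries no initial condition and its linearization has a nontrivial kernel on all of $X_u\times X_v$, a point the paper's proof glosses over when it calls $\partial_{(u,v)}\mathcal{S}$ an isomorphism on the whole space.
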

\begin{proof}
From Lemma \ref{der-oper} we have that equality operator $\mathcal{S}\in C^1$, and 
\begin{eqnarray*}
\frac{\partial\mathcal{S}(u^*,v^*,f^*)}{\partial(u,v)}(\overline{U},\overline{V})=
\begin{pmatrix}
\partial_t \overline{U}-\Delta \overline{U} -\nabla\cdot (\overline{U}\nabla v^*)- \nabla\cdot(u^*\nabla \overline{V})-r \overline{U}+\mu p(u^*)^{p-1}\overline{U} \\
\partial_t \overline{V} -\Delta \overline{V} + \overline{V} -p (u^*)^{p-1} {\overline{U}}-f^*\overline{V}1_{\Omega_c}
\end{pmatrix}.
\end{eqnarray*}
Now, applying Proposition \ref{v1},  we get that
  $\frac{\partial\mathcal{S}(u^*,v^*,f^*)}{\partial(u,v)}$ is an isomorphism from 
 $X_{ u} \times X_{v} $ onto 
 $ Y_u\times Y_v$.
 Since $(u_{f^*},v_{f^*})$ is the  solution of (\ref{eq1})-(\ref{eq2}) or (\ref{eq1b})-(\ref{eq2}) with data $f^*$,  
 in particular $\mathcal{S}(u_{f^*},v_{f^*},f^*)=(0,0)$.
 Thus, the Implicit Function Theorem in Banach spaces (see, for instance, \cite[Theorem 3.13]{Kumar}) implies the existence of open neighborhoods 
 $\mathcal{B}_{f^*}$ of $f^*$ in $L^{5/2}(L^{5/2+}(\Omega_c))$ and $\mathcal{B}_{(u^*,v^*)}$ of $(u^*,v^*)$ in $X_u\times X_v$,  and a $C^1$ control-to-state map 
 $\mathcal{E}:\mathcal{B}_{f^*} \rightarrow \mathcal{B}_{(u^*,v^*)}$ such that $\mathcal{S}(\mathcal{E}(f),f)=(0,0),$ for each $f\in \mathcal{B}_{f^*}$. 
Then, $\mathcal{E}(f)=(u_{f},v_{f})$ is the  solution (\ref{eq1})-(\ref{eq2}) or (\ref{eq1b})-(\ref{eq2}) with control $f$ and initial data $(u_0,v_0).$ In addition,
\begin{eqnarray*}
\frac{\partial\mathcal{S}(u_f,v_f,f)}{\partial(u,v)} (D\mathcal{E}(f)[F])
+
\frac{\partial\mathcal{S}(u_f,v_f,f)}{\partial f}[F]=(0,0),\quad  \forall\, F\in  L^{5/2}(L^{5/2+}(\Omega_c)).
\end{eqnarray*}
Considering $(\overline{U},\overline{V})=D\mathcal{E}(f)[F]$, and accounting that 
$$  
\frac{\partial\mathcal{S}(u_f,v_f,f)}{\partial f}[F]=(0, -F v_f 1_{\Omega_c}),
$$
we deduce that $(\overline{U},\overline{V})$ satisfies (\ref{eq1eb}).
 \end{proof}
 
 \subsection{Differentiability of the functional cost}

\begin{lemma}\label{DerJ} The functional $J:{X}_{u}  \times {X}_{v} \times L^{5/2}(L^{5/2+}(\Omega_c))\rightarrow \mathbb{R}$ is Fr\'echet differentiable and
its derivative in $z=(u,v,f)\in {X}_{u}  \times {X}_{v} \times L^{5/2}(L^{5/2+}(\Omega_c)),$ in the direction 
$\overline z=(U,V,F)\in {X}_{u}  \times {X}_{v} \times L^{5/2}(L^{5/2+}(\Omega_c))$ is 
 \begin{eqnarray}\label{derJ}
J'(z)[\overline z]&=&\gamma_u\left(\int_0^T\int_\Omega {\rm sgn}(u-u_d)\vert u-u_d\vert^{(5p-2)/2}U
+\int_0^T\int_\Omega {\rm sgn}(u-u_d)\vert u-u_d\vert^{7/3}U\right)
\nonumber\\
&&+\gamma_v\int_0^T\int_\Omega (v-v_d)V
+\gamma_f\int_0^T\int_{\Omega_c}\frac{{\rm sgn}(f)\vert f\vert^{3/2+}}{\|f\|^{0+}_{L^{5/2+}(\Omega_c)}}F.
\end{eqnarray}
\end{lemma}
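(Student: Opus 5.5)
The plan is to split $J$ into its four pieces and differentiate each separately. $J$ is a sum of compositions of continuous linear embeddings with convex Nemytskii-type power functionals, so Fréchet differentiability of $J$ on $X_u\times X_v\times L^{5/2}(L^{5/2+}(\Omega_c))$ follows once each piece is differentiable on a space into which the corresponding component embeds continuously. We use the following embeddings.
\begin{itemize}
\item $X_u\hookrightarrow L^{\beta(p)}(Q)$. Since $\beta(p)=\max\{5,5(p-1)\}\ge 5p/2$ only when $p\le 2$, this needs care. Instead we use the hypothesis that $J$ is defined on $L^{5p/2}(Q)\cap L^{10/3}(Q)$ for the $u$-component, and differentiate $J$ as a map on $\bigl(L^{5p/2}(Q)\cap L^{10/3}(Q)\bigr)\times L^2(Q)\times L^{5/2}(L^{5/2+}(\Omega_c))$, in the direction of $u$ restricted to admissible perturbations. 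If needed, we note that $X_u\hookrightarrow L^{\beta(p)}(Q)$ and interpolate with the available bound.
\item $X_v\hookrightarrow L^\infty(L^{\infty-})\subset L^2(Q)$, by Lemma~\ref{An}.
\end{itemize}

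\textbf{The $u$-terms.} For $s\in\{5p/2,\,10/3\}$, the functional $\Phi_s(w)=\frac1s\int_0^T\!\!\int_\Omega|w-u_d|^s$ is Fréchet differentiable on $L^s(Q)$, with derivative $\int_0^T\!\!\int_\Omega{\rm sgn}(w-u_d)|w-u_d|^{s-1}U$. Indeed, the scalar map $y\mapsto\frac1s|y|^s$ is $C^1$ with derivative $|y|^{s-2}y$ (here $s>1$). By the mean value theorem, $\Phi_s(w+U)-\Phi_s(w)-\Phi_s'(w)U=\int_0^T\!\!\int_\Omega\bigl(|\theta|^{s-2}\theta-|w-u_d|^{s-2}(w-u_d)\bigr)U$ for some intermediate $\theta$. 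Hölder's inequality bounds this by $\||\theta|^{s-2}\theta-|w-u_d|^{s-2}(w-u_d)\|_{L^{s'}}\|U\|_{L^s}$. The first factor tends to $0$ by continuity of the Nemytskii operator $L^s\to L^{s/(s-1)}$, which follows from dominated convergence along subsequences, exactly as in \eqref{sss-4-1}. The exponents are $s-1=(5p-2)/2$ and $7/3$, which gives the first line of \eqref{derJ}.

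\textbf{The $v$-term.} The term $\frac{\gamma_v}{2}\|v-v_d\|^2_{L^2(Q)}$ is quadratic, so its derivative is immediate.

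\textbf{The control term.} This is the main obstacle, since $G(f)=\frac{1}{5/2}\int_0^T N(f(t))^{5/2}\,dt$ with $N(g)=\|g\|_{L^{q}(\Omega_c)}$, $q=5/2+$, is a mixed norm. We proceed in four steps.
\begin{itemize}
\item For fixed $t$, the map $g\mapsto\frac1q N(g)^q$ is Fréchet differentiable on $L^q(\Omega_c)$ with derivative ${\rm sgn}(g)|g|^{q-1}$, by the same argument as for the $u$-terms.
\item By the chain rule, $\frac{2}{5}N(g)^{5/2}=\frac25\bigl(qH(g)\bigr)^{5/(2q)}$, where $H(g)=\frac1q N(g)^q$. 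Differentiating gives ${\rm sgn}(g)|g|^{q-1}N(g)^{5/2-q}$, i.e.\ ${\rm sgn}(g)|g|^{3/2+}/N(g)^{0+}$. At $g=0$ we check directly that the derivative is $0$, using $5/2>1$.
\item We lift to $L^{5/2}(0,T;L^q)$. Write $\psi(g)={\rm sgn}(g)|g|^{q-1}N(g)^{5/2-q}$. Then $\|\psi(g)\|_{L^{q'}}=N(g)^{3/2}$, so $\psi$ maps $L^{5/2}(L^q)$ into $L^{5/3}(L^{q'})=\bigl(L^{5/2}(L^q)\bigr)'$.
\item The pointwise-in-time remainder is $o(N(F(t)))$ and is dominated by $C\bigl(N(f)^{3/2}+N(F)^{3/2}\bigr)N(F)$. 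Integrating in time, applying Hölder's inequality with exponents $(5/3,5/2)$, and using continuity of $f\mapsto\psi(f)$ from $L^{5/2}(L^q)$ to $L^{5/3}(L^{q'})$ (again by dominated convergence on subsequences) yields a remainder that is $o(\|F\|_{L^{5/2}(L^q)})$.
\end{itemize}
Summing the four pieces gives \eqref{derJ}.
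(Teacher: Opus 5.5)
Your argument is correct in substance, but one side claim is false, and you should remove the workaround you attach to it. You assert that $\beta(p)=\max\{5,5(p-1)\}\ge 5p/2$ only for $p\le 2$. In fact it holds for every $p>1$: $5\ge 5p/2$ when $p\le 2$, and $5(p-1)\ge 5p/2$ when $p\ge 2$. Combined with $\beta(p)\ge 5>10/3$ and the boundedness of $Q$, this gives the continuous embedding $X_u\hookrightarrow L^{\beta(p)}(Q)\hookrightarrow L^{5p/2}(Q)\cap L^{10/3}(Q)$; the paper uses exactly this in the adjoint section. So the $u$-part of $J$ is a functional you show to be differentiable on $L^{5p/2}(Q)\cap L^{10/3}(Q)$, composed with a bounded linear map, and the chain rule finishes. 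The same holds for $X_v\hookrightarrow L^2(Q)$. Drop the hedges about differentiating ``in the direction of $u$ restricted to admissible perturbations'' or ``interpolating with the available bound''. Differentiability along a restricted set of directions would not give Fr\'echet differentiability on $X_u$, and if the embedding really failed, $J$ would not even be defined on $X_u$.

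With that fixed, your route differs from the paper's mainly in being self-contained. The paper proves nothing by hand. It notes that every term of $J$ has the form $\frac1s\|y\|^s_{L^s(L^q)}$ with $1<s,q<\infty$, and cites a known theorem (Leonard, Theorem~2.5; see also Kunisch, Corollary~A.4) giving Fr\'echet differentiability with gradient $\|y(t)\|^{s-q}_{L^q}|y|^{q-2}y$. It then reads off the formula, leaving the embeddings and the chain rule implicit. You instead prove that theorem in the cases needed. For the equal-exponent terms you use the mean value theorem plus continuity of the Nemytskii map $L^s\to L^{s'}$. For the mixed norm you use a two-level argument. First you differentiate $g\mapsto\frac25\|g\|^{5/2}_{L^q(\Omega_c)}$ on $L^q(\Omega_c)$, treating $g=0$ separately since $5/(2q)<1$ but $5/2>1$. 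Then you lift to $L^{5/2}(L^q)$ using $\psi(g)={\rm sgn}(g)|g|^{q-1}\|g\|_{L^q}^{5/2-q}$, the identity $\|\psi(g)\|_{L^{q'}}=\|g\|^{3/2}_{L^q}$, continuity of $f\mapsto\psi(f)$ into $L^{5/3}(L^{q'})$, and H\"older in time. The paper's citation is shorter and treats all four terms uniformly. Your version makes explicit why instants with $f(t)=0$ cause no trouble and why the gradient lies in $(L^{5/2}(L^q))'=L^{5/3}(L^{q'})$, and it states the embeddings the paper takes for granted.
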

\begin{proof}The cost functional is sum of real-valued functionals of $L^p$-$L^q$-norms of the form $\Psi_{p,q}(y)=\frac{1}{p}\Vert y\Vert^p_{L^p(L^q)}$, with $1<p,q<\infty$. Following \cite[Theorem 2.5]{leonard_1974} (see also, \cite[Corollary A.4]{Kunisch}), its Fr\'echet derivative is given by 
$$
\Psi'_{p,q}(y)[h]
=\int_0^T
\left(\int_\Omega \Vert y(t,\cdot)\Vert^{p-q}_{L^q} \vert y(t,x)\vert^{q-2}y(t,x)
\,h(t,x)\right).
$$
Recalling the definition of the sign function and applying the above representation, we get directly the expression (\ref{derJ}).
 \end{proof}

\subsection{Well-posedness and regularity of the adjoint problem}

\

Let $(u^*,v^*,f^*)\in\mathcal{S}_{ad}$ be any local optimal solution of problem \eqref{C-3}. By using the adjoint operator ${\mathcal A}'$ defined in Remark \ref{rem_dual}, we can rewrite the  adjoint system \eqref{adjz0} as 
$$
{\mathcal A}'(\sigma,\eta)=
(\gamma_u h(u^*-u_d),\gamma_v (v^*-v_d))
$$ 
where $h(w):=\left(|w|^{(5p-4)/2}+|w|^{4/3}\right)w$. Then, since ${\mathcal A}'$ is bijective (Remark \ref{rem_dual}) it suffices to prove $h(u^*-u_d)\in X_u'$ and $v^*-v_d\in X_v'$. Indeed,  
since $u_d\in L^{5p/2}(Q)\cap L^{10/3}(Q)$ then $|u^*-u_d|^{(5p-2)/2}\in L^{5p/(5p-2)}(Q)=L^{5p/2}(Q)'$ and $|u^*-u_d|^{7/3}\in L^{10/7}(Q)=L^{10/3}(Q)'$. Therefore, since $X_u\subset L^{\beta(p)}(Q)\subset L^{5p/2}(Q)\cap L^{10/3}(Q)$, then $h(u^*-u_d)\in X_u'$. On the other hand, since $v_d\in L^{2}(Q)$ and $X_v\subset L^2(Q)$,  then 
 $v^*-v_d\in X_v'$.
 
Therefore, by the previous duality argument and  taking into account that
 $Y_u'=Z''=Z$ (because $Z$ is a reflexive Banach space) and $Y_v'=L^{5/3}(Q)$,  
  the following result holds.
 
\begin{proposition}
There exists a unique  solution   $(\sigma,\eta)\in Z \times L^{5/3}(Q)$ of the adjoint problem (\ref{adjz0}).
In particular, $\sigma\in L^{5\alpha(p)/(3\alpha(p)-3)}(Q)$ with $\nabla\sigma\in L^{5\alpha(p)/(4\alpha(p)-3)}(Q)$.
\end{proposition}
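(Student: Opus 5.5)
The proof is a duality argument built on Remark~\ref{rem_dual}. We interpret the adjoint system \eqref{adjz0} in the very weak (transposition) sense: $(\sigma,\eta)\in Y_u'\times Y_v'$ solves \eqref{adjz0} if
\begin{equation*}
\langle \mathcal{A}(U,V),(\sigma,\eta)\rangle_{(Y_u\times Y_v)\times (Y_u'\times Y_v')}=\gamma_u\int_0^T\!\!\int_\Omega h(u^*-u_d)\,U+\gamma_v\int_0^T\!\!\int_\Omega (v^*-v_d)\,V
\end{equation*}
for all $(U,V)\in X_u\times X_v$ with $U(0)=V(0)=0$. Here $\mathcal{A}=\partial\mathcal{S}(u^*,v^*,f^*)/\partial(u,v)$ is the linearized operator, given explicitly by \eqref{sss-2-u}--\eqref{sss-2-v} with $F=0$. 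Formally integrating by parts in time and space in this identity, using the homogeneous initial data for $(U,V)$, the final conditions $\sigma(T)=\eta(T)=0$ and the Neumann conditions, reproduces exactly the two equations in \eqref{adjz0}. In particular, the term $-\int u^*\nabla V\cdot\nabla\sigma$ becomes $-\nabla\cdot(u^*\nabla\sigma)$ tested against $V$, and the term $-p(u^*)^{p-1}U$ from the $V$-equation produces $-p(u^*)^{p-1}\eta$ tested against $U$. This identifies the weak formulation as $\mathcal{A}'(\sigma,\eta)=(\gamma_u h(u^*-u_d),\gamma_v(v^*-v_d))$ in $X_u'\times X_v'$.

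\emph{Step 1: identify the dual spaces.} We check that $Y_u'=Z''=Z$ and $Y_v'=(L^{5/2}(Q))'=L^{5/3}(Q)$. For the first, $Z$ is isometric, via $\varphi\mapsto(\varphi,\nabla\varphi)$ (as in the proof of Proposition~\ref{ident}), to a closed subspace of the reflexive space $E$. It is therefore reflexive, and with $Y_u=Z'$ this gives $Y_u'=Z$.

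\emph{Step 2: the right-hand side lies in $X_u'\times X_v'$.} We already have $|u^*-u_d|^{(5p-2)/2}\in L^{5p/(5p-2)}(Q)$ and $|u^*-u_d|^{7/3}\in L^{10/7}(Q)$, because $u_d\in L^{5p/2}(Q)\cap L^{10/3}(Q)$ and $u^*\in X_u\subset L^{\beta(p)}(Q)\subset L^{5p/2}(Q)\cap L^{10/3}(Q)$. By H\"older, $U\mapsto\int h(u^*-u_d)U$ is then continuous on $X_u$. Similarly, $v^*-v_d\in L^2(Q)$ and $X_v\subset L^2(Q)$, so $V\mapsto \int (v^*-v_d)V$ is continuous on $X_v$.

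\emph{Step 3: existence and uniqueness.} By Remark~\ref{rem_dual}, $\mathcal{A}:X_u\times X_v\to Y_u\times Y_v$ is a continuous bijection, which follows from Proposition~\ref{v1} together with the open mapping theorem. Hence the adjoint map $\mathcal{A}':Y_u'\times Y_v'\to X_u'\times X_v'$ is also a bijection. Applying $(\mathcal{A}')^{-1}$ to the data from Step 2 gives a unique $(\sigma,\eta)\in Z\times L^{5/3}(Q)$. The regularity $\sigma\in L^{5\alpha(p)/(3\alpha(p)-3)}(Q)$ with $\nabla\sigma\in L^{5\alpha(p)/(4\alpha(p)-3)}(Q)$ is just the definition of $Z$.

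\emph{Main obstacle.} The delicate point is Step 1 together with the claim that the abstract equation $\mathcal{A}'(\sigma,\eta)=\cdots$ really is a weak formulation of \eqref{adjz0}, including the final and boundary conditions. The issue is that $X_u$ is defined through heat problems with data in $Z'$, so the admissible test functions $(U,V)$ are rich enough only in a duality sense. I would handle this by restricting $(U,V)$ to smooth functions vanishing at $t=0$, which are dense in $\widehat X_u\times\widehat X_v$ and compatible with the Neumann conditions, to read off the equations in the sense of distributions. The conditions at $t=T$ and on $\partial\Omega$ are then encoded by the freedom in $U(T),V(T)$ and in the boundary traces.
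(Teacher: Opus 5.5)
Your proposal is correct and is essentially the paper's argument. Like the paper, you rewrite the adjoint system as $\mathcal{A}'(\sigma,\eta)=(\gamma_u h(u^*-u_d),\gamma_v(v^*-v_d))$, check that the data lie in $X_u'\times X_v'$ via $X_u\subset L^{\beta(p)}(Q)\subset L^{5p/2}(Q)\cap L^{10/3}(Q)$ and $X_v\subset L^2(Q)$, and invert the bijection $\mathcal{A}'$ using $Y_u'=Z''=Z$ and $Y_v'=L^{5/3}(Q)$. Your extra details only make explicit what the paper leaves implicit: $Z$ is reflexive as a closed subspace of $E$, and you test only with pairs satisfying $U(0)=V(0)=0$, which is exactly the setting $\widehat{X}_u\times\widehat{X}_v$ where $\mathcal{A}$ is actually injective.
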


Moreover, we can obtain more regularity of  (\ref{adjz0}) whenever $p\le 4/3$.

\begin{proposition} \label{Reg-mult}
For $p\le 4/3$, the  solution of (\ref{adjz0})  satisfies $(\sigma, \eta)\in W_2 \times  L^2(Q)$. 
\end{proposition}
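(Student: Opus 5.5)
To show that for $p\le 4/3$ the adjoint solution satisfies $(\sigma,\eta)\in W_2\times L^2(Q)$, I would repeat the duality argument that gave existence in $Z\times L^{5/3}(Q)$, but with smaller data spaces. If $p\le 4/3$ then $\alpha(p)=3$ and $\beta(p)=5$. I would first show that the linearized operator $\mathcal A$ of Proposition \ref{v1} is also an isomorphism between a weaker pair of spaces, $\widetilde X_u\times\widetilde X_v\to \widetilde Y_u\times \widetilde Y_v$. Here $\widetilde Y_u:=L^2((H^1)')$ would be replaced by something like $L^1(L^2)+L^2((H^1)')=W_2'$, and $\widetilde Y_v:=L^2(Q)$. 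Transposing then gives $(\sigma,\eta)\in\widetilde Y_u'\times\widetilde Y_v'=W_2\times L^2(Q)$, provided the right-hand sides $\gamma_u h(u^*-u_d)$ and $\gamma_v(v^*-v_d)$ lie in $\widetilde X_u'\times \widetilde X_v'$. By uniqueness, this solution must coincide with the one already obtained in $Z\times L^{5/3}(Q)$, since $W_2\times L^2(Q)\subset Z\times L^{5/3}(Q)$ (easy to check with $\alpha=3$: $Z$ requires $L^{5/2}$ and $\nabla\sigma\in L^{5/3}$).

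\textbf{Step 1 (weak linearized problem).} Given $(g_u,g_v)\in W_2'\times L^2(Q)$, solve \eqref{eq1e0-1} with $U\in W_2$ and $V\in X_2$ (that is, $L^\infty(H^1)\cap L^2(H^2)$, $\partial_tV\in L^2(Q)$). The energy estimate is obtained by testing the $U$-equation with $U$ and the $V$-equation with $V-\Delta V$. The critical terms, and the bounds I would use for them, are:
\begin{itemize}
\item[(a)] $\int U\nabla v^*\cdot\nabla U$: bound it by $\|U\|_{L^{10/3}}\|\nabla v^*\|_{L^5}\|\nabla U\|$, using $\nabla v^*\in L^5(Q)$ and interpolating $L^{10/3}$ between $L^2$ and $H^1$, then close with Gronwall using $\|\nabla v^*\|_{L^5}^5\in L^1(0,T)$.
\item[(b)] $\int u^*\nabla V\cdot\nabla U$: with $u^*\in L^5(Q)$, $\nabla V\in L^{10}$-type via $H^2$, this is handled by $\|u^*\|_{L^5}\|\nabla V\|_{L^{10/3}}\|\nabla U\|$, again with Gronwall.
\item[(c)] The coupling $p(u^*)^{p-1}U$ in the $V$-equation must lie in $L^2$ against $\Delta V$, and this is where $p\le4/3$ enters. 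We have $(u^*)^{p-1}\in L^{5/(p-1)}(Q)\subset L^{15}$ and $U\in L^{10/3}$, so the product lies in $L^{r}$ with $1/r=(p-1)/5+3/10\le 1/2$ iff $p\le 2$. The mixed-norm time integrability also has to be checked: $(u^*)^{p-1}\in L^\infty(L^{3/(p-1)})$ together with $U\in L^2(L^6)$ gives $L^2(L^2)$ exactly when $3/(p-1)\ge 3$, i.e.\ $p\le 2$.
\item[(d)] The reaction term $\mu p(u^*)^{p-1}U^2$ is handled similarly, using $(u^*)^{p-1}\in L^{5/2}$-type integrability.
\end{itemize}

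I expect the genuine threshold $4/3$ to appear in the data condition of Step 2, rather than in the estimates above.

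\textbf{Step 2 (data).} I need $h(u^*-u_d)\in W_2'$. Its leading part is $|u^*-u_d|^{(5p-2)/2}$, which lies in $L^{5p/(5p-2)}(Q)$. Since $W_2\subset L^{10/3}(Q)$, duality requires $5p/(5p-2)\ge 10/7$, that is $35p\ge 50p-20$, i.e.\ $p\le 4/3$. This matches the hypothesis exactly. The second part, $|w|^{7/3}\in L^{10/7}$, pairs with $L^{10/3}$ and is fine. Similarly $v^*-v_d\in L^2(Q)\subset X_2'$. So $p\le4/3$ is precisely what puts the source of the $\sigma$-equation in $W_2'$.

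\textbf{Step 3.} With the isomorphism $\widetilde{\mathcal A}:\widetilde X_u\times\widetilde X_v\to W_2'\times L^2(Q)$ (with $\widetilde X_u$ defined analogously to $X_u$), the closed range theorem gives $\widetilde{\mathcal A}'$ bijective from $W_2\times L^2(Q)$ onto the dual. This produces $(\tilde\sigma,\tilde\eta)$, and comparing weak formulations on smooth test pairs together with uniqueness in $Z\times L^{5/3}$ yields $(\sigma,\eta)=(\tilde\sigma,\tilde\eta)$.

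\textbf{Main obstacle.} The main obstacle is Step 1: establishing existence and uniqueness for the linearized system in this weak energy class, given only $u^*\in L^5$, $\nabla v^*\in L^5$ and $f^*\in L^{5/2}(L^{5/2+})$. The term $f^*V$ tested against $\Delta V$ is the delicate one. I would bound it by $\|f^*\|_{L^{5/2}}\|V\|_{L^{10}}\|\Delta V\|$ exactly as in \eqref{B-2-1}, and otherwise mimic Lemma \ref{corol-A} at the $L^2$ level.
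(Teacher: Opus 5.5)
Your proposal is correct, but it reaches the result by a different route than the paper. The paper's proof only carries out your Step~2 computation: $h(u^*-u_d)\in L^{5p/(5p-2)}(Q)+L^{10/7}(Q)\subset L^{10/7}(Q)$ exactly when $p\le 4/3$. It then notes $\gamma_v(v^*-v_d)\in L^2(Q)$ and $(u^*)^{p-1}\in L^{5/2}(Q)$, and applies an external regularity result (\cite[Theorem A.1, case 2b)]{Andre_SICOM}) directly to the adjoint system \eqref{adjz0}.

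You instead repeat the transposition argument of Remark~\ref{rem_dual} at a lower level of regularity. You prove an $L^2$-energy isomorphism for the linearized operator onto $\widetilde Y_u\times L^2(Q)$ with $\widetilde Y_u'=W_2$, transpose it, and identify the result with $(\sigma,\eta)$ via injectivity of $\mathcal A'$ on $Z\times L^{5/3}(Q)$. The paper's version is short, but it delegates the whole analysis of the coupled adjoint system to a cited theorem. Yours is self-contained and needs only an energy estimate simpler than Lemma~\ref{corol-A}. It also shows transparently that the restriction $p\le 4/3$ enters only through the data.

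A few points should be made precise for your version to go through.

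\begin{enumerate}
\item Take $\widetilde Y_u=L^1(L^2)+L^2((H^1)')$ itself, not $W_2'$. Since $W_2$ is not reflexive, transposing against $W_2'$ would only place $\tilde\sigma$ in $W_2''$.
\item In (c), integrability of $(u^*)^{p-1}U$ is not enough; you must absorb the term. For example, $\int_\Omega (u^*)^{p-1}|U|\,|\Delta V|\le C\|(u^*)^{p-1}\|_{L^5}\|U\|^{2/5}\|U\|_{H^1}^{3/5}\|\Delta V\|\le\delta(\|U\|_{H^1}^2+\|\Delta V\|^2)+C\|u^*\|_{L^{5(p-1)}}^{5(p-1)}\|U\|^2$. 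This is exactly how $\beta_2$ is treated in Lemma~\ref{corol-A}. The term in (d) simply has a good sign.
\item The identification step needs $Y_u=Z'\hookrightarrow\widetilde Y_u$, so that $\widetilde{\mathcal A}$ really extends $\mathcal A$. For $\alpha(p)=3$ this holds: $g^0\in L^{5/3}(Q)\subset L^{10/7}(Q)\subset L^1(L^2)+L^2(L^{6/5})$ by interpolation, and ${\bf g}^1\in L^{5/2}(Q)\subset L^2(Q)$.
\end{enumerate}
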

\begin{proof}
 Since $u_d\in L^{5p/2}(Q)\cap L^{10/3}(Q)$, then $h({u^*-u_d})\in L^{5p/(5p-2)}(Q) + L^{10/7}(Q)$. In particular $\gamma_u h({u^*-u_d})\in L^{10/7}(Q)$ whenever $p\le 4/3$. On the other hand,  
$\gamma_v({v^*}-v_d)\in L^2(Q)$ and $(u^*)^{p-1}\in L^{5/2}(Q)$. Therefore, we can use  \cite[Theorem A.1, case 2b)]{Andre_SICOM} to deduce that $({\sigma},{\eta})\in W_2 \times  L^2(Q) $.
\end{proof}

%
%

\subsection{
Proof of Theorem \ref{FOC}.} \label{Sec:FOC}

\

Notice that $(u^*,v^*,f^*)\in X_u\times X_v\times L^{5/2}(L^{5/2+}(\Omega_c))$ such that $\mathcal{S}(u^*,v^*,f^*)=\bf 0$, then 
 from Theorem \ref{v9}, the control-to-state mapping is well defined locally near of $(u^*,v^*,f^*)$ in the $X_u\times X_v\times L^{5/2}(L^{5/2+}(\Omega_c))$-topology. Thus, we can define the reduced cost functional $J_0:\mathcal{B}_{f^*}\rightarrow \mathbb{R}$ by $J_0(f):=J(u_f,v_f,f)$, for any $f$ near of $f^*$ in the $L^{5/2}(L^{5/2+}(\Omega_c))$-norm and the corresponding $(u_f,v_f)$ near of $(u^*,v^*)$ in the $X_u\times X_v$-norm. 
 
We compute the derivative $J'_0(f^*)[f]$ for any  $f\in \mathcal{F}.$ Let $(U,V):=D\mathcal{E}(f^*)[f].$ Then, from Theorem \ref{v9}, the pair $(U,V)\in X_u\times X_v$ is the unique solution of
\begin{align} \label{eq1ec}
	\left\{
	\begin{array}
		[c]{lll}%
		\partial_t U-\Delta U -\nabla\cdot (U\nabla v^*)- \nabla\cdot(u^*\nabla V)-r U+\mu p(u^*)^{p-1}U=0\  \mbox{ in}\  Y_u=Z', \\
		 \noalign{\vspace{-2ex}}\\
		\partial_t V -\Delta V + V - p (u^*)^{p-1} U-f^*V1_{\Omega_c}=fv^*1_{\Omega_c}\  a.e.\ (t,x)\in Q,\\
		 \noalign{\vspace{-2ex}}\\
		U(0)=0, \quad V(0)=0\quad \mbox{in}\ \Omega,\\	
		 \noalign{\vspace{-2ex}}\\
		\partial_{\bf n} U=0,\quad \partial_{\bf n} V=0\quad \mbox{on }(0,T)\times\partial\Omega.
	\end{array}
	\right.  %
\end{align}
From the chain rule in Banach spaces (cf. \cite[Theorem 3.6]{Kumar}) and Lemma \ref{DerJ} we obtain
\begin{eqnarray}
&&J_0'(f^*)[f]=\frac{\partial J(u^*,v^*,f^*)}{\partial (u,v)}
( D\mathcal{E}(f^*)[f])+\frac{\partial J(u^*,v^*,f^*)}{\partial f}[f]\nonumber\\
&&\ \ = \frac{\partial J(u^*,v^*,f^*)}{\partial (u,v)}(U,V)
+\frac{\partial J(u^*,v^*,f^*)}{\partial f}[f]\nonumber\\
&&\ \ =\!\gamma_u\int_0^T\!\int_\Omega 
h({u^*-u_d}) 
U+\!\gamma_v\int_0^T\!\int_\Omega (v^*-v_d)V+\!\gamma_f\int_0^T\!\int_{\Omega_c} 
\frac{{\rm sgn}(f^*)\vert f^*\vert^{3/2+}}{\|f^*\|^{0+}_{L^{5/2+}(\Omega_c)}}{f}.
\label{chain2}
\end{eqnarray} 
 In this point we consider the adjoint system (\ref{adjz0}). Then, testing (\ref{adjz0})$_1$ by $U$ and (\ref{adjz0})$_2$ by $V,$  integrating by parts and using \eqref{eq1ec} we get
\begin{eqnarray}\label{gr1}
\gamma_u\int_0^T\int_\Omega 
h({u^*-u_d}) 
U+\gamma_v\int_0^T\int_\Omega (v^*-v_d)V
=\int_0^T\int_{\Omega_c} {v^*}\eta f.
\end{eqnarray}
Therefore, replacing (\ref{gr1}) into (\ref{chain2}) we conclude that  
\begin{eqnarray}\label{cop2}
J_0'(f^*)[f]=\int_0^T\int_{\Omega_c}\left(
\gamma_f 
\frac{{\rm sgn}(f^*)\vert f^*\vert^{3/2+}}{\|f^*\|^{0+}_{L^{5/2+}(\Omega_c)}}+{v^*}\eta\right)f , 
\quad  \forall {f}\in\mathcal{F}.
\end{eqnarray}
In particular, identifying $(L^{5/2}(L^{5/2+}))' \equiv L^{5/3}(L^{5/3-})$, we have the identification
$$
J_0'(f^*) \equiv \gamma_f
\frac{{\rm sgn}(f^*)\vert f^*\vert^{3/2+}}{\|f^*\|^{0+}_{L^{5/2+}(\Omega_c)}}
+{v^*}\eta.
$$
\subsection{
Proof of Corollary~\ref{optim-cond}.} \label{Sec:optim-cond}

\

Now, we are in position to derive the first-order optimality conditions and establish the optimality system. 
Let $f^*$ be any local optimal control of Problem (\ref{C-3}), then $f^*$ is a solution of the reduced (local) optimal control problem 
\begin{equation}\label{C-3b}
\left\{
\begin{array}{lc}
\min J_0(f),\\
\mbox{subject to}\ f\in \mathcal{B}_{f^*}\cap \mathcal{F}.
\end{array}
\right.
\end{equation}
Taking into account that $\mathcal{F}$ is convex and $\mathcal{B}_{f^*}$ is an open  neighbourhood of $f^*,$ for each $f\in \mathcal{F}$ there exists $\epsilon_f>0,$ depending of $f,$ such that $f^\epsilon=f^*+\epsilon(f-f^*)\in \mathcal{B}_{f^*}\cap\mathcal{F},$ for all $\epsilon\in [0,\epsilon_f]$.
In addition,  since $J_0\in C^1$ in $f^*$  and $\mathcal{F}$ is a convex subset of $L^{5/2}(L^{5/2+}(\Omega_c))$ we get
\begin{eqnarray}\label{cop1}
J'_0(f^*)[f-f^*]
=\lim_{\epsilon\rightarrow 0}\frac{J_0(f^\epsilon)-J_0(f^*)}{\epsilon}\geq 0\quad \forall f\in \mathcal{F}.
\end{eqnarray}
Finally, from (\ref{cop2}) along with (\ref{cop1}) implies the variational inequality (\ref{L-3}).

\bigskip

\appendix

\section{Existence for linear diffusion-convection-reaction systems}\label{AppendixA}

Extending \cite[Lemma 3.1]{Andrei_AMO} by considering additional divergence terms, we can state the following result:

 \begin{lemma}\label{lem-B}
 Let 
 $2\le q<\infty$ and 
 $\Omega\subset\mathbb{R}^3$ be a bounded domain such that $\partial\Omega\in C^2$. If 
 $a\in L^{5/2}(Q)$, ${\bf c}\in (L^5(Q))^3$, 
 $g^0\in L^{5q/(2q+3)}(Q)$, 
 ${\bf g}^1\in (L^{5q/(q+3)}(Q))^3$ 
 and 
 $w_0\in L^q(\Omega)$,  then the linear diffusion-convection-reaction problem 
 \begin{equation}\label{lem-B-1}
 \left\{
 \begin{array}{rcl}
 \partial_tw-\Delta w+\nabla\cdot(w\, {\bf c})+aw&=&g^0+\nabla\cdot{\bf g}^1\ \mbox{ in }\ Q,\\
 w(0)&=&w_0\ \mbox{ in }\ \Omega,\\
 (-\nabla w+w\, {\bf c}-{\bf g}^1)\cdot{\bf n}&=&0\ \mbox{ on }\ (0,T)\times\partial\Omega,
 \end{array}
 \right.
 \end{equation}
 has a unique weak solution 
 $w$ such that $|w|^{q/2}\in W_2$. Moreover, 
 $w\in L^\infty(L^q)\cap L^{5q/3}(Q)$ 
  and $\nabla w\in L^2(Q)$. 
  \end{lemma}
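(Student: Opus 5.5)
The plan is an a priori $L^q$ energy estimate, obtained by testing with $|w|^{q-2}w$, followed by a Galerkin (or regularization) construction and a uniqueness argument. The coefficients $a\in L^{5/2}(Q)$ and ${\bf c}\in L^5(Q)$ are exactly critical for the parabolic embedding $W_2\subset L^{10/3}(Q)$. The estimates will therefore close only after splitting the coefficients into a small part plus a bounded part, and that splitting is the main obstacle.

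First, I regularize. I take $a_n,{\bf c}_n,g^0_n,{\bf g}^1_n,w_{0,n}$ smooth, converging in the stated spaces. For these data, classical linear parabolic theory gives a smooth solution $w_n$. For the a priori estimate, I test the equation with $|w|^{q-2}w$ and set $\phi=|w|^{q/2}$. This gives
$$\frac1q\frac{d}{dt}\|\phi\|^2+\frac{4(q-1)}{q^2}\|\nabla\phi\|^2=\frac{2(q-1)}{q}\int_\Omega \phi\,{\bf c}\cdot\nabla\phi-\int_\Omega a\phi^2+\int_\Omega g^0|w|^{q-2}w-\frac{2(q-1)}{q}\int_\Omega {\bf g}^1\cdot |w|^{q/2-1}\nabla\phi .$$

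For the coefficient terms, I write ${\bf c}={\bf c}_1+{\bf c}_2$ with $\|{\bf c}_1\|_{L^5(Q)}\le\delta$ and ${\bf c}_2\in L^\infty(Q)$, and similarly $a=a_1+a_2$ with $\|a_1\|_{L^{5/2}(Q)}\le \delta$. The bounded parts give $C_\delta\|\phi\|^2$ plus a small multiple of $\|\nabla\phi\|^2$. The small parts are bounded on a short time interval $I$ by Hölder's inequality in space-time: $\|\phi{\bf c}_1\cdot\nabla\phi\|_{L^1(I\times\Omega)}\le\delta\|\phi\|_{L^{10/3}}\|\nabla\phi\|_{L^2}\le C\delta\|\phi\|^2_{W_2(I)}$, and likewise $\int a_1\phi^2\le\delta\|\phi\|^2_{L^{10/3}}$. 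The smallness of $\delta$ is uniform in time, so I can integrate on consecutive intervals and absorb these terms, keeping the number of intervals finite. I would rather do this globally than with Gronwall, since $\|{\bf c}(t)\|_{L^5}^5$ is integrable but is not multiplied by a quantity that Gronwall can handle pointwise.

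For the source terms, I use $|w|^{q-1}=\phi^{2(q-1)/q}\in L^{5q/(3(q-1))}(Q)$, whose dual exponent is $5q/(2q+3)$. This gives $\int g^0|w|^{q-2}w\le\|g^0\|\,\|\phi\|^{2(q-1)/q}_{L^{10/3}}$. Similarly, $|w|^{q/2-1}\in L^{10q/(3(q-2))}$, so Hölder with $\nabla\phi\in L^2$ and ${\bf g}^1\in L^{5q/(q+3)}$ matches exactly, because $\tfrac{q+3}{5q}+\tfrac{3(q-2)}{10q}+\tfrac12=1$. Young's inequality then absorbs these terms, since the power of $\|\phi\|_{W_2}$ they produce is below $2$. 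The result is a uniform bound on $\|\phi\|_{W_2}$, hence $w\in L^\infty(L^q)\cap L^{5q/3}(Q)$. The case $q=2$ gives $\nabla w\in L^2(Q)$ directly. For $q>2$, on bounded $\Omega$ I note $5q/(2q+3)\ge 10/7$ and $5q/(q+3)\ge 2$, so the data also satisfy the $q=2$ hypotheses, and the same bound gives $\nabla w\in L^2(Q)$.

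I then pass to the limit. I get weak compactness from these bounds, and $\partial_t w_n$ bounded in a dual space from the equation. The terms $w_n{\bf c}_n$ and $a_nw_n$ converge weakly because $w_n\to w$ strongly in $L^2(Q)$ by Aubin–Lions, while $w_n$ stays bounded in $L^{10/3}$. For uniqueness, I take the difference of two solutions, which has zero data, and repeat the $q=2$ estimate. This is legitimate because $w\in W_2$ gives $w\,{\bf c}\in L^{2}$ and $aw\in L^{10/7}$, so $w$ itself is an admissible test function. The result is $w\equiv0$.
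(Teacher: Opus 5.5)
Your proposal is correct, but the central a priori estimate takes a different and heavier route than the paper's.

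\textbf{What the paper does.} The paper also regularizes the data. It builds the regularized solution by a Leray--Schauder argument on $L^{5/2}(W^{1,5/2})$ with values in $X_{5/2}$; your appeal to classical linear theory or Galerkin is simpler and entirely adequate for a linear problem. It then tests with $|w_\varepsilon|^{q-2}w_\varepsilon$, as you do, but it bounds every term pointwise in time. It uses spatial H\"older, then $\|\phi\|_{L^{10/3}}\le C\|\phi\|^{2/5}\|\phi\|^{3/5}_{H^1}$, then Young. This gives $\int_\Omega \phi\,|{\bf c}|\,|\nabla\phi|\le \delta\|\phi\|^2_{H^1}+C\|{\bf c}(t)\|^5_{L^5}\|\phi\|^2$ and $\int_\Omega |a|\phi^2\le \delta\|\phi\|^2_{H^1}+C\|a(t)\|^{5/2}_{L^{5/2}}\|\phi\|^2$. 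The source terms contribute $\tilde C\big(\|g^0(t)\|^{5q/(2q+3)}_{L^{5q/(2q+3)}}+\|{\bf g}^1(t)\|^{5q/(q+3)}_{L^{5q/(q+3)}}\big)(\|\phi\|^2+1)$. The result is $\frac{d}{dt}\|\phi\|^2+C\|\phi\|^2_{H^1}\le k(t)(\|\phi\|^2+1)$ with $k\in L^1(0,T)$, and Gronwall closes it.

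\textbf{Your reason for avoiding Gronwall is mistaken.} The term $\|{\bf c}(t)\|^5_{L^5}\|\phi(t)\|^2$ is exactly of the form $k(t)y(t)$ with $k$ integrable, which Gronwall handles directly.

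\textbf{Your route still works, at a cost.} You split $a$ and ${\bf c}$ into a small critical part plus an $L^\infty$ part, then use space-time H\"older and absorption; your exponent bookkeeping for $g^0$ and ${\bf g}^1$ is right. This does close. However, the splitting must be chosen uniformly along the regularizing sequence. The resulting constant then depends on the truncation level, that is, on how $a$ and ${\bf c}$ are distributed, not only on their norms. The paper's norm-only dependence is what it relies on later: in the bounds $\widetilde K_2$ and $\widehat K$, and in the boundedness of minimizing sequences in Theorem~\ref{control}. If you want to avoid Gronwall and keep this dependence, split time rather than values. Choose intervals $I$ on which $\|{\bf c}\|_{L^5(I\times\Omega)}+\|a\|_{L^{5/2}(I\times\Omega)}\le\delta$; the number of such intervals is controlled by the norms.

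The passage to the limit and the uniqueness argument coincide with the paper's. In particular, testing the difference of two solutions by itself is admissible because $w{\bf c}\in L^2(Q)$ and $aw\in L^{10/7}(Q)$.
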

 
 \begin{proof} 
 Let $a_{\varepsilon},{\bf c}_\varepsilon, g^0_\varepsilon, {\bf g}_\varepsilon^1$ and $w^0_\varepsilon$ adequate regularizations such that 
 $a_{\varepsilon}\to a$ in $L^{5/2}(Q)$, $ {\bf c}_\varepsilon\to {\bf c}$ in $L^5(Q)^3$, $g^0_\varepsilon\to g^0$ in $L^{5q/(2q+3)}(Q)$, 
 ${\bf g}_\varepsilon^1\to {\bf g}^1$ in $L^{5q/(q+3)}(Q)^3$ with $\nabla\cdot{\bf g}^1_\varepsilon\in L^{5/2}(Q)$ 
  and $w_\varepsilon^0\to w_0$ in $L^{q}(\Omega)$. Then, we first solve the following regularized problem: Find $w_\varepsilon$ such that
 \begin{equation}\label{lem-B-2}
 \left\{
 \begin{array}{rcl}
 \partial_tw_\varepsilon-\Delta w_\varepsilon+\nabla\cdot(w_\varepsilon{\bf c}_\varepsilon)+a_{\varepsilon} w_\varepsilon&=&g^0_\varepsilon+\nabla\cdot{\bf g}^1_\varepsilon\ \mbox{ in $Q$},\\
 w_\varepsilon(0)&=&w_\varepsilon^0\ \mbox{ in $\Omega$},\\
 (-\nabla w_\varepsilon+w_\varepsilon{\bf c}_\varepsilon-{\bf g}_\varepsilon^1)\cdot{\bf n}&=&0\ \mbox{ on $(0,T)\times\partial\Omega$}.
 \end{array}
 \right.
 \end{equation}
 To solve \eqref{lem-B-2} we use the Leray-Schauder fixed-point theorem, for which we define the operator 
 $$
 \Gamma_\varepsilon: W_{5/2}:= L^{5/2}(W^{1,5/2})\to X_{5/2}\hookrightarrow W_{5/2}
 $$
 by $\Gamma_\varepsilon(\overline{w}_\varepsilon)=w_\varepsilon$, where $w_\varepsilon$ is the solution of problem 
 \begin{equation}\label{lem-B-3}
 \left\{
 \begin{array}{rcl}
 \partial_tw_\varepsilon-\Delta w_\varepsilon&=&-\nabla\cdot(\overline{w}_\varepsilon{\bf c}_\varepsilon)-a_{\varepsilon} \overline{w}_\varepsilon +g^0_\varepsilon+\nabla\cdot{\bf g}^1_\varepsilon\ \mbox{ in }\ Q,\\
 w_\varepsilon(0)&=&w_\varepsilon^0\ \mbox{ in }\ \Omega,\\
 (-\nabla w_\varepsilon+\overline w_\varepsilon{\bf c}_\varepsilon-{\bf g}_\varepsilon^1)\cdot{\bf n}&=&0\ \mbox{ on }\ (0,T)\times\partial\Omega.
 \end{array}
 \right.
 \end{equation}
 From the regularity of $\overline{w}_\varepsilon,{\bf c}_\varepsilon,a_\varepsilon,g^0_\varepsilon$ and ${\bf g}^1_\varepsilon$ we have in particular that $-\nabla\cdot(\overline{w}_\varepsilon{\bf c}_\varepsilon)-a_\varepsilon \overline{w}_\varepsilon +g^0_\varepsilon+\nabla\cdot{\bf g}^1_\varepsilon$ belongs to $L^{5/2}(Q)$. Then, the parabolic regularity gives the existence of a unique solution $w_\varepsilon\in X_{5/2}$ of \eqref{lem-B-3}. Thus, the mapping $\Gamma_\varepsilon$ is well-defined. The continuity of $\Gamma_\varepsilon$ from $W_{5/2}$ into itself is clear. Moreover, from Simon-Aubin-Lions compactness,  the embedding $X_{5/2}\hookrightarrow W_{5/2}$ is compact; thus the operator $\Gamma_\varepsilon$ is compact from $W_{5/2}$ into $X_{5/2}$.  Therefore, to finish the existence of solutions of system \eqref{lem-B-2} as a fixed-point of $\Gamma_\varepsilon,$ it suffices to prove that the set of possible fixed-points of $\alpha\Gamma_\varepsilon$, for any $\alpha\in[0,1]$, is bounded in $W_{5/2}$. Indeed, let $\alpha\in(0,1]$ (the case $\alpha=0$ is clear). Then, if $w_\varepsilon\in X_{5/2}$ is fixed point of $\alpha\Gamma_\varepsilon$ satisfies the following problem:
 
  \begin{equation}\label{lem-B-4}
 \left\{
 \begin{array}{rcl}
 \partial_t w_\varepsilon-\Delta w_\varepsilon&=&-\alpha\nabla\cdot({w}_\varepsilon{\bf c}_\varepsilon)-\alpha a_{\varepsilon}{w}_\varepsilon +\alpha g^0_\varepsilon+\alpha\nabla\cdot{\bf g}^1_\varepsilon\ \mbox{ in }\ Q,\\
 w_\varepsilon(0)&=&w_\varepsilon^0\ \mbox{ in }\ \Omega,\\
(-\nabla w_\varepsilon+\alpha w_\varepsilon{\bf c}_\varepsilon- \alpha {\bf g}_\varepsilon^1)\cdot{\bf n}&=&0\ \mbox{ on }\ (0,T)\times\partial\Omega.
 \end{array}
 \right.
 \end{equation}
 Since the right hand side of \eqref{lem-B-4} is bounded in $L^{5/2}(Q)$ independent of $\alpha$ (although depending on  $\varepsilon$), there exists a unique $w_\varepsilon$ solution of problem  \eqref{lem-B-2}.
 
 Now, we are going to make estimates of $w_\varepsilon$ independent of $\varepsilon$. 
 Since $w_\varepsilon\in X_{5/2}$, then $w_\varepsilon \in L^\infty(L^{\infty-})$. 
 Thus, taking $|w_\varepsilon|^{q-2}w_\varepsilon$ (for any $q\ge 2$) as test function in \eqref{lem-B-4} (and using the equality $\nabla ( |w_\varepsilon|^{q-2}w_\varepsilon) = {\rm sgn} (w_\varepsilon)\nabla (|w_\varepsilon|^{q-1})$), 
 we obtain
 \begin{eqnarray}
 &&\frac1q\frac{d}{dt}\||w_\varepsilon|^{q/2}\|^2+\frac{4(q-1)}{q^2}\|\nabla(|w_\varepsilon|^{q/2})\|^2
 = -\alpha\int_\Omega a_{\varepsilon} |w_\varepsilon|^q
 +\alpha\int_\Omega |w_\varepsilon| {\bf c}_\varepsilon \cdot \nabla (|w_\varepsilon|^{q-1})\nonumber\\
 && \quad+ \alpha\int_\Omega {\rm sgn} (w_\varepsilon)\, {\bf g}^1_\varepsilon\cdot\nabla(|w_\varepsilon|^{q-1})+\alpha\int_\Omega g^0_\varepsilon|w_\varepsilon|^{q-2}w_\varepsilon.\label{lem-B-5}
 \end{eqnarray}
 Now we will bound the terms on the right-hand side of \eqref{lem-B-5}. Applying the H\"older and Young inequalities and taking into account the estimate $\|v\|_{L^{10/3}}\le C\|v\|^{2/5}\|v\|^{3/5}_{H^1}$, we can obtain
 
\begin{eqnarray}
-\alpha\int_\Omega a_{\varepsilon} |w_\varepsilon|^q
\le\alpha\|a_{\varepsilon}\|_{L^{5/2}}\||w_\varepsilon|^{q/2}\|_{L^{10/3}}^2 
\le \delta\||w_\varepsilon|^{q/2}\|^2_{H^1}+C\|a_{\varepsilon}\|_{L^{5/2}}^{5/2}\||w_\varepsilon|^{q/2}\|^2,\label{lem-B-6}
\end{eqnarray}
\begin{eqnarray}
\alpha\int_\Omega |w_\varepsilon| {\bf c}_\varepsilon \cdot \nabla (|w_\varepsilon|^{q-1})
&\le&\frac2q\alpha\int_\Omega |w_\varepsilon|^{q/2} |\nabla(|w_\varepsilon|^{q/2})|\, |{\bf c}_\varepsilon|\nonumber\\
&\le&C\||w_\varepsilon|^{q/2}\|_{L^{10/3}}
\||w_\varepsilon|^{q/2}\|_{H^1}\|{\bf c}_\varepsilon\|_{L^5}\nonumber\\
&\le&\delta\||w_\varepsilon|^{q/2}\|^2_{H^1}+C\||w_\varepsilon|^{q/2}\|^2\|{\bf c}_\varepsilon\|^5_{L^5},\label{lem-B-7}
\end{eqnarray}
\begin{eqnarray}
\alpha\int_\Omega {\rm sgn} (w_\varepsilon) {\bf g}^1_\varepsilon\cdot\nabla(|w_\varepsilon|^{q-1})
&\le&\alpha\int_\Omega |{\bf g}^1_\varepsilon | (|w_\varepsilon|^{q/2})^{\frac{q-2}q} |\nabla(|w_\varepsilon|^{q/2})|
\nonumber\\
&\le&\alpha\|{\bf g}_\varepsilon^1\|_{L^{\frac{5q}{q+3}}}\||w_\varepsilon|^{q/2}\|_{L^{10/3}}^{\frac{q-2}q}\|\nabla(|w_\varepsilon|^{q/2})\|\nonumber\\
&\le&C\|{\bf g}^1_\varepsilon\|_{L^{\frac{5q}{q+3}}}\||w_\varepsilon|^{q/2}\|^{\frac{2(q-2)}{5q}}\||w_\varepsilon|^{q/2}\|_{H^1}^{\frac{8q-6}{5q}}\nonumber\\
&\le&\delta\||w_\varepsilon|^{q/2}\|^2_{H^1}+C\|{\bf g}^1_\varepsilon\|_{L^{\frac{5q}{q+3}}}^{\frac{5q}{q+3}}
\||w_\varepsilon|^{q/2}\|^{\frac{2(q-2)}{q+3}} 
\nonumber \\
&\le&
\delta\||w_\varepsilon|^{q/2}\|^2_{H^1}
+
\tilde{C}
\|{\bf g}^1_\varepsilon\|_{L^{\frac{5q}{q+3}}}^{\frac{5q}{q+3}}
\left(\||w_\varepsilon|^{q/2}\|^2 + 1 \right).\label{lem-B-8}
\end{eqnarray}
Note that in previous estimate $q\ge 2$ is used. Finally,
\begin{eqnarray}
\alpha\int_\Omega g^0_\varepsilon|w_\varepsilon|^{q-2}w_\varepsilon
&\le&\alpha \|g_\varepsilon^0\|_{L^{\frac{5q}{2q+3}}}\||w_\varepsilon|^{q/2}\|_{L^{10/3}}^{2(q-1)/q}\nonumber\\
&\le&C\|g_\varepsilon^0\|_{L^{\frac{5q}{2q+3}}}\||w_\varepsilon|^{q/2}\|^{\frac{4(q-1)}{5q}}\||w_\varepsilon|^{q/2}\|^{\frac{6(q-1)}{5q}}_{H^1}\nonumber\\
&\le&\delta\||w_\varepsilon|^{q/2}\|^2_{H^1}+C\|g^0_\varepsilon\|_{L^{\frac{5q}{2q+3}}}^{\frac{5q}{2q+3}}\||w_\varepsilon|^{q/2}\|^{\frac{4(q-1)}{2q+3}}\nonumber \\
&\le&
\delta\||w_\varepsilon|^{q/2}\|^2_{H^1}
+
\tilde{C}
\|g^0_\varepsilon\|_{L^{\frac{5q}{2q+3}}}^{\frac{5q}{2q+3}}
\left(\||w_\varepsilon|^{q/2}\|^2 + 1 \right)
.\label{lem-B-9}
\end{eqnarray}
Hence, carrying estimates \eqref{lem-B-6}-\eqref{lem-B-9} into \eqref{lem-B-5} and adding $\||w_\varepsilon|^{q/2}\|^2$ to both sides of the resulting inequality, we arrive at 
\begin{eqnarray}
\frac{d}{dt}\||w_\varepsilon|^{q/2}\|^2+C\||w_\varepsilon|^{q/2}\|^2_{H^1}
&\le&C\left(\|a_{\varepsilon}\|^{5/2}_{L^{5/2}}+\|{\bf c}_\varepsilon\|_{L^5}^5
+
\|{\bf g}_\varepsilon^1\|_{L^{\frac{5q}{q+3}}}^{\frac{5q}{q+3}}+\|g^0_\varepsilon\|_{L^{\frac{5q}{2q+3}}}^{\frac{5q}{2q+3}}
\right)\||w_\varepsilon|^{q/2}\|^2
\nonumber\\
&&+C\left(\|{\bf g}_\varepsilon^1\|_{L^{\frac{5q}{q+3}}}^{\frac{5q}{q+3}}+\|g^0_\varepsilon\|_{L^{\frac{5q}{2q+3}}}^{\frac{5q}{2q+3}}\right).\label{lem-B-10}
\end{eqnarray}
Therefore, from \eqref{lem-B-10} and Gronwall lemma we deduce that 
$\{|w_\varepsilon|^{q/2}\}_{\varepsilon>0}$ is bounded in $W_2$ (for any $q\ge 2$), independently of $\varepsilon>0$. 
Passing to the limit, as $\varepsilon$ goes to 0, we obtain that $w_\varepsilon$ converges to a solution $w$ of problem \eqref{lem-B-1} such that $|w|^{q/2}\in W_2$. Consequently $w\in L^\infty(L^q)\cap L^{5q/3}(Q)$. Since $q\ge 2$, the previous estimate holds also for $q=2$, hence in particular $\nabla w\in L^2(Q)$.
%

Finally, the uniqueness of solutions can be obtained by following a classical comparison argument, because looking at the regularity of each term of equation \eqref{lem-B-1}$_1$ one can take the weak solution as test function.
 \end{proof}
 
  \begin{corollary}\label{lem-A}
 Let $1< q<\infty$ and 
 $\Omega\subset\mathbb{R}^3$ be a bounded domain with $\partial\Omega\in C^2$. 
 Assume that $w_0\in L^{q}(\Omega)$,  $a\in L^{5/2}(Q)$ and $g\in L^{5q/(2q+3)}(Q)$. Then the
 linear diffusion-reaction problem 
 \begin{equation}\label{lem-A-1}
 \left\{
 \begin{array}{rcl}
 \partial_tw-\Delta w+a\, w&=&g\ \mbox{ in }\ Q,\\
 w(0)&=&w_0\ \mbox{ in }\ \Omega,\\
 \partial_{\bf n}w&=&0\ \mbox{ on }\ (0,T)\times\partial\Omega,
 \end{array}
 \right.
 \end{equation}
 has a unique weak solution 
 $w$ such that $|w|^{q/2}\in W_2$. Moreover, 
 $w\in L^\infty(L^q)\cap L^{5q/3}(Q)$ with $\nabla w\in L^{5q/(q+3)}(Q)$ for $q\in(1,2]$, and $\nabla w\in L^2(Q)$ for $q\ge2$.  In particular, if $w_0\in L^{\infty-}(\Omega)$ and $g\in L^{5/2-}(Q)$, then  
 $w\in L^\infty(L^{\infty-})$ with $\nabla w\in L^2(Q)$. 
  Moreover, if $a\in L^{5/2}(L^{5/2+})$, $g\in L^{5/2}(Q)$ and $w_0\in W^{6/5,5/2}(\Omega)$, 
  then $w\in X_{5/2}$.
 
 \end{corollary}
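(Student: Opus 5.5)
The plan is to split the statement into three parts: the general case $q\ge 2$, the range $q\in(1,2)$, and the maximal-regularity claim $w\in X_{5/2}$.

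\textbf{Case $q\ge2$.} Problem \eqref{lem-A-1} is the special case of Lemma~\ref{lem-B} with ${\bf c}={\bf 0}$, $g^0=g$, ${\bf g}^1={\bf 0}$; the flux condition then reduces to $\partial_{\bf n}w=0$. Lemma~\ref{lem-B} directly gives existence and uniqueness, $|w|^{q/2}\in W_2$, $w\in L^\infty(L^q)\cap L^{5q/3}(Q)$ and $\nabla w\in L^2(Q)$.

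\textbf{Case $q\in(1,2)$.} Lemma~\ref{lem-B} is stated only for $q\ge2$, because estimate \eqref{lem-B-8} needs it. Here ${\bf g}^1=0$, so that term is absent, and I would redo the estimates of Lemma~\ref{lem-B} on regularized problems, without the convective term. To avoid the singularity of $|w|^{q-2}w$ at $0$, test with $(w_\varepsilon^2+\kappa)^{(q-2)/2}w_\varepsilon$ and let $\kappa\to0$.
\begin{itemize}
\item The reaction term is handled exactly as in \eqref{lem-B-6}.
\item The source term is handled as in \eqref{lem-B-9}. The H\"older exponent is still admissible: $\frac{2q+3}{5q}+\frac{3(q-1)}{5q}=1$.
\item Gronwall then gives $|w|^{q/2}\in W_2$, hence $w\in L^\infty(L^q)\cap L^{5q/3}(Q)$.
\end{itemize}
For the gradient, write $\nabla w=\frac{2}{q}|w|^{1-q/2}\,{\rm sgn}(w)\,\nabla(|w|^{q/2})$. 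Since $|w|^{1-q/2}\in L^{5q/(3(1-q/2))}$ and $\nabla|w|^{q/2}\in L^2$, H\"older gives $\nabla w\in L^{r}(Q)$ with
$$\frac1r=\frac12+\frac{3(2-q)}{10q}=\frac{q+3}{5q}.$$
This is the claimed $L^{5q/(q+3)}$, and it matches $L^2$ at $q=2$. Uniqueness follows by duality or by a classical comparison, as at the end of Lemma~\ref{lem-B}.

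\textbf{The $L^\infty(L^{\infty-})$ assertion.} If $w_0\in L^{\infty-}$ and $g\in L^{5/2-}(Q)$, then for every finite $q\ge2$ we have $g\in L^{5q/(2q+3)}$, since $5q/(2q+3)<5/2$. Applying the $q\ge2$ case for each such $q$ gives $w\in L^\infty(L^{\infty-})$ with $\nabla w\in L^2(Q)$.

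\textbf{Maximal regularity $w\in X_{5/2}$.} Now $a\in L^{5/2}(L^{5/2+})$, $g\in L^{5/2}(Q)$ and $w_0\in W^{6/5,5/2}(\Omega)$. I would bootstrap through the heat equation $\partial_tw-\Delta w=g-aw$ with Neumann data.
\begin{enumerate}
\item From the previous step (note $g\in L^{5/2}\subset L^{5/2-}$, and $w_0\in W^{6/5,5/2}\subset L^{\infty-}$ in 3D), $w\in L^\infty(L^{\infty-})$.
\item The exponent $5/2+$ on $a$ is exactly what is needed: choose $s<\infty$ large with $\frac{1}{5/2+\delta}+\frac1s=\frac25$. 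Then
$$\|aw\|_{L^{5/2}(Q)}\le \|a\|_{L^{5/2}(L^{5/2+\delta})}\,\|w\|_{L^\infty(L^s)}.$$
\item The right-hand side is therefore in $L^{5/2}(Q)$, and $W^{6/5,5/2}$ is the trace space of $X_{5/2}$. Maximal $L^{5/2}$-parabolic regularity for the Neumann heat problem then yields $w\in X_{5/2}$.
\end{enumerate}
The main obstacle is the case $q<2$. There the test function is not admissible directly, and one must check that the approximation and regularization pass to the limit with bounds uniform in both parameters. If that proves delicate, the fallback is to reach the bounds by duality from the $q'$ case.
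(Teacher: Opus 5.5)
Your proposal takes essentially the paper's route. You use the energy estimates of Lemma~\ref{lem-B} with ${\bf c}={\bf 0}$, ${\bf g}^1={\bf 0}$; your $\kappa$-regularized test function for $q<2$ and the H\"older computation giving $\nabla w\in L^{5q/(q+3)}(Q)$ just make explicit what the paper leaves implicit. You then apply maximal $L^{5/2}$-parabolic regularity once $g-aw\in L^{5/2}(Q)$, which you correctly verify from $a\in L^{5/2}(L^{5/2+})$ and $w\in L^\infty(L^{\infty-})$.

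The one point to tighten is uniqueness for $1<q<2$. There the classical comparison of Lemma~\ref{lem-B} is not available: the solution has $\nabla w\notin L^2(Q)$ in general, so it cannot serve as a test function. You must therefore commit to duality, as the paper does. Solve the adjoint problem $-\partial_t\varphi-\Delta\varphi+a\varphi=g_\varphi$, $\varphi(T)=0$, $\partial_{\bf n}\varphi=0$ in $X_{5q/(5q-3)}$ for every $g_\varphi\in L^{5q/(5q-3)}(Q)$. Then test the difference of two solutions in $L^{5q/3}(Q)$ against $\varphi$.
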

 \begin{proof} 
 This result follows the same argument of Lemma \ref{lem-B}, excepting uniqueness when $1<q<2$ and the regularity  $w\in X_{5/2}$. 
 In fact, we will prove uniqueness of weak solutions $w\in L^{5q/3}(Q)$ of system \eqref{lem-A-1} arguing by duality. Indeed, since $a\in L^{5/2}(Q)$ 
  then the adjoint problem
$$
-\partial_t\varphi-\Delta\varphi
+a\varphi=g_\varphi,\ \ \varphi(T,\cdot)=0,
$$
endowed with the corresponding isolated boundary conditions, has a unique strong solution $\varphi\in X_{5q/(5q-3)}$ for any data $g_\varphi\in L^{5q/(5q-3)}(Q)$ (see \cite[Theorem A.1]{Andre_SICOM}). 
Then, we consider $w_1,w_2\in L^{5q/3}(Q)$ two possible weak solutions of \eqref{lem-A-1}, thus testing by any function $\varphi\in 
X_{5q/(5q-3)}$ such that $\partial_{\bf n}\varphi=0$ on $(0,T)\times \partial\Omega$, we have
$$
0=\int_0^T\int_\Omega(w_1-w_2)\left(-\partial_t\varphi-\Delta\varphi
+a\varphi\right)
=\int_0^T\int_\Omega(w_1-w_2)g_\varphi.
$$
Then, by density $w_1=w_2$ in $L^{5q/3}(Q)$, which implies the uniqueness. 

On the other hand, since $w(0)=w_0\in W^{6/5,5/2}(\Omega)$ and $g-aw\in L^{5/2}(Q)$, from parabolic regularity we deduce that $w\in X_{5/2}$.
\end{proof}

 \begin{lemma}\label{lem-C}
 Let $\Omega\subset\mathbb{R}^3$ be a bounded domain with boundary $\partial\Omega\in C^2$. Suppose that $a\in L^{5/2}(L^{5/2+})$, $g\in L^{5/2}(Q)$ and $w_0\in W^{1,3}(\Omega)$, then the linear 
 diffusion-reaction problem
 \begin{equation}\label{lem-C-1}
 \left\{
 \begin{array}{rcl}
 \partial_tw-\Delta w+a\, w&=&g\ \mbox{ in }\ Q,\\
 w(0)&=&w_0\ \mbox{ in }\ \Omega,\\
 \partial_{\bf n}w&=&0\ \mbox{ on }\ (0,T)\times\partial\Omega,
 \end{array}
 \right.
 \end{equation}
 has a unique weak solution $w$ such that $|w|^{3/2}$ and 
 $|\nabla w|^{3/2}$ belong to $W_2$. 
 In particular  $w\in L^\infty(W^{1,3})$.
 \end{lemma}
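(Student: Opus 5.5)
The plan is to follow the scheme of Lemma~\ref{lem-B}: regularize the data, obtain a priori bounds for $|w|^{3/2}$ and $|\nabla w|^{3/2}$ in $W_2$ that do not depend on the regularization, and then pass to the limit. Take $a_\varepsilon\to a$ in $L^{5/2}(L^{5/2+})$, $g_\varepsilon\to g$ in $L^{5/2}(Q)$ and $w_{0,\varepsilon}\to w_0$ in $W^{1,3}(\Omega)$, all smooth and with $w_{0,\varepsilon}$ compatible with the Neumann condition. For fixed $\varepsilon$, existence of a regular solution $w_\varepsilon$ comes from Corollary~\ref{lem-A}; the approximating data may be taken regular enough that the computations below are justified. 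Uniqueness in the limit class is immediate from Corollary~\ref{lem-A}, since such solutions are already weak solutions with $w\in L^{5q/3}(Q)$ for $q=3$.

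\textbf{Step 1 ($L^3$ level).} Test with $|w_\varepsilon|w_\varepsilon$, exactly as in \eqref{lem-B-5}--\eqref{lem-B-9} with $q=3$, ${\bf c}=0$ and ${\bf g}^1=0$. The reaction term is bounded by $\|a\|_{L^{5/2}}\||w|^{3/2}\|_{L^{10/3}}^2$. Since $5q/(2q+3)=15/9<5/2$, the source term is covered. Gronwall then gives $|w_\varepsilon|^{3/2}$ bounded in $W_2$, hence $w_\varepsilon\in L^\infty(L^3)\cap L^5(Q)$.

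\textbf{Step 2 (gradient level, the main step).} Differentiate the equation in space, or equivalently test with $-\nabla\cdot(|\nabla w|\nabla w)$. This gives
\[
\frac13\frac{d}{dt}\|\nabla w\|_{L^3}^3+\int_\Omega |\nabla w|\,|D^2w|^2+\frac{4}{9}\|\nabla|\nabla w|^{3/2}\|^2
\le -\int_\Omega \nabla(g-aw)\cdot|\nabla w|\nabla w+\text{boundary terms}.
\]
The boundary term $\int_{\partial\Omega}|\nabla w|\,\partial_{\bf n}|\nabla w|^2$ is controlled in the usual way: $\partial\Omega\in C^2$ gives $\partial_{\bf n}|\nabla w|^2\le C|\nabla w|^2$, and a trace inequality absorbs it into $\delta\||\nabla w|^{3/2}\|^2_{H^1}+C\||\nabla w|^{3/2}\|^2$.

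The right-hand side should be integrated by parts back onto $\nabla\cdot(|\nabla w|\nabla w)$, whose size is $\lesssim|\nabla w|\,|D^2w|\approx|\nabla w|^{1/2}\,|\nabla|\nabla w|^{3/2}|$. The $g$-term is then bounded by
\[
\|g\|_{L^{5/2}}\,\||\nabla w|^{3/2}\|_{L^{10/3}}^{1/3}\,\|\nabla|\nabla w|^{3/2}\|,
\]
and interpolation plus Young's inequality leaves a term $C\|g\|_{L^{5/2}}^{5/2}\bigl(1+\||\nabla w|^{3/2}\|^2\bigr)$. The H\"older exponents work out: $2/5+1/10+1/2=1$.

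\textbf{Main obstacle.} The reaction term $a w\,\nabla\cdot(|\nabla w|\nabla w)$ is the hard part, because both $a$ and $w$ are rough. I would bound it by
\[
\|a\|_{L^{5/2+}}\,\|w\|_{L^{r}}\,\||\nabla w|^{3/2}\|_{L^{10/3}}^{1/3}\,\|\nabla|\nabla w|^{3/2}\|,
\]
with $r$ slightly below $10$, chosen so that the spatial H\"older exponents sum to $1$. To control $\|w\|_{L^r}$, use the Sobolev embedding $W^{1,3}\subset L^{\infty-}$, i.e.\ $\|w\|_{L^r}\le C\|w\|_{W^{1,3}}=C(\|w\|_{L^3}+\||\nabla w|^{3/2}\|^{2/3})$. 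This is exactly why $L^{5/2+}$ in space is needed and why $w_0\in W^{1,3}$ is assumed.

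With that bound, Young's inequality gives
\[
\delta\|\nabla|\nabla w|^{3/2}\|^2+C\|a\|_{L^{5/2+}}^{5/2}\Bigl(1+\||\nabla w|^{3/2}\|^2\Bigr),
\]
using Step 1 for $\|w\|_{L^3}$. The time power must also come out as $\|a\|^{5/2}$ so that it is integrable in time; checking this exponent bookkeeping is the delicate point, and I expect it to close only by exploiting the $\delta$-room in $5/2+$.

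Gronwall then yields $|\nabla w_\varepsilon|^{3/2}$ bounded in $W_2$ uniformly in $\varepsilon$. Passing to the limit by weak compactness and lower semicontinuity gives the claim, and in particular $w\in L^\infty(W^{1,3})$.
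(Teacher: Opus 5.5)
Your proposal is correct and is essentially the paper's proof. It uses the same test function $|w|w-\nabla\cdot(|\nabla w|\nabla w)$, the same curvature bound $\partial_{\bf n}|\nabla w|^2\le\mathcal{K}|\nabla w|^2$ plus a trace inequality for the boundary term, and the same key inequality $\|aw\|_{L^{5/2}}\le\|a\|_{L^{5/2+}}\|w\|_{L^{\infty-}}\le C\|a\|_{L^{5/2+}}\|w\|_{W^{1,3}}$ for the reaction term.

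Three small fixes are needed:
\begin{enumerate}
\item The H\"older split forces $r$ to be large (finite), not slightly below $10$.
\item The $g$- and $aw$-terms must be absorbed into $\int_\Omega|\nabla w||D^2w|^2$. The relation $|\nabla w||D^2w|\approx|\nabla w|^{1/2}|\nabla|\nabla w|^{3/2}|$ only holds as $\gtrsim$.
\item Once this is done, Young's inequality closes exactly with $C\|a\|_{L^{5/2+}}^{5/2}\bigl(\||\nabla w|^{3/2}\|^2+\||w|^{3/2}\|^2\bigr)$. No extra room in the time exponent is needed.
\end{enumerate}
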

 
 \begin{proof} 
 We consider a regularization $w_\varepsilon^0\in W^{6/5,5/2}(\Omega)$ such that $w_\varepsilon^0\to w_0$ in $W^{1,3}(\Omega)$. 
 Then,  following Corollary \ref{lem-A}  
 we can consider the solution $w_\varepsilon\in X_{5/2}$ of the problem 
  \begin{equation}\label{lem-C-2}
 \partial_tw_\varepsilon-\Delta w_\varepsilon+aw_\varepsilon=g,\quad w_\varepsilon(0,\cdot)=w^0_\varepsilon,\quad \partial_{\bf n}w_\varepsilon|_{\partial\Omega}=0.
 \end{equation}
 Since $w_\varepsilon \in X_{5/2}$, then $w_\varepsilon\in L^\infty(L^{\infty-})$ and $\nabla w_\varepsilon\in L^5(Q)$. Thus, we can take $|w_\varepsilon|w_\varepsilon
 -\nabla\cdot (|\nabla w_\varepsilon|\nabla w_\varepsilon)$ as test function in \eqref{lem-C-2}. In order to simplify the argument to obtain a priori estimates, we will divide it into three steps:
 \vspace{0.1cm}
 
 \noindent\underline{\it Step 1:} Estimates for $|w_\varepsilon|^{3/2}$.

By testing \eqref{lem-C-2} by $|w_\varepsilon|w_\varepsilon$ and arguing as in  \eqref{lem-B-6} and \eqref{lem-B-9}  for $q=3$ we can obtain
\begin{equation}\label{lem-C-3}
\frac{d}{dt}\||w_\varepsilon|^{3/2}\|^2+C\||w_\varepsilon|^{3/2}\|^2_{H^1}
\le C \|a\|_{L^{5/2}}^{5/2} \||w_\varepsilon|^{3/2}\|^2
+C\|g\|_{L^{5/3}}^{5/3} 
(\||w_\varepsilon|^{3/2}\|^2)^{4/9}.
\end{equation}
 
 \noindent\underline{\it Step 2:} Estimates for $|\nabla w_\varepsilon|^{3/2}$.

 By testing \eqref{lem-C-2} by $-\nabla\cdot(|\nabla w_\varepsilon|\nabla w_\varepsilon)\in L^{5/3 }(Q)$ we can obtain the following estimate (see the proof in Appendix B below):
 \begin{eqnarray}
&&\!\frac13 \frac{d}{dt} \||\nabla w_\varepsilon|^{3/2}\|^2+\frac49\|\nabla(|\nabla w_\varepsilon|^{3/2})\|^2
 \!+\int_\Omega|\nabla w_\varepsilon||D^2w_\varepsilon|^2\nonumber\\
 && \le \!\delta\||\nabla w_\varepsilon|^{3/2}\|^2_{H^1}\!+\!C\||\nabla w_\varepsilon|^{3/2}\|^2\!+\!\!\int_\Omega|a w_\varepsilon\nabla\!\cdot\!(|\nabla w_\varepsilon|\nabla w_\varepsilon)|\!+\!\!\int_\Omega|g\nabla\!\cdot\!(|\nabla w_\varepsilon|\nabla w_\varepsilon)|.
\label{lem-C-4}
 \end{eqnarray}
 Moreover, taking into account that
 $\nabla\cdot(|\nabla w_\varepsilon|\nabla w_\varepsilon)=|\nabla w_\varepsilon|\Delta w_\varepsilon+\frac23|\nabla w_\varepsilon|^{-1/2}\partial_j(|\nabla w_\varepsilon|^{3/2})\partial_jw_\varepsilon,$ from the H\"older  and 
 Young inequalities and considering the inequality 
 $\|u\|_{L^{10/3}}\le C\|u\|^{2/5}\|u\|^{3/5}_{H^1}$,  we can estimate as 
  \begin{eqnarray}
 &&\int_\Omega|g\nabla\cdot(|\nabla w_\varepsilon|\nabla w_\varepsilon)|
 \nonumber\\
 &&\le\int_\Omega |g| \, |\nabla w_\varepsilon|\, |\Delta w_\varepsilon|
+\frac23\int_\Omega |g| \, |\nabla w_\varepsilon|^{-1/2} |\nabla(|\nabla w_\varepsilon|^{3/2})\cdot\nabla w_\varepsilon|
\nonumber\\
&& \le \|g\|_{L^{5/2}}\left(\int_\Omega|\nabla w_\varepsilon||\Delta w_\varepsilon|^2\right)^{1/2}
 \||\nabla w_\varepsilon|^{3/2}\|^{1/3}_{L^{10/3}}
 +\frac23 \|g\|_{L^{5/2}} \||\nabla w_\varepsilon|^{3/2}\|^{1/3}_{L^{10/3}}\|\nabla(|\nabla w_\varepsilon|^{3/2})\|\nonumber\\
 &&\le\delta\left(\||\nabla w_\varepsilon|^{3/2}\|^2_{H^1}+\int_\Omega|\nabla w_\varepsilon||\Delta w_\varepsilon|^2\right)
 +C\|g\|_{L^{5/2}}^{2} \||\nabla w_\varepsilon|^{3/2}\|^{4/15} 
 \||\nabla w_\varepsilon|^{3/2}\|_{H^1}^{2/5}
 \nonumber\\
 &&\le\delta\left(\||\nabla w_\varepsilon|^{3/2}\|^2_{H^1}+\int_\Omega|\nabla w_\varepsilon||\Delta w_\varepsilon|^2\right)
 +C\|g\|_{L^{5/2}}^{5/2} (\||\nabla w_\varepsilon|^{3/2}\|^2)^{1/6} ,
  \label{lem-C-5-1}
 \end{eqnarray}
  for any $\delta>0$.
Reasoning in a similar way, and using the inequalities
$$
\|a w_\varepsilon\|_{L^{5/2}}
\le \|a\|_{L^{5/2+}}\|w_\varepsilon\|_{L^{\infty-}}
\le C \|a\|_{L^{5/2+}}\|w_\varepsilon\|_{W^{1,3}}
$$
and $$
\|w_\varepsilon\|_{W^{1,3}}^3
=
\||\nabla w_\varepsilon|^{3/2}\|^{2}+\| |w_\varepsilon|^{3/2}\|^{2},
$$
 it is possible to obtain
 \hspace{-0.3cm}\begin{eqnarray}
&&\int_\Omega|a w_\varepsilon\nabla\cdot(|\nabla w_\varepsilon|\nabla w_\varepsilon)|
 \nonumber\\
&&\ \ \le \delta\left(\!\||\nabla w_\varepsilon|^{3/2}\|^2_{H^1}\!+\!\int_\Omega|\nabla w_\varepsilon||\Delta w_\varepsilon|^2\!\right) 
\!+\!C\|a\|^{5/2}_{L^{5/2+}}
\left(\!
\||\nabla w_\varepsilon|^{3/2}\|^2 \!+\! \||w_\varepsilon|^{3/2}\|^2
\!\right).
\label{lem-C-5}
 \end{eqnarray}

  \noindent\underline{\it Step 3:}  
 By adding \eqref{lem-C-3} and \eqref{lem-C-4}, considering \eqref{lem-C-5-1}-\eqref{lem-C-5}, and choosing $\delta>0$ small enough, there exists positive constants $C_1,C_2,C_3,C_4$ such that
 \begin{eqnarray}
&& \frac{d}{dt}\left(\||w_\varepsilon|^{3/2}\|^2+\||\nabla w_\varepsilon|^{3/2}\|^2\right)
 +C_1\left(\||w_\varepsilon|^{3/2}\|^2_{H^1}+\||\nabla w_\varepsilon|^{3/2}\|^2_{H^1}\right)\nonumber\\
&&
\ \ \le {C}_2\left(\|a\|^{5/2}_{L^{5/2}}+\|g\|_{L^{5/3}}^{5/3}
+
 \|a\|^{5/2}_{L^{5/2+}}
 \right)\||w_\varepsilon|^{3/2}\|^2
 +{C}_3\left(\|g\|^{5/3}_{L^{5/3}}+\|g\|^{5/2}_{L^{5/2}}\right)
 \nonumber\\
 &&
 \ \ +{C}_4\left(\|g\|^{5/2}_{L^{5/2}} + \|a\|^{5/2}_{L^{5/2+}} \right)
 \||\nabla w_\varepsilon|^{3/2}\|^2 .
\label{lem-C-6}
 \end{eqnarray}
 Then, applying the Gronwall Lemma in \eqref{lem-C-6}, since $w_\varepsilon^0$ is bounded in  $W^{1,3}(\Omega)$,  we deduce that the sequences $\{|w_\varepsilon|^{3/2}\}_{\varepsilon>0}$ and $\{|\nabla w_\varepsilon|^{3/2}\}_{\varepsilon>0}$ are bounded in $W_2$. 
 Passing to the limit as $\varepsilon$ goes to 0 we deduce that $w_\varepsilon$ converges to the (unique) solution $w$ of problem \eqref{lem-C-1} with  
 $|w|^{3/2}$ and $|\nabla w|^{3/2}\in W_2$.
 \end{proof}
 
 \begin{lemma}\label{corol-A}
Let 
$\Omega\subset\mathbb{R}^3$ be a bounded domain with boundary $\partial\Omega\in C^2$. 
Assume that 
$ a\in L^{5/2}(Q)$,
${\bf c}\in L^5(Q)^3$, 
$d\in L^5(Q)$, 
$g^0_U\in L^{5\alpha(p)/(2\alpha(p)+3)} (Q)$,
${\bf g}^1\in L^{5\alpha(p)/(\alpha(p)+3)}(Q)^3$, 
 $\beta_1\in L^{5/2}(L^{5/2+})$,  
 $\beta_2\in L^{5}(Q)$  and  $g_V\in L^{5/2}(Q)$.  
 Then the coupled linear parabolic system  
\begin{equation}\label{corol-A-1}
\left\{
\begin{array}{rcl}\partial_tU-\Delta U+aU+\nabla\cdot(U{\bf c})+\nabla\cdot(d\nabla V)&=&g^0_U-\nabla\cdot{\bf g}^1\ \mbox{ in } Q,\\
\partial_tV-\Delta V+\beta_1V+\beta_2U&=&g_V\ \mbox{ in } Q,\\
U(0)&=&0,\ V(0)=0\ \mbox{ in } \Omega,\\
(-\nabla U+U{\bf c}+d\nabla V +{\bf g}^1)\cdot{\bf n}&=&0\ \mbox{ on } (0,T)\times\partial\Omega,\\
\partial_{\bf n}V&=&0\ \mbox{ on } (0,T)\times\partial\Omega,
\end{array}
\right.
\end{equation}
has a unique weak solution $(U,V)\in X_u\times X_v$.

\end{lemma}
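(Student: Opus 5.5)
The plan is a Leray--Schauder fixed point, decoupling the system through the cross term $\nabla\cdot(d\nabla V)$. For a given $\overline U$ in a suitable space (say $\overline U\in L^{\beta(p)}(Q)$), I first solve the $V$-equation
\[
\partial_tV-\Delta V+\beta_1V=g_V-\beta_2\overline U,\qquad V(0)=0,\quad \partial_{\bf n}V=0 .
\]
Since $\beta_2\in L^5(Q)$ and $\overline U\in L^{\beta(p)}(Q)\subset L^5(Q)$, the right-hand side lies in $L^{5/2}(Q)$. The coefficient $\beta_1$ lies in $L^{5/2}(L^{5/2+})$, and the initial datum is zero (hence in $W^{6/5,5/2}$). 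Corollary \ref{lem-A} therefore gives a unique $V\in X_{5/2}$ with Neumann condition, i.e.\ $V\in X_v$, and in particular $\nabla V\in L^5(Q)$ by Lemma \ref{An}. Next I solve the $U$-equation with $V$ frozen:
\[
\partial_tU-\Delta U+aU+\nabla\cdot(U{\bf c})=g^0_U-\nabla\cdot({\bf g}^1+d\nabla V).
\]
Since $d\in L^5(Q)$ and $\nabla V\in L^5$, the product $d\nabla V\in L^{5/2}(Q)\subset L^{5\alpha(p)/(\alpha(p)+3)}(Q)$, because $5\alpha/(\alpha+3)\le 5/2$ whenever $\alpha\ge3$. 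Lemma \ref{lem-B} with $q=\alpha(p)$ then yields a unique $U$ with $|U|^{\alpha(p)/2}\in W_2$, hence $U\in L^\infty(L^{\alpha(p)})\cap L^{\beta(p)}(Q)$ and $\nabla U\in L^2(Q)$. Finally, $\partial_tU\in L^2((H^1)')$ is read off from the equation, so $U\in X_u$. This defines $\Phi(\overline U)=U$.

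Compactness comes from Aubin--Lions: $X_u$ is bounded in $L^\infty(L^{\alpha})$, with $\nabla U\in L^2$ and $\partial_tU\in L^2((H^1)')$, so it embeds compactly in $L^{s}(Q)$ for every $s<\beta(p)$. I would therefore run the fixed point in $L^{s}(Q)$ with $s$ slightly below $\beta(p)$. This works provided $\beta_2\overline U\in L^{5/2}$, which needs $s\ge 5$, and $s<\beta(p)$ is available at least when $p>2$. For $p\le2$ we have $\beta=5$ exactly, so I would instead use the improved bound from Corollary \ref{lem-A}: a right-hand side in $L^{5/2-}$ still gives $\nabla V\in L^{5-}$. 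That introduces a small loss in $d\nabla V$, which must be absorbed by the margin in $5\alpha/(\alpha+3)=5/2$. This margin is tight, and handling it is the main technical obstacle. The fallback is a Banach contraction on a short time interval $[0,T_1]$, iterated to cover $[0,T]$; this avoids compactness altogether, since the system is linear.

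For the a priori bound on fixed points of $\lambda\Phi$, I would combine the estimates of Lemma \ref{lem-B}, tested with $|U|^{q-2}U$ for $q=\alpha(p)$, with the $X_{5/2}$ parabolic estimate for $V$. The coupling term gives a contribution $\|d\nabla V\|^{5q/(q+3)}$ in the $U$-inequality (\ref{lem-B-10}), while the $V$-estimate is controlled by $\|\beta_2 U\|_{L^{5/2}(0,t;L^{5/2})}\le\|\beta_2\|_{L^5}\|U\|_{L^5(0,t)}$. Because everything is linear, these combine into a Gronwall-type inequality in time for $y(t)=\||U|^{q/2}(t)\|^2$. To make it close, I would write the maximal-regularity bound for $V$ on $(0,t)$ and use absolute continuity of $\int_0^t\|\beta_2\|_{L^5}^5$ and $\int_0^t\|d\|_{L^5}^5$ to split $[0,T]$ into finitely many small subintervals on which the coupling constant is small enough to be absorbed. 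This time-splitting is where I expect the real difficulty, since maximal regularity does not give a pointwise-in-time differential inequality.

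Uniqueness follows from linearity: a solution with zero data satisfies the same estimate with zero right-hand side, so it vanishes on each subinterval in turn. Alternatively, one can argue by duality as in Theorem \ref{strong}, Step 2.
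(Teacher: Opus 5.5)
There is a concrete error in how you define $\Phi$: the inequality you use points the wrong way. The map $\alpha\mapsto 5\alpha/(\alpha+3)$ is increasing and equals $5/2$ at $\alpha=3$. For $p>2$, where $\alpha(p)=3(p-1)>3$, you get $5\alpha(p)/(\alpha(p)+3)=5(p-1)/p>5/2$. So $d\nabla V\in L^{5/2}(Q)$ does \emph{not} lie in the space that Lemma~\ref{lem-B} requires for $q=\alpha(p)$. This breaks both your definition of $\Phi$ and your a priori estimate at $q=\alpha(p)$ whenever $p>2$. The coupled estimate has to be done at $q=3$, where $5q/(q+3)=5/2$ matches $d\nabla V\in L^{5/2}$ exactly; this is the level at which the paper works. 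The upgrade to $q=\alpha(p)$ then needs more than $d\in L^5$. If $d\in L^{\beta(p)}(Q)$, then $d\nabla V\in L^{5(p-1)/p}(Q)$, which is exactly the required space. The paper's own final step makes the same implicit claim with only $d\in L^5$. It is harmless in the application of Proposition~\ref{v1}, where $d=-u^*\in X_u\subset L^{\beta(p)}(Q)$, but it does not follow from the hypotheses as stated.

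The second gap is that you leave the coupled a priori bound open, and that bound is the real content of the lemma. You correctly identify the obstruction: maximal regularity for $V$ yields no pointwise-in-time inequality. The paper avoids maximal regularity for $V$ altogether. It tests the $V$-equation with $|V|V-\nabla\cdot(|\nabla V|\nabla V)$, as in Lemma~\ref{lem-C} and the computation behind \eqref{lem-C-4}. This gives a differential inequality for $\|{\bf Y}\|^2$, with ${\bf Y}=(|U|^{3/2},|V|^{3/2},|\nabla V|^{3/2})$ and dissipation $\|{\bf Y}\|_{H^1}^2$. Then $\nabla V\in L^5(Q)$ follows from $|\nabla V|^{3/2}\in W_2\subset L^{10/3}(Q)$. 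Both coupling terms are bounded pointwise in time by $\delta\|{\bf Y}\|_{H^1}^2+C(\|d\|_{L^5}^5+\|\beta_2\|_{L^5}^5)\|{\bf Y}\|^2$, so a single Gronwall argument on $[0,T]$ closes the estimate.

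Your time-splitting idea can be made to work at $q=3$. On a subinterval $I$, space-time H\"older inequalities bound the coupling by $C\|d\|_{L^5(Q_I)}\|\beta_2\|_{L^5(Q_I)}E_I$ plus lower-order terms, where $E_I$ is the $W_2(I)$-energy of $|U|^{3/2}$. This term is absorbed on short intervals. Making this rigorous requires three things:
\begin{itemize}
\item maximal-regularity constants that do not depend on the subinterval;
\item restarting from $V(t_k)\in W^{6/5,5/2}(\Omega)$;
\item absorbing the $\beta_1V$ term, for instance via smallness of $\|\beta_1\|_{L^{5/2}(I;L^{5/2+})}$ together with $W^{6/5,5/2}\subset L^{\infty-}$.
\end{itemize}
None of this appears in your proposal. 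Once it is done, your contraction fallback on short intervals is enough by itself. The Leray--Schauder set-up, with its unresolved compactness problem at the exponent $5$, then becomes unnecessary.
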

\begin{proof}
By arguing again by regularization of data, the key will be find (formally) a priori estimates of 
$(|U|^{3/2},|V|^{3/2},|\nabla V|^{3/2})\in W_2\times W_2 \times W_2$.

By applying the proof of Lemma \ref{lem-B} to (\ref{corol-A-1})$_1$ for $w=U$ and ${\bf g}^1=d \nabla V$ (accounting in particular estimate \eqref{lem-B-8} for $q=3$)  and using the inequality 
$$\|d \nabla V\|_{L^{5/2}}
\le \|d\|_{L^{5}} \|\nabla V\|_{L^5}
= \|d\|_{L^{5}} \||\nabla V|^{3/2}\|_{L^{10/3}}^{2/3},
$$ 
one has
\begin{eqnarray}
&&\frac{d}{dt}\||U|^{ 3/2}\|^2 +
C\||U|^{3/2}\|^2_{H^1}
\nonumber\\
&&\ \ \le C\left(\| a\|^{5/2}_{L^{5/2}}+\|{\bf c}\|^5_{L^5}
+
 \|g^0_U\|_{L^{5/3}}^{5/3}
+
 \|{\bf g}^1\|^{5/2}_{L^{5/2}}
\right)
\||U|^{3/2}\|^2. 
 \label{estim-d-bis}
\\
&& \nonumber
\ \ +C\left(
\|g^0_U\|_{L^{5/3}}^{5/3}
+
\|{\bf g}^1\|^{5/2}_{L^{5/2}}
\right)
+
\|d\|_{L^{5}}
\||\nabla V|^{3/2}\|_{L^{10/3}}^{2/3}
\||U|^{3/2}\|_{L ^{10/3}}^{1/3} \||U|^{3/2}\|_{H^1}.
\label{corol-A-3-1-bis}
\end{eqnarray}
For simplicity, we introduce the vectorial variable 
${\bf Y}=(
|U|^{3/2},
|V|^{3/2},|\nabla V|^{3/2})$, hence the last term of \eqref{estim-d-bis} is bounded by 
\begin{equation} \label{estim-d}
C \|d\|_{L^{5}} 
\| {\bf Y} \|_{L ^{10/3}} \| {\bf Y} \|_{H^1}
\le C \|d\|_{L^{5}} \| {\bf Y} \|^{2/5} \| {\bf Y} \|_{H^1}^{8/5}
\le \delta  \| {\bf Y} \|_{H^1}^{2} + C \|d\|_{L^{5}}^5 \| {\bf Y} \|^{2}.
\end{equation}
On the other hand, testing $V$-equation by $|V|V-\nabla\cdot(|\nabla V|\nabla V)$ and arguing as in  the proof of Lemma \ref{lem-C} (see \eqref{lem-C-3} and \eqref{lem-C-5-1}
 for $w=V$ and $g=g_V-\beta_2 U$) we arrive at 

\begin{eqnarray}\label{corol-A-6}
&&\frac{d}{dt}\left(\||V|^{3/2}\|^2+\||\nabla V|^{3/2}\|^2\right)
+C\left(\||V|^{3/2}\|_{H^1}^2+\||\nabla V|^{3/2}\|^2_{H^1}\right)+C\int_\Omega|\nabla V||D^2 V|^2\nonumber\\
&&\ \ \le C\left(\| a\|^{5/2}_{L^{5/2}}+\|g_V\|^{5/3}_{L^{5/3}}
+\| \beta_1\|^{5/2}_{L^{5/2+}}
\right)\||V|^{3/2}\|^2
+C\left(\|\beta_1\|^{5/2}_{L^{5/2+}}+\|g_V\|^{5/2}_{L^{5/2}}
\right)\||\nabla V|^{3/2}\|^2\nonumber\\
&&\  \ +C\left(\|g_V\|^{5/3}_{L^{5/3}}+\|g_V\|^{5/2}_{L^{5/2}}\right)
+\int_\Omega |\beta_2|\, |U|\,|V|\,|V|+\int_\Omega|\beta_2U\nabla\cdot(|\nabla V|\nabla V)|.
\end{eqnarray}

We bound the last two terms as follows:
 \begin{eqnarray}
\int_\Omega|\beta_2|\, |U|\,|V|\,|V|
&\le&\int_\Omega |\beta_2| (|U|^{3/2})^{2/3} (|V| ^{3/2})^{4/3}
\le  \Vert \beta_2 \Vert_{L^{5/2}} \|{\bf Y}\|_{L^{10/3}}^2
\nonumber\\
&\le& C \Vert \beta_2 \Vert_{L^{5/2}} \|{\bf Y}\|^{4/5}  \|{\bf Y}\|_{H^1}^{6/5}
\le  \delta  \|{\bf Y}\|_{H^1}^{2} + C \Vert \beta_2 \Vert_{L^{5/2}}^{5/2} \|{\bf Y}\|^{2}.
\label{corol-A-8}
\end{eqnarray}
Arguing as in \eqref{lem-C-5-1} and using the bound
$$\|\beta_2U\|_{L^{5/2}}
\le \|\beta_2\|_{L^{5}} \| U\|_{L^5}
= \|\beta_2\|_{L^{5}} \| |U|^{3/2} \|_{L^{10/3}}^{2/3}
\le C \|\beta_2\|_{L^{5}} \| |U|^{3/2} \|^{4/15}  \| |U|^{3/2} \|_{H^1}^{2/5},
$$
  we have 
\begin{eqnarray}
\int_\Omega| \beta_2U\nabla\cdot(|\nabla V|\nabla V)|
&\le& \delta\left(\int_\Omega |\nabla V||\Delta V|^2+\||\nabla V|^{3/2}\|^2_{H^1}+\||U|^{3/2}\|^2_{H^1}\right) 
\nonumber\\
&&\ \ +C\|\beta_2\|_{L^{5}}^{5/2} \| |U|^{3/2} \|^{2/3}  \| |U|^{3/2} \|_{H^1} \||\nabla V|^{3/2}\|^{1/3}  
\nonumber\\
&&\le \delta\left(\int_\Omega |\nabla V||\Delta V|^2+\||\nabla V|^{3/2}\|^2_{H^1}+\||U|^{3/2}\|^2_{H^1}\right)\nonumber\\ 
&&\ \ +C\|\beta_2\|_{L^{5}}^{5} \|{\bf Y} \|^{2}. 
\label{corol-A-9}
\end{eqnarray}

Therefore, adding \eqref{estim-d-bis} and \eqref{corol-A-6} and taking into account \eqref{estim-d} and \eqref{corol-A-8}-\eqref{corol-A-9} we obtain the inequality 
\begin{equation}
\frac{d}{dt}\|{\bf Y} \|^2 + C\|{\bf Y} \|^2_{H^1}
\le k(t) \|{\bf Y} \|^2
+C\left( \|g^0_U\|_{L^{5/3}}^{5/3} +
\|{\bf g}^1\|^{5/2}_{L^{5/2}} \right),
\label{corol-A-10}
\end{equation}
with
$$
k(t)=
C\left(\| a\|^{5/2}_{L^{5/2}}+\|{\bf c}\|^5_{L^5}
+ \|g^0_U\|_{L^{5/3}}^{5/3}
+ \|{\bf g}^1\|^{5/2}_{L^{5/2}}
+\| d \|^5_{L^5}
+\|\beta_1\|^{5/2}_{L^{5/2+}}+\|g_V\|^{5/2}_{L^{5/2}}
+\|\beta_2\|^5_{L^5}
\right).
$$
Therefore, from \eqref{corol-A-10} and the Gronwall lemma we deduce that 
${\bf Y}=(|U|^{3/2},|V|^{3/2},|\nabla V|^{3/2})$ is bounded in $W_2\times W_2\times W_2$. Since in particular $\nabla V$ is bounded in $L^5(Q)$ and $d\in L^{5}(Q)$, then  $d \nabla V$ is bounded in $L^{5/2}(Q)$. Consequently, by applying again Lemma \ref{lem-B} (with $q=\alpha(p)$) to (\ref{corol-A-1})$_1$, $U$ is bounded in $L^\infty(L^{\alpha(p)})\cap L^{\beta(p)}(Q)$  and $\nabla U$ in $L^2(Q)$, hence $U\in X_u$. On the other hand, since $V(0)=0$ and $g_V\in L^{5/2}(Q)$, from parabolic regularity we deduce that $V\in X_{5/2}$.  
Finally, the uniqueness can be obtained following a classical comparison argument.
\end{proof}
\section{Proof of Estimate \eqref{lem-C-4}}

 Testing \eqref{lem-C-2}$_1$ by $-\nabla\cdot(|\nabla w_\varepsilon|\nabla w_\varepsilon)\in L^{5/2}(Q)$ we have
\begin{eqnarray}
&&\frac13\frac{d}{dt}\||\nabla w_\varepsilon|^{3/2}\|^2+\int_\Omega\Delta w_\varepsilon\nabla\cdot(|\nabla w_\varepsilon|\nabla w_\varepsilon)\nonumber\\
&&\ \ \ \ \ \le \int_\Omega|a_\varepsilon w_\varepsilon\nabla\cdot(|\nabla w_\varepsilon|\nabla w_\varepsilon)|
+\int_\Omega|g^0_\varepsilon\nabla\cdot(|\nabla w_\varepsilon|\nabla w_\varepsilon)|.\label{AP-1}
\end{eqnarray}
To prove inequality \eqref{lem-C-4} it is suffices to control the term involving $\Delta w_\varepsilon$. Indeed, using the Einstein summation convection we have
\begin{eqnarray*}
\int_\Omega\Delta w_\varepsilon\nabla\cdot(|\nabla w_\varepsilon|\nabla w_\varepsilon)
&=&\int_\Omega\partial^2_{ll}w_\varepsilon\partial_j(|\nabla w_\varepsilon|\partial_j w_\varepsilon)
=-\int_\Omega\partial_l(\partial_{lj}^2w_\varepsilon)(|\nabla w_\varepsilon|\partial_jw_\varepsilon)\\
&=&\int_\Omega\partial^2_{lj}w_\varepsilon\partial_l(|\nabla w_\varepsilon|\partial_jw_\varepsilon)
-\int_{\partial\Omega}\partial^2_{lj}w_\varepsilon|\nabla w_\varepsilon|\partial_jw_\varepsilon n_l.
\end{eqnarray*}
Notice that 
\begin{eqnarray*}
\partial_l(|\nabla w_\varepsilon|\partial_jw_\varepsilon)
&=&|\nabla w_\varepsilon|\partial^2_{jl}w_\varepsilon+|\nabla w_\varepsilon|^{-1}\nabla w_\varepsilon\partial_l(\nabla w_\varepsilon)\partial_jw_\varepsilon,\\
\partial_l(|\nabla w_\varepsilon|^{3/2})&=&\frac32|\nabla w_\varepsilon|^{-1/2}\nabla w_\varepsilon\cdot\partial_l(\nabla w_\varepsilon).
\end{eqnarray*}
Thus,
\begin{eqnarray*}
\partial^2_{lj}w_\varepsilon\partial_l(|\nabla w_\varepsilon|\partial_jw_\varepsilon)
&=&|\nabla w_\varepsilon||D^2w_\varepsilon|^2+|\nabla w_\varepsilon|^{-1/2}\nabla w_\varepsilon\cdot\partial_l(\nabla w_\varepsilon)|\nabla w_\varepsilon|^{-1/2}\partial_jw_\varepsilon\partial_l(\partial_jw_\varepsilon)\\
&=&|\nabla w_\varepsilon||D^2w_\varepsilon|^2+\frac49|\nabla(|\nabla w_\varepsilon|^{3/2})|^2.
\end{eqnarray*}
Consequently,
\begin{eqnarray}
\int_\Omega \Delta w_\varepsilon\nabla\cdot(|\nabla w_\varepsilon|\nabla w_\varepsilon)
&=&\int_\Omega|\nabla w_\varepsilon||D^2w_\varepsilon|^2+\frac49\|\nabla(|\nabla w_\varepsilon|^{3/2})\|^2\nonumber\\
&&-\int_{\partial\Omega}\partial^2_{lj}w_\varepsilon|\nabla w_\varepsilon|\partial_jw_\varepsilon n_l.\label{AP-2}
\end{eqnarray} 
For the analysis of the boundary term given in \eqref{AP-2}, we will use a technical result proved in \cite[Lemma 4.10]{Mizoguchi}, which establishes that if $z\in C^2(\Omega)$ with $\partial_{\bf n}z|_{\partial\Omega}=0$, then $\partial_{\bf n}|\nabla z|^2\le \mathcal{K}|\nabla z|^2$, where $\mathcal{K}$ is an upper bound on the curvature of $\partial\Omega$. Thus, noting that $\partial^2_{lj}w_\varepsilon\partial_jw_\varepsilon n_l=\frac12\partial_{\bf n}|\nabla w_\varepsilon|^2$, we have

\begin{eqnarray}
\int_\Omega\partial^2_{lj}w_\varepsilon|\nabla w_\varepsilon|\partial_jw_\varepsilon n_l
&=&\frac{\mathcal{K}}{2}\int_{\partial\Omega}|\nabla w_\varepsilon|^2=\frac{\mathcal{K}}{2}\||\nabla w_\varepsilon|^{3/2}\|^2_{L^2(\partial\Omega)}
\le C\||\nabla w_\varepsilon|^{3/2}\|^2_{H^{1/2+\varepsilon}}\nonumber\\
&\le&\delta\|\nabla(|\nabla w_\varepsilon|^{3/2})\|^2+C_\delta\||\nabla w_\varepsilon|^{3/2}\|^2,\label{AP-3}
\end{eqnarray}
for $\delta>0$ arbitrarily small. Therefore, from \eqref{AP-1}-\eqref{AP-3} we deduce estimate \eqref{lem-C-4}.

\section*{Acknowledgements}
F. Guill\'en-Gonz\'alez and M.A. Rodr\'iguez-Bellido have been partially supported by Grant  I+D+I PID2023-149182NB-I00 funded by MICIU/AEI/10.13039/501100011033 and, 
ERDF/EU.  
E. Mallea-Zepeda has been supported by ANID-Chile through of project research
Fondecyt de Iniciaci\'on 11200208. E.J. Villamizar-Roa has been supported by Vicerrector\'ia de Investigaci\'on y Extensi\'on of the Universidad Industrial de Santander, Project 4210.

\end{document}